\newcommand{\blue}[1]{\textcolor{blue}{#1}}
\newcommand{\red}[1]{\textcolor{red}{#1}}
\date{\today}
\theoremstyle{plain}
\newtheorem{theorem}{Theorem}[section]
\newtheorem{lemma}[theorem]{Lemma}
\newtheorem{proposition}[theorem]{Proposition}
\newtheorem{conjecture}[theorem]{Conjecture}
\theoremstyle{definition}
\newtheorem{example}[theorem]{Example} 
\newtheorem{remark}[theorem]{Remark} 
\newtheorem{definition}[theorem]{Definition} 
\theoremstyle{remark}
\newtheorem*{associativity}{Associativity}
\newtheorem*{compatibility}{Compatibility}
\def\id{\mathrm{id}}
\def\Id{{\bf 1}}
\let\map=\xrightarrow
\newcommand{\xyinc}{\ar@{^{(}->}}
\def\field{\Bbbk}
\newcommand{\rL}{{\mathbf l}}
\newcommand{\rG}{{\mathbf g}}
\newcommand{\bL}{\mathbf L}
\newcommand{\bH}{\mathbf H}
\newcommand{\bh}{\mathbf h}
\newcommand{\bPi}{\mathbf \Pi}
\newcommand{\bG}{\mathbf{G}}
\newcommand{\Kc}{\mathcal{K}}
\newcommand{\Kcb}{\overline{\Kc}}
\newcommand{\qand}{\quad\text{and}\quad}
\newcommand{\qqand}{\qquad\text{and}\qquad}
 \author{Carolina Benedetti}\address[Benedetti]
{Department of Mathematics and Statistics\\ York  University\\ To\-ron\-to, Ontario M3J 1P3\\ CANADA}
\author{Nantel Bergeron}\address[Bergeron]
{Department of Mathematics and Statistics\\ York  University\\ To\-ron\-to, Ontario M3J 1P3\\ CANADA}
\email{bergeron@mathstat.yorku.ca}
\urladdr{http://www.math.yorku.ca/bergeron}
\thanks{Bergeron supported in part by NSERC}
\title[{A}ntipode for linearized Hopf monoid]{{C}ancelation free formula for the antipode of linearized Hopf monoid}
\keywords{Antipode, Hopf monoid, Hopf algebra, combinatorial identities, coloring} 
\subjclass[2010]{16T30; 05E15; 16T05; 18D35}
\begin{document}

\begin{abstract}
Many combinatorial Hopf algebras $H$ in the literature are the functorial image of a linearized Hopf monoid $\bH$. That is, $H=\Kc (\bH)$ or $H=\Kcb (\bH)$. Unlike the functor $\Kcb$, the functor $\Kc$ applied to $\bH$ may not preserve the antipode of $\bH$. In this case, one needs to consider  the larger Hopf monoid $\bL\times\bH$ to get $H=\Kc (\bH)=\Kcb(\bL\times\bH)$ and study the antipode in $\bL\times\bH$. One of the main results in this paper provides a cancelation free and multiplicity free formula for the antipode of $\bL\times\bH$. From this formula we obtain a new antipode formula for $H$. 
We also explore the case when $\bH$ is commutative and cocommutative. In this situation we get new antipode formulas that despite of not being cancelation free, 
can be used to obtain one for $\Kcb(\bH)$ in some cases.
We recover as well many of the well-known cancelation free formulas in the literature. 
One of our formulas for computing the antipode in $\bH$ involves acyclic orientations of hypergraphs as the central tool.
In this vein, we obtain polynomials analogous to the chromatic polynomial of a graph, and also  identities parallel to Stanley's (-1)-color theorem.
One of our examples introduces a \emph{chromatic} polynomial for permutations which counts increasing sequences of the permutation satisfying a pattern. We also study the statistic obtained after evaluating such polynomial at $-1$. Finally, we sketch $q$ deformations and geometric interpretations of our results. This last part will appear in a sequel paper in joint work with J. Machacek.
\end{abstract}

\maketitle

\section*{Introduction}

Computing antipode formulas in various graded Hopf algebras is a classical yet difficult problem to solve. Recently, numerous results in this direction have been provided for various families of Hopf algebras \cite{Humpert-Martin, Benedetti-Sagan, Bergeron-Ceballos, Aguiar-Ardila, Darij-Reiner, BakerJarvisBergeronThiem}.  A motivation to find such formulas lies in their potential geometric interpretation (see for example \cite{Aguiar-Ardila}), or in their use to derive information regarding combinatorial invariants of the discrete objects in play. One example of this is the Hopf algebra of graphs $\mathcal G$ (see, for instance~\cite{Humpert-Martin}). In~\cite{Humpert-Martin} the authors derive the antipode formula and use it to obtain the celebrated Stanley's $(-1)$-color theorem:  the chromatic polynomial of a graph evaluated at $-1$ is, up to a sign, the number of acyclic orientations of the graph. On the geometric side, a remarkable result in\cite{Aguiar-Ardila} shows that such antipode is encoded in the $f$-vector of the graphical zonotope corresponding to the given graph.

The general principle is that antipode formulas provide interesting identities for the combinatorial invariants.
One of the key results in the theory of \emph{Combinatorial Hopf algebras (CHAs)} gives us a canonical way of constructing combinatorial invariants with values in the space $QSym$ of quasisymmetric functions (see~\cite{ABS}).
Letting $H=\bigoplus_{n\ge 0} H_n$ be a CHA with \emph{character} $\zeta\colon H\to\field$
we have a unique Hopf morphism
  $$\Psi:H\to QSym$$
  such that $\zeta=\phi_1\circ\Psi$ where $\phi_1\big(f(x_1,x_2,\ldots)\big)=f(1,0,0,\ldots)$. Moreover, there is a Hopf
  morphism $\phi_t \colon QSym\to \field[t]$ given by 
   $\phi_t (M_a)={t \choose \ell(a)},$
   where $M_a$ is the monomial quasisymmetric function indexed by an integer composition $a=(a_1,a_2,\ldots,a_\ell)$ and $\ell(\alpha)=\ell$ is the length of the composition.
 This Hopf morphism has the property that 
   $$\phi_t\big(f(x_1,x_2,\ldots)\big) \Big|_{t=1}=\phi_1(f)\,.$$
In particular
   $$\phi_t\circ\Psi\Big|_{t=1}=\left( \phi_t\big|_{t=1}\right)\circ\Psi=\phi_1\circ\Psi=\zeta.$$
   
In the case when $H=\mathcal G$ one can define the character
  $$\zeta(G)=\begin{cases}
     1&\text{if $G$ is discrete graph,}\\
     0&\text{otherwise.}
  \end{cases}$$
which gives us, as shown in \cite[Example 4.5]{ABS} that $\chi_G(t)=\phi_t\circ \Psi(G)$ is the chromatic polynomial of $G$.
Stanley's $(-1)$-theorem can be obtained pointing out that
the antipode of $\field[t]$ is given by $S\big(p(t)\big)=p(-t)$. Hence
  $$\chi_G(-1)=S\circ\phi_t\circ \Psi(G)\Big|_{t=1} = \phi_t\circ \Psi\circ S(G)\Big|_{t=1}=\zeta\circ S(G)$$
Using the fact that the discrete graph has coefficient $(-1)^{n}a(G)$ in $S(G)$ (see~\cite{Aguiar-Ardila,Benedetti-Sagan,Humpert-Martin}), where $n$ is the number of vertices of $G$ and $a(G)$ counts the acyclic orientations in it, we see that $\chi_G(-1)=\zeta\circ S(G)=(-1)^{n}a(G)$.

Here, we  present a general framework that allows us to derive new formulas for the antipode of many of the graded Hopf algebras in the literature. 
To achieve this, we lift the structure of graded Hopf algebra to 
a Hopf monoid in Joyal's category of species.
Combinatorial objets which compose and decompose often give rise
to Hopf monoids. These objects are the subject of~\cite[Part~II]{AM:2010}.
The few basic notions and examples needed for our purposes are reviewed in
Section~\ref{s:hopf}, including the Hopf monoids of linear orders $\bL$, the notion of linearized Hopf monoid and the  Hadamard product.

The first goal of this paper is to construct a cancelation free and multiplicity free formula for
the antipode of the Hadamard product $\bL\times\bH$ where $\bH$ is a linearized Hopf monoid. 
This surprising result will be done in Section~\ref{s:antiLxH}. One interesting fact is that even if at the level of Hopf monoids the formula is cancelation free,
many cancelations may occur when applying $\Kcb(\bL\times\bH)$. Yet this gives us new formulas for antipodes and potentially new identities.
We discuss this in Section~\ref{s:KLxH}.

Our next task, in Section~\ref{s:antiH}, is to consider the antipode formula for commutative and cocommutative linearized Hopf monoid $\bH$.
This case is especially interesting as many of the Hopf monoids in combinatorics fall into this class.
One consequence of our analysis is that the most interesting case to consider is the Hopf monoid of hypergraphs $\bf HG$ as defined in Section~\ref{ss:hypergraph}.
The Hopf monoid $\bf HG$ contains all the information to compute antipodes for any other commutative and cocommutative linearized Hopf monoid $\bH$.
This is an interesting fact and we will show completely the relationship.
We give two antipode formulas for $\bH$. One derived in Section~\ref{ss:antiHwithLxH} from our work in Section~\ref{s:antiLxH} and one in Section~\ref{subsec:acyclic} related to orientations of hypergraphs. Applications of this computations are presented in Section~\ref{ss:anticom}.
In Section~\ref{ss:antiStan} we derive combinatorial identities using our antipode formulas.
In particular we introduce a chromatic polynomial for total orders (permutations) and show an analogous to Stanley's $(-1)$-theorem.

In future work, with J. Machacek, we will show a geometric interpretation for the antipode of $\bf HG$ as encoded by a hypergraphical nestohedron. This will done in the spirit of the work in \cite{Aguiar-Ardila}.
We will also investigate $q$-deformations of Hopf structures studied here, leading to a Shareshian-Wach generalization of chromatic quasisymmetric functions. A preview of this work is given in Section~\ref{s:Future}.

\section{Hopf monoids}\label{s:hopf}

We review basic notions on Hopf monoids and illustrate definitions with  three classical examples. We encourage the reader to see \cite{AM:2010} for a deeper study on this topic. We define the notion of {\sl linearized Hopf monoid} as given in \cite{AM:2010,MarbergLin}.

\subsection{Species and Hopf monoids}\label{ss:sp-hopf}
A \emph{vector species} $\bH$ is a collection of vector spaces $\bH[I]$,
one for each finite set $I$, equivariant with respect to bijections $I\cong J$.
A morphism of species $f:\bH\to\bf Q$ is a collection of linear maps
$f_I:\bH[I]\to{\bf Q}[I]$ which commute with bijections.

A \emph{set composition} of a finite set $I$ is a finite sequence $(A_1,\ldots,A_k)$
of disjoint subsets of $I$ whose union is $I$. In this situation, we write
$
(A_1,\ldots,A_k)\models I.
$

A \emph{Hopf monoid} consists of a vector species $\bH$
equipped with two collections $\mu$ and $\Delta$ of linear maps
\[
\bH[A_1]\otimes\bH[A_2] \map{\mu_{A_1,A_2}}\bH[I]
\qand
\bH[I]\map{\Delta_{A_1,A_2}} \bH[A_1]\otimes\bH[A_2]
\]
subject to a number of axioms, of which the main ones follow.

\begin{associativity}
For each set composition $(A_1, A_2, A_3)\models I$, the diagrams
\begin{gather}\label{e:assoc}
\begin{gathered}
\xymatrix@R+1pc@C+20pt{
\bH[A_1]\otimes\bH[A_2]\otimes\bH[A_3]\ar[r]^-{\id\otimes\mu_{A_1,A_2}}
\ar[d]_{\mu_{A_1,A_2}\otimes\id} & \bH[A_1]\otimes\bH[A_2\cup A_3]\ar[d]^{\mu_{A_1,A_2\cup A_3}} \\
\bH[A_1\cup A_2]\otimes \bH[A_3]\ar[r]_-{\mu_{A_1\cup A_2,A_3}} & \bH[I]
}
\end{gathered}
\\
\label{e:coassoc}
\begin{gathered}
\xymatrix@R+1pc@C+40pt{
\bH[I]\ar[r]^-{\Delta_{A_1\cup A_2,A_3}} \ar[d]_{\Delta_{A_1,A_2\cup A_3}}
& \bH[A_1\cup A_2]\otimes \bH[A_3]\ar[d]^{\Delta_{A_1,A_2}\otimes\id}
\\
\bH[A_1]\otimes\bH[A_2\cup A_3]\ar[r]_-{\id\otimes\Delta_{A_1,A_2}}
& \bH[A_1]\otimes\bH[A_2]\otimes\bH[A_3]
}
\end{gathered}
\end{gather}
commute.
\end{associativity}

\begin{compatibility}
Fix two set compositions $(A_1,A_2)$ and $(B_1,B_2)$ of $I$,
and consider the resulting pairwise intersections:
\[
P:=A_1\cap B_1,\ Q:=A_1\cap B_2,\ R:=A_2\cap B_1,\ T:=A_2\cap B_2,
\]
as illustrated below.
\begin{equation}\label{e:4sets}
\begin{gathered}
\begin{picture}(100,90)(20,0)
  \put(50,40){\oval(100,80)}
  \put(0,40){\dashbox{2}(100,0){}}
  \put(45,55){$A_1$}
  \put(45,15){$A_2$}
\end{picture}
\quad
\begin{picture}(100,90)(10,0)
  \put(50,40){\oval(100,80)}
  \put(50,0){\dashbox{2}(0,80){}}
  \put(20,35){$B_1$}
  \put(70,35){$B_2$}
\end{picture}
\quad
\begin{picture}(100,90)(0,0)
\put(50,40){\oval(100,80)}
  \put(0,40){\dashbox{2}(100,0){}}
  \put(50,0){\dashbox{2}(0,80){}}
  \put(20,55){$P$}
  \put(70,55){$Q$}
  \put(20,15){$R$}
  \put(70,15){$T$}
\end{picture}
\end{gathered}
\end{equation}

For any such pair of set compositions, the diagram
\begin{equation}\label{e:comp}
\begin{gathered}
\xymatrix@R+2pc@C-5pt{
\bH[P] \otimes \bH[Q] \otimes \bH[R] \otimes \bH[T] \ar[rr]^{\cong} & &
\bH[P] \otimes \bH[R] \otimes \bH[Q] \otimes \bH[T] \ar[d]^{\mu_{P,R}
\otimes \mu_{Q,T}}\\
\bH[A_1] \otimes \bH[A_2] \ar[r]_-{\mu_{A_1,A_2}}\ar[u]^{\Delta_{P,Q} \otimes
\Delta_{R,T}} & \bH[I] \ar[r]_-{\Delta_{B_1,B_2}} & \bH[B_1] \otimes
\bH[B_2]
}
\end{gathered}
\end{equation}
must commute. The top arrow stands for the map that interchanges the middle factors.
\end{compatibility}

In addition, the Hopf monoid $\bH$ is \emph{connected} if
 $\bH[\emptyset]=\field$ and the maps
\[
\xymatrix{
\bH[I]\otimes\bH[\emptyset]  \ar@<0.5ex>[r]^-{\mu_{I,\emptyset}} & 
\bH[I] \ar@<0.5ex>[l]^-{\Delta_{I,\emptyset}} 
}
\qqand
\xymatrix{
\bH[\emptyset]\otimes\bH[I]  \ar@<0.5ex>[r]^-{\mu_{\emptyset,I}} & 
\bH[I] \ar@<0.5ex>[l]^-{\Delta_{\emptyset,I}} 
}
\]
are the canonical identifications.

The collection $\mu$ is the \emph{product} and the collection $\Delta$
is the \emph{coproduct} of the Hopf monoid $\bH$. For any Hopf monoid $\bH$ the existence
of the antipode map $S\colon \bH\to\bH$ is guaranteed and it can be computed using Takeuchi's formula as follows. For any finite set $I$
\begin{equation}\label{eq:takeuchi}
 S_I = \sum_{k=1}^{|I|}\sum_{(A_1,\ldots,A_k)\models I} (-1)^{k} \mu_{A_1,\ldots,A_k} \Delta_{A_1,\ldots,A_k}
 = \sum_{A\models I} (-1)^{\ell(A)} \mu_A \Delta_A\,.
\end{equation}
Here, for $k=1$, we have $\mu_{A_1}=\Delta_{A_1}=\Id_I$ the identity map on $\bH[I]$, and for $k>1$,
 $$  \mu_{A_1,\ldots,A_k} = \mu_{A_1,I\setminus A_1}(\Id_{A_1}\otimes\mu_{A_2,\ldots,A_k})\qquad\text{and}\qquad
       \Delta_{A_1,\ldots,A_k} =(\Id_{A_1}\otimes\Delta_{A_2,\ldots,A_k}) \Delta_{A_1,I\setminus A_1}.
 $$

A Hopf monoid is (co)commutative if the left (right) diagram below commutes
for all set compositions $(A_1,A_2)\models I$.
\begin{equation}\label{e:comm}
\begin{gathered}
\xymatrix@C-15pt{
\bH[A_1]\otimes\bH[A_2] \ar[rr]^-{\tau_{A_1,A_2}} \ar[rd]_{\mu_{A_1,A_2}}  & & \bH[A_2]\otimes\bH[A_1] \ar[ld]^{\mu_{A_2,A_1}} \\
& \bH[I] &
}
\qquad
\xymatrix@C-15pt{
\bH[A_1]\otimes\bH[A_2] \ar[rr]^-{\tau_{A_1,A_2}}   & &\bH[A_2]\otimes\bH[A_1]  \\
& \bH[I] \ar[lu]^{\Delta_{A_1,A_2}} \ar[ru]_{\Delta_{A_2,A_1}} &
}
\end{gathered}
\end{equation}
The arrow $\tau_{A_1,A_2}$ stands for
the map that interchanges the factors.

A morphism of Hopf monoids $f:\bH\to{\bf Q}$ is a morphism of species that
commutes with $\mu$ and $\Delta$.

\subsection{The Hopf monoid of linear orders $\bL$}\label{ss:order}

For any finite set $I$ let
$\rL[I]$ be the set of all linear orders on $I$. For instance, if $I=\{a,b,c\}$,
\[
\rL[I]=\{abc,\,bac,\,acb,\,bca,\,cab,\,cba\}.
\]
The vector species $\bL$ is such that $\bL[I]:=\field\rL[I]$ is the vector space with basis $\rL[I]$. 

Let $(A_1,A_2)\models I$.
Given linear orders $\alpha_1,\alpha_2$ on $A_1,A_2$, respectively,
their concatenation $\alpha_1\cdot \alpha_2$ 
is a linear order on $I$. This is the linear order given by $\alpha_1$ followed by $\alpha_2$.
Given a linear order $\alpha$ on $I$ and $P\subseteq I$, the restriction 
 $\alpha|_P$ is the ordering in $P$ given by the order $\alpha$.
These operations give rise to maps
\begin{equation}\label{e:L}
\begin{aligned}
\rL[A_1]\times\rL[A_2] & \to \rL[I] &\qquad\qquad \rL[I]  & \to \rL[A_1]\times\rL[A_2]\\
(\alpha_1,\alpha_2) & \to \alpha_1\cdot\alpha_2
& \alpha  & \to (\alpha |_{A_1},\alpha |_{A_2}).
\end{aligned}
\end{equation}
Extending by linearity, we obtain linear maps
\[
 \mu_{A_1,A_2}:\bL[A_1]\otimes\bL[A_2] \to \bL[I]
 \qand
\Delta_{A_1,A_2}:\bL[I]\to \bL[A_1]\otimes\bL[A_2]
\]
which turn $\bL$ into a cocommutative but not commutative Hopf monoid.

\subsection{The Hopf monoid of set partitions $\bPi$}\label{ss:partition}

A \emph{partition} of a finite set $I$ is a collection $X$ of disjoint nonempty subsets whose union is $I$. The subsets are the \emph{blocks} of $X$.

Given a partition $X$ of $I$ and $P\subseteq I$, the restriction
$X|_P$ is the partition of $P$ whose blocks are 
the nonempty intersections of the blocks of $X$ with $P$.
Given $(A_1,A_2)\models I$.
and partitions $X_i$ of $A_i$, $i=1,2$, 
the union $X_1\cup X_2$ is the partition of $I$ 
whose blocks are the blocks of $X_1$ and the blocks of $X_2$.

Let ${\mathbf \pi}[I]$ denote the set of partitions of $I$ and $\bPi[I]=\field {\mathbf \pi}[I]$ the vector
space with basis ${\mathbf \pi}[I]$. 
A Hopf monoid structure on $\bPi$ is defined and studied in~\cite{AM:2010,BakerJarvisBergeronThiem,AguiarBergeronThiem,BergeronZabrocki}. Among its various linear bases, we are interested in the {\sl power-sum} basis 
on which the operations are as follows. 
The product
\[
\mu_{A_1,A_2}: \bPi[A_1] \otimes \bPi[A_2] \to \bPi[I] 
\]
is given by
\begin{equation}\label{e:prod-m}
\mu_{A_1,A_2}({X_1} \otimes {X_2}) = X_1\cup X_2.
\end{equation}
for $X_i\in {\mathbf \pi}[A_i]$ and extended linearly. The coproduct
\[
\Delta_{A_1,A_2}: \bPi[I]  \to \bPi[A_1]\otimes\bPi[A_2]
\]
is given by
\begin{equation}\label{e:coprod-m}
\Delta_{A_1,A_2}(X) = 
\begin{cases}
{X|_{A_1}} \otimes {X|_{A_2}}  & \text{if $A_1$ is the union of some blocks of $X$,} \\
0 & \text{otherwise,}
\end{cases}
\end{equation}
for $X\in {\mathbf \pi}[I]$ and extended linearly.
These operations turn the species $\bPi$ into a Hopf monoid that is both commutative and cocommutative.

\subsection{The Hopf monoid of simple graphs $\bf G$}\label{ss:graph}
A \emph{(simple) graph} $g$ on a finite set $I$ is a collection $E$ of subsets of $I$ of size 2.
The elements of $I$ are the \emph{vertices} of $g$.
There is an \emph{edge} between two vertices $i, j$ if $\{i,j\}\in E$.

 Given a graph $g$ on $I$ and $P\subseteq I$, the restriction
$g|_P$ is the graph on the vertex set $P$ whose edges are the edges of $g$ between elements of $P$.
Let $(A_1, A_2)\models I$.
Given graphs $g_i$ of $A_i$, $i=1,2$, 
their union is the graph $g_1\cup g_2$ of $I$ 
whose edges are those of $g_1$ and those of $g_2$.

Let $\rG[I]$ denote the set of graphs on $I$ and $\bG[I]=\field \rG[I]$ the vector
space with basis $\rG[I]$. 
A Hopf monoid structure on $\bG$ is defined from
\begin{equation}\label{e:G}
\begin{aligned}
\rG[A_1]\times\rG[A_2] & \to \rG[I] &\qquad\qquad \rG[I]  & \to \rG[A_1]\times\rG[A_2]\\
(g_1,g_2) & \to g_1\cup g_2
& g  & \to (g |_{A_1},g |_{A_2}).
\end{aligned}
\end{equation}
Extending by linearity, we obtain linear maps
\[
 \mu_{A_1,A_2}:\bG[A_1]\otimes\bG[A_2] \to \bG[I]
 \qand
\Delta_{A_1,A_2}:\bG[I]\to \bG[A_1]\otimes\bG[A_2]
\]
These operations turn the species $\bG$ into a Hopf monoid that is both commutative and cocommutative.

\subsection{The Hopf monoid of simple hypergraphs  $\bf HG$}\label{ss:hypergraph}

Let  $2^I$ denote the collection of subsets of $I$. Let $\bf HG[I]=\field {\bf hg}[I]$ be the space spanned by the basis $\bf hg[I]$ where
      $${\bf hg}[I]=\big\{h \subseteq 2^I\,:  U\in h\text{ implies }|U|\ge 2\big\}$$
An element $h\in{\bf hg}[I]$ is a \emph{hypergraph on $I$}.
For $(P,T)\models I$ and $h,k\in {\bf hg}[I]$, the multiplication is given by $\mu_{P,T}(h,k)=h\cup k$ and the comultiplication 
is given by $\Delta_{P,T}(h) = h |_P\otimes h |_T$ where $h |_P=\{U\in h : U\cap P = U\}$. Extending these definition linearly we have that $\bf HG$ is 
commutative and cocommutative Hopf monoid. 

%

\subsection{The Hadamard product}\label{ss:hadamard}

Given two species $\bH$ and $\bf Q$, their \emph{Hadamard product} is the species $\bH\times\bf Q$ defined by
\[
(\bH\times{\bf Q})[I] = \bH[I]\otimes{\bf Q}[I].
\]
If $\bH$ and $\bf Q$ are Hopf monoids, then so is $\bH\times\bf Q$, with the following operations. Let $(A_1, A_2)\models I$. The product is
\[
\xymatrix@C+12pt{
(\bH\times{\bf Q})[A_1]\otimes(\bH\times{\bf Q})[A_2] \ar@{=}[d] \ar@{-->}[rr] &&
 (\bH\times{\bf Q})[I] \ar@{=}[dd] \\
\bH[A_1]\otimes {\bf Q}[A_1]\otimes \bH[A_2]\otimes {\bf Q}[A_2]
\ar[d]_{\cong} && \\
\bH[A_1]\otimes \bH[A_2]\otimes {\bf Q}[A_1]\otimes {\bf Q}[A_2]
\ar[rr]_-{\mu_{A_1,A_2}^{\bf H}\otimes\mu_{A_1,A_2}^{\bf Q}} &&
\bH[I]\otimes{\bf Q}[I]
}
\]
The coproduct is defined similarly. If $\bH$ and ${\bf Q}$ are (co)commutative, then so is $\bH\times{\bf Q}$. 

\subsection{Linearized Hopf monoids}\label{ss:stronglin} A \emph{set species} $\bh$ is a collection of sets $\bh[I]$,
one for each finite set $I$, equivariant with respect to bijections $I\cong J$.
We say that $\bh$ is a basis for a Hopf monoid $\bH$ if for every finite set $I$ we have that $\bH[I]=\field \bh[I]$, the vector space with basis $\bh[I]$. We say that the monoid $\bH$ is \emph{linearized} in the basis $\bh$ if the product and coproduct maps have the following properties.
The product
\[
\mu_{A_1,A_2}: \bH[A_1] \otimes \bH[A_2] \to \bH[I] 
\]
is the linearization of a map
\begin{equation}\label{e:prod-lin}
\mu_{A_1,A_2}: \bh[A_1] \otimes \bh[A_2] \to \bh[I] 
\end{equation}
and the coproduct
\[
\Delta_{A_1,A_2}: \bH[I]  \to \bH[A_1]\otimes\bH[A_2]
\]
is the linearization of a map
\begin{equation}\label{e:coprod-lin}
\Delta_{A_1,A_2}: \bh[I]  \to (\bh[A_1]\otimes\bh[A_2])\cup\{0\}\,.
\end{equation}
From now on, we will use capital letters for vector species and lower case for set species.

The Hopf monoids $\bL$, $\bPi$, $\bG$ and $\bf HG$ are linearized in the bases $\bf l$, $\pi$, $\bf g$ and $\bf hg$ respectively. As remarked in \cite{MarbergLin}, many of the Hopf monoids in the literature are linearized in some basis. On the other hand, the Hopf monoid $\bL^{\star}$ is not linearized in $\bf l^{*}$, where $\bL^{\star}$ denotes the Hopf monoid \emph{dual} to $\bL$ (see \cite{AM:2010}).

\subsection{Functors $\Kc$ and $\Kcb$} \label{ss:K_Kbar} 
As describe in ~\cite{AM:2010}, there are some interesting functors from the category of species to the category of graded vector spaces. Let $[n]:=\{1,2,\ldots,n\}$ and assume throughout that $\text{char}(\field)=0$. Given a species $\bH$, the symmetric group $S_n$ acts on $\bH[n]$ by relabelling. Define the functors $\Kc$ and $\Kcb$ by
  $$ \Kc({\bf H}) = \bigoplus_{n\ge 0} {\bf H}[n]
  \qquad\qquad
    \Kcb({\bf H}) = \bigoplus_{n\ge 0} {\bf H}[n]_{S_n}
  $$
where
  $${\bf H}[n]_{S_n}={\bf H}[n]\Big/ \langle x-{\bf H}[\sigma](x) \mid \ \sigma\in S_n; \ x\in {\bf H}[n]\rangle$$
 denotes the quotient space of equivalence classes under the $S_n$ action. When $\bH$ is a Hopf monoid, 
  we can build a product and coproduct
on $ \Kc({\bf H})$ and $ \Kcb({\bf H})$ from those of $\bH$ together
with certain canonical transformations.
For example, one has that
\[
\Kcb(\bL)=\field[x]
\]
is the polynomial algebra on one generator, while $\Kc(\bL)$
is the Hopf algebra introduced by Patras and Reutenauer in~\cite{PR:2004}. In this case, the antipode map $S:\bL\rightarrow\bL$ is such that
for $\alpha=a_1\cdots a_n\in[n]$
   $$ S_{[n]}(\alpha)=(-1)^n a_n\cdots a_1.$$
  However, the antipode of the graded Hopf algebra $\Kc[\bL]$  is not given by the formula above (see Section~\ref{s:KLxH}). On the other hand, in the Hopf algebra $\Kcb[\bL]\cong \field[t]$, the  antipode is given by $S(t^n)=(-1)^nt^n$ and is functorial image of the map above. This is not accident: the functor $\Kc$ may not preserve the antipode but the functor $\Kcb$ always does.


A very interesting relation between the functors $\Kc$ and $\Kcb$ is given in~\cite[Theorem~15.13]{AM:2010} as follows
\begin{equation}\label{eq:KbarK}
 \Kcb(\bL\times\bH) \cong \Kc(\bH).
\end{equation}
where $\bH$ is an arbitrary Hopf monoid. In this paper we aim to make use of this relation to study the antipode problem for some Hopf algebras.

\section{Antipode for linearized Hopf Monoid $\bL\times \bH$}\label{s:antiLxH}

In this section we show a multiplicity free and cancelation free formula for the antipode of Hopf monoids of the form $\bL\times \bH$ where $\bH$ is linearized in some basis. Thus, by (\ref{eq:KbarK}), we obtain an antipode formula for $\Kc(\bH)$ as well. However, in $\Kc(\bH)$ this antipode formula may not be cancellation free. 

\subsection{Antipode Formula for $\bL\times \bH$}\label{ss:UsingTakeushi}

Let $\bH$ be a Hopf monoid linearized in the basis $\bh$. We intend to resolve the cancelations in the Takeuchi formula for $\bL\times\bH$. For a fixed finite set $I$ let $(\alpha,x)\in ({\bf l}\times\bh)[I]$, that is, $\alpha$ is a linear ordering on $I$ and $x$ is an element of $\bh [I]$. From~\eqref{eq:takeuchi} we have
\begin{equation}\label{eq:takeuchi1}
 S_I(\alpha,x) =  \sum_{A\models I} (-1)^{\ell(A)} \mu_A \Delta_A(\alpha,x)
  = \sum_{A\models I \atop \Delta_A(x)\ne 0} (-1)^{\ell(A)} (\alpha_A,x_A),
\end{equation}
 summing over all $A=(A_1,\ldots,A_k)\models I$, where $\alpha_A$ denotes the element in ${\bf l} [I]$ given 
  $$\alpha_A= \alpha|_{A_1}\cdot  \alpha|_{A_2}\cdots \alpha_ \alpha|_{A_k}
$$ and if $\Delta_A(x)\ne 0$, then
  $$x_A=\mu_A\Delta_A(x) \in \bh[I].$$
Each composition $A$ gives rise to single elements $\alpha_A$ and $x_A$ since $\bL$ and $\bH$ are linearized in the basis $\bf l$ and $\bh$, respectively. 
We can thus rewrite equation~\eqref{eq:takeuchi1} as
\begin{equation}\label{eq:takeuchi2}
 S_I(\alpha,x) = \sum_{(\beta,y)\in ({\bf l}\times\bh)[I]}\left(\sum_{A\models I \atop (\alpha_A,x_A)=(\beta,y)} (-1)^{\ell(A)}\right) (\beta,y).
\end{equation}
Let 
 $${\mathcal C}_{\alpha,x}^{\beta,y}=\big\{A\models I : (\alpha_A,x_A)=(\beta,y)\big\}$$

The following theorem provides us a multiplicity-free and cancellation-free formula for the antipode of $\bL\times\bH$. Using the above notation we have
\begin{theorem}\label{thm:antiLxP} Let $\bH$ be a linearized Hopf monoid in the basis $\bf h$. For $(\alpha, x)\in({\bf l}\times\bh)[I]$ we obtain
 \begin{equation}\label{eq:thm1}
 S_I(\alpha,x) = \sum_{(\beta,y)\in ({\bf l}\times\bh)[I]} c_{\alpha,x}^{\beta,y} \,(\beta,y),
\end{equation}
where 
  $$c_{\alpha,x}^{\beta,y}=\sum_{A\in {\mathcal C}_{\alpha,x}^{\beta,y}} (-1)^{\ell(A)}=\pm1 \text{ or } 0.$$
\end{theorem}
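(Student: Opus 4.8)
The plan is to resolve the signs in \eqref{eq:takeuchi2} in two moves: first read off what the linear-order factor forces on the compositions $A$, and then use the compatibility axiom to \emph{decouple} the $\bh$-factor across the ascending runs that $\alpha$ carves out of $\beta$.

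\medskip
First I would record the description of $\mathcal{C}_{\alpha,x}^{\beta,y}$ coming from $\bL$ alone. Writing $\beta=b_1\cdots b_n$, a composition $A\models I$ satisfies $\alpha_A=\beta$ exactly when its blocks are the runs of $\beta$ determined by a set of gap positions $C\subseteq\{1,\dots,n-1\}$, subject to the requirement that each block read in $\alpha$-increasing order. The latter forces $C$ to contain every position $p$ with $b_p>_\alpha b_{p+1}$, while the remaining (``$\alpha$-ascending'') gaps may be cut freely. Thus, writing $D$ for this set of forced positions, the compositions with $\alpha_A=\beta$ are in bijection with cut-sets $C\supseteq D$, and $\ell(A)=|C|+1$, so that
$$c_{\alpha,x}^{\beta,y}=\sum_{\substack{C\supseteq D\\ \Delta_{A}(x)\ne 0,\ x_A=y}}(-1)^{|C|+1},$$
where $A=A(C)$ is the composition attached to $C$.

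\medskip
The crux is to control $x_A$ as $C$ varies. Let $A_0$ be the composition whose cut-set is exactly $D$; its blocks $R_1,\dots,R_m$ are the maximal $\alpha$-ascending runs of $\beta$. For any $C\supseteq D$, coassociativity and associativity factor $\Delta_A=\bigl(\bigotimes_j \Delta_{A|_{R_j}}\bigr)\Delta_{A_0}$ and $\mu_A=\mu_{A_0}\bigl(\bigotimes_j \mu_{A|_{R_j}}\bigr)$, where $A|_{R_j}$ is the composition induced on $R_j$. Writing $x_j$ for the $j$-th tensor factor of $\Delta_{A_0}(x)$, this yields $x_A=\mu_{A_0}\bigl(\bigotimes_j (x_j)_{A|_{R_j}}\bigr)$. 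The compatibility diagram \eqref{e:comp} with $(B_1,B_2)=(A_1,A_2)$ kills the cross factors, and with connectedness gives $\Delta_{A_0}\mu_{A_0}=\id$; applying $\Delta_{A_0}$ therefore recovers the tensor factors. Hence $x_A=y$ if and only if $\Delta_{A_0}(y)\ne 0$ and $(x_j)_{A|_{R_j}}=y_j$ for all $j$ (with $y_j$ the $j$-th factor of $\Delta_{A_0}(y)$), and the non-vanishing $\Delta_A(x)\ne0$ decouples run-by-run as well. Since $\ell(A)=\sum_j \ell(A|_{R_j})$, both the constraints and the sign factor over the runs, so $c_{\alpha,x}^{\beta,y}=\prod_{j=1}^m c_j$ (with $c=0$ unless $\Delta_{A_0}(x),\Delta_{A_0}(y)\ne0$), where $c_j=\sum_\gamma (-1)^{\ell(\gamma)}$ ranges over compositions $\gamma$ of $R_j$ with $(x_j)_\gamma=y_j$. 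As a sanity check, for the trivial factor (so $x_A=y$ is automatic) each $c_j$ is the binomial sum $\sum_{\gamma\models R_j}(-1)^{\ell(\gamma)}$, which vanishes unless $|R_j|=1$; this recovers $S_{\bL}(\alpha)=(-1)^n\bar\alpha$.

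\medskip
It then remains to prove the single-run statement $c_j\in\{0,\pm1\}$, after which the product $\prod_j c_j$ lies in $\{0,\pm1\}$ automatically. I expect this to be the main obstacle. Inside a run every internal gap is an $\alpha$-ascent, so all cut-patterns are order-admissible and only $(x_j)_\gamma=y_j$ remains; but the set of cut-patterns realizing a fixed value is \emph{not} a Boolean interval (already for graphs one meets fibers such as ``all nonempty subsets of $\{1,2\}$''), so no one-line binomial cancellation applies and the value-preserving merge of two adjacent blocks is not a local operation, since whether adjacent blocks may be merged depends on their extent and hence on neighbouring cuts. I would attack $c_j\in\{0,\pm1\}$ by using the linear order directly: the elements of $R_j$ whose mutual structure in $x_j$ must be severed to reach $y_j$ span \emph{intervals} of gap positions, so realizing $y_j$ amounts to hitting a staircase of such intervals while avoiding a forbidden set. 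Either a sign-reversing involution toggling a canonically chosen value-preserving cut—with a single fixed point, a canonical representative of $y_j$—or an induction on $|R_j|$ peeling off the last element via $\Delta_{R_j\setminus\{z\},\{z\}}$, should collapse the signed count to $(-1)^t$ for a staircase of $t$ intervals, and to $0$ whenever an unconstrained gap survives. The delicate point is making the toggle rule genuinely involutive in spite of this non-locality, which is precisely where the interval structure furnished by the linear order is indispensable.
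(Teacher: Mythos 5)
Your reduction to single $\alpha$-ascending runs is correct: the identification of $\{A : \alpha_A=\beta\}$ with cut-sets containing the forced descent positions, the factorizations $\Delta_A=\bigl(\bigotimes_j\Delta_{A|_{R_j}}\bigr)\Delta_{A_0}$ and $\mu_A=\mu_{A_0}\bigl(\bigotimes_j\mu_{A|_{R_j}}\bigr)$, and the use of $\Delta_{A_0}\mu_{A_0}=\id$ (compatibility plus connectedness) to decouple the condition $x_A=y$ run by run are all sound, and they do yield $c_{\alpha,x}^{\beta,y}=\prod_j c_j$ with the signs factoring correctly. This is close in spirit to Lemma~\ref{le:shortedge} of the paper, which factors the coefficient over the short edges of an auxiliary arc diagram. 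However, the theorem is not proved, because the single-run statement $c_j\in\{0,\pm1\}$ --- which you yourself flag as ``the main obstacle'' --- is where essentially all of the content lies, and your treatment of it is a plan rather than an argument.

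Concretely, three things are missing. First, the structural claim that the fiber $\{\gamma\models R_j:(x_j)_\gamma=y_j\}$ consists precisely of the cut-sets hitting a family of \emph{intervals} of gap positions is asserted, not proved; in the paper this is the combined content of Lemmas~\ref{le:min}, \ref{le:lowerideal} and \ref{le:upperideal} (existence of a unique minimal element $\Lambda$, the lower-ideal property of ${\mathcal C}_{\alpha,x}^{\beta,y}$, and the fact that minimal forbidden coarsenings merge a single consecutive block of parts of $\Lambda$), each of which requires a genuine computation with the compatibility axiom~\eqref{e:comp}, not just coassociativity. Second, the family of forbidden intervals must be shown to be \emph{non-nested} (condition (3) in the construction of $G_{\alpha,x}^{\beta,y}$ in Section~\ref{ss:involution}); this property is what ultimately forces the signed count into $\{0,\pm1\}$. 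Third, and most importantly, that signed count of hitting sets must actually be evaluated: your proposed ``toggle a canonically chosen value-preserving cut'' is in substance the $i$-merge/$i$-split involution of Section~\ref{ss:involution}, and making it genuinely involutive and then classifying its fixed points (Lemmas~\ref{le:involution}, \ref{le:nequal1} and \ref{le:fixshape}, culminating in the fact that there are at most two fixed points, necessarily of opposite parity) is precisely the delicate, non-local case analysis you defer. As it stands, the proposal establishes a correct and useful preliminary reduction but leaves the theorem's essential combinatorial core unproven.
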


The proof of this theorem will be given in Section~\ref{ss:proof1}. We make use of the refinement order on set compositions to show that the set ${\mathcal C}_{\alpha,x}^{\beta,y}$ has a unique minimum. We will use this fact along with other properties to construct sign reversing involutions on ${\mathcal C}_{\alpha,x}^{\beta,y}$ and the result will follow once we understand the fixed points of such involutions.

\subsection{Minimal element of ${\mathcal C}_{\alpha,x}^{\beta,y}$}\label{ss:refinementonC}
Given $A=(A_1,\ldots,A_k)$ and $B=(B_1,\ldots,B_\ell)$ set compositions on a set $I$, 
we say that $A$ \emph{refines} $B$, and we write $A\le B$, if the parts of $B$ are union of consecutive parts of $A$. For example
 $$A = \big(\{1,4\},\{2\},\{5,7\},\{3\},\{9\},\{6,8\}\big) \le \big(\{1,4\},\{2,3,5,7\},\{6,8,9\}\big)=B$$
but $A$ does not refine $\big(\{1,4,5,7\},\{2\},\{3\},\{9\},\{6,8\}\big)$. Denote by $(\mathcal P_I,\leq)$ the poset of set compositions of $I$, ordered by refinement.
In what follows we will write $(14,2,57,3,9,68)$ instead of $\big(\{1,4\},\{2\},\{5,7\},\{3\},\{9\},\{6,8\}\big)$. Consider the order $\le$ restricted to the set ${\mathcal C}_{\alpha,x}^{\beta,y}$.

\begin{lemma}\label{le:min}
If ${\mathcal C}_{\alpha,x}^{\beta,y}\ne\emptyset$, then there is a unique minimal element in $({\mathcal C}_{\alpha,x}^{\beta,y},\le)$.
\end{lemma}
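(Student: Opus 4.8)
The plan is to realize $\mathcal C_{\alpha,x}^{\beta,y}$ as a subset of an explicit Boolean lattice and to prove that this subset is closed under meet; since it is finite and, by hypothesis, nonempty, the meet of all its elements is then the required unique minimum (in a finite poset a unique minimal element is the same thing as a least element).

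First I would describe $\{A\models I:\alpha_A=\beta\}$ concretely. Writing $\beta=b_1\cdots b_n$, a set composition $A$ satisfies $\alpha_A=\beta$ exactly when every block of $A$ is a run $b_p b_{p+1}\cdots b_q$ of consecutive letters of $\beta$ that is increasing for $\alpha$. Such an $A$ is thus determined by its set of cut positions $\mathrm{cut}(A)\subseteq\{1,\dots,n-1\}$, and this set must contain every $\alpha$-descent of $\beta$ (a position $s$ with $b_{s+1}<_\alpha b_s$) while it may contain any $\alpha$-ascent. Hence $A\mapsto\mathrm{cut}(A)$ identifies $\{A:\alpha_A=\beta\}$ with an interval of a Boolean lattice $\mathcal B$: it is an order-reversing bijection onto the subsets of $\{1,\dots,n-1\}$ containing the descents, so that $A\le A'$ iff $\mathrm{cut}(A)\supseteq\mathrm{cut}(A')$ and $\mathrm{cut}(A\wedge A')=\mathrm{cut}(A)\cup\mathrm{cut}(A')$. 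Since $\mathcal C_{\alpha,x}^{\beta,y}\subseteq\mathcal B$, it suffices to show that $A,A'\in\mathcal C_{\alpha,x}^{\beta,y}$ implies $C:=A\wedge A'\in\mathcal C_{\alpha,x}^{\beta,y}$.

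Because $C\in\mathcal B$ we automatically have $\alpha_C=\beta$, so the real work is to prove $x_C=y$ together with $\Delta_C(x)\ne0$. I would phrase this through the operators $T_A:=\mu_A\Delta_A$, so that $x_A=T_A(x)$ whenever $\Delta_A(x)\ne0$. Two facts drive the argument. The first is idempotency: applying the compatibility axiom \eqref{e:comp} to a single split with $(B_1,B_2)=(A_1,A_2)$ gives $P=A_1$, $Q=R=\emptyset$, $T=A_2$, and together with connectedness this collapses to $\Delta_{A_1,A_2}\mu_{A_1,A_2}=\Id$; iterating along $A$ via associativity and coassociativity yields $\Delta_A\mu_A=\Id$, whence $T_A^2=\mu_A(\Delta_A\mu_A)\Delta_A=T_A$. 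Since $A,A'\in\mathcal C_{\alpha,x}^{\beta,y}$ give $y=T_A(x)=T_{A'}(x)$, idempotency yields $T_A(y)=T_AT_A(x)=y$.

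The second, and key, fact is the composition identity $T_A\,T_{A'}=T_{A\wedge A'}$ for $A,A'\in\mathcal B$. Granting it, $x_C=T_C(x)=T_A\big(T_{A'}(x)\big)=T_A(y)=y$, and because $y$ is a basis element this value is nonzero, which forces $\Delta_C(x)\ne0$; hence $C\in\mathcal C_{\alpha,x}^{\beta,y}$ and meet-closure follows. Conceptually the identity says that restricting $x$ to the blocks of $A'$ and merging, then doing the same along $A$, agrees with a single restriction-and-merge along the common refinement; for $\beta$-compatible $A,A'$ the intersections $A_i\cap A'_j$ are again $\alpha$-increasing $\beta$-runs, so the Tits product of $A$ and $A'$ is symmetric and coincides with $A\wedge A'$. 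I expect this identity to be the main obstacle: proving it from the axioms requires a careful iterated use of the compatibility square \eqref{e:comp} to interleave the four-fold splittings arising from $A$ and from $A'$ (equivalently, identifying $\{T_A\}$ with the action of the Tits monoid of set compositions), and it is precisely the linearization hypothesis that guarantees one is manipulating single basis elements at each stage rather than genuine linear combinations.
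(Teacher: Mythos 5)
Your argument is correct, but it reaches the conclusion by a genuinely different route than the paper. The paper argues by contradiction on two distinct minimal elements $A$ and $B$: it locates the first block where they differ, splits that single block of $A$ into $B_i$ and $U=A_i\setminus B_i$, and uses one application of associativity plus the compatibility square (with $Q=R=\emptyset$, and then with $P'=A_1\cup\cdots\cup A_{i-1}$, $R'=B_i$) to show the resulting strictly finer composition is still in ${\mathcal C}_{\alpha,x}^{\beta,y}$. You instead identify $\{A:\alpha_A=\beta\}$ with a Boolean interval of cut sets, prove ${\mathcal C}_{\alpha,x}^{\beta,y}$ is closed under meet via the identity $\mu_A\Delta_A\,\mu_{A'}\Delta_{A'}=\mu_{A\wedge A'}\Delta_{A\wedge A'}$ together with idempotency of $\mu_A\Delta_A$, and take the meet of the whole set. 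This buys you more: a least element rather than merely a unique minimal one, and the meet-closure statement essentially anticipates the paper's Lemma~\ref{le:lowerideal}. What it costs is that your key identity requires the full iterated compatibility expansion of $\Delta_{A'}\mu_A$ over all pairwise intersections $A_i\cap A'_j$ (i.e., the Tits-monoid action, including the observation that for $\beta$-compatible $A,A'$ the two Tits products coincide with $A\wedge A'$ and the middle shuffle is trivial), which you correctly identify as the crux but only sketch; the paper's computation is the special case of splitting one block into two, carried out in full. Your handling of the linearization subtlety --- deducing $\Delta_C(x)\ne 0$ from $T_C(x)=y\ne 0$ --- is sound. To be fully self-contained you would need to write out the iterated compatibility argument, but there is no gap in the idea.
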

\begin{proof} Suppose that  $A=(A_1,\ldots,A_k)$ and $B=(B_1,\ldots,B_\ell)$  are minimal in $\in {\mathcal C}_{\alpha,x}^{\beta,y}$ and $A\neq B$. We have that $\alpha_A=\alpha_B=\beta$ and $x_A=x_B=y$. Since $\alpha_A=\beta$ then the parts of $A$ appear consecutively in  $\beta$ and the same is true for the parts of $B$. For example if $\alpha=abcdef$ and $\beta=bcfade$, then for $A=(bc,f,ad,e)$ and $B=(bc,f,a,de)$ we have $\alpha_A=\alpha_B=\beta$.

 Let $1\le i\le k$ be the smallest index such that $A_i\ne B_i$, and assume without loss of generality that $|A_i|>|B_i|$. If $i=k$ then $B$ refines $A$ and this is a contradiction. Hence we assume that $i<k$ and  we now build a composition $C$ that refines $A$ such that $C\in {\mathcal C}_{\alpha,x}^{\beta,y}$, which will contradict again the minimality of $A$. Since $\alpha_A=\alpha_B$ our choice of $i$ implies that $B_i\subset A_i$.
Let $U=A_i\setminus B_i$. We claim that for $C=(A_1,\ldots,A_{i-1},B_i,U,A_{i+1},...,A_k)$
\begin{enumerate}
\item[(a)] $C<A$
\item[(b)] $\alpha_C=\alpha_A=\beta$
\item[(c)] $x_C=x_A=y$
\end{enumerate}
The items (a) and (b) are straightforward. Now we proceed to show (c) by showing that
\begin{equation}\label{eq:xCisxA}
x_A=\mu_{P,T}\Delta_{P,T}\mu_A\Delta_A(x)=\mu_C\Delta_C(x).
\end{equation}

Let $P=B_1\cup\cdots\cup B_i$ and $T=B_{i+1}\cup\cdots\cup B_\ell$.
We claim that
\begin{equation}\label{eq:Tcutx}
  x_B=\mu_{P,T}\Delta_{P,T}(x_B).
\end{equation}
To see this, we use associativity of $\mu$ to write $\mu_B=\mu_{P,T}(\mu_{B_1,\ldots,B_i}\otimes\mu_{B_{i+1},\ldots,B_\ell})$. Also, the compatibility relation~\eqref{e:comp} with $Q=R=\emptyset$ gives
  $$\Delta_{P,T}\mu_{P,T}=\Id_P\otimes \Id_T.$$
Hence equation~\eqref{eq:Tcutx} follows from
  $$
   \begin{aligned}
   \mu_{P,T}\Delta_{P,T}(x_B)&=\mu_{P,T}\Delta_{P,T}\mu_{P,T}(\mu_{B_1,\ldots,B_i}\otimes\mu_{B_{i+1},\ldots,B_\ell})\Delta_B(x)\\
      &=\mu_{P,T}(\mu_{B_1,\ldots,B_i}\otimes\mu_{B_{i+1},\ldots,B_\ell})\Delta_B(x)=\mu_B\Delta_B(x)=x_B.
   \end{aligned} 
  $$
Using again~\eqref{eq:Tcutx} and the fact that $x_A=x_B$ we show the first equality in \eqref{eq:xCisxA}:
\begin{equation}\label{eq:TcutS}
  x_A=\mu_{P,T}\Delta_{P,T}(x_A)=\mu_{P,T}\Delta_{P,T}\mu_A\Delta_A(x).
\end{equation}
 Now let $P'=A_1\cup\cdots\cup A_{i-1}$, $Q'=\emptyset$, $R'=B_i $ and $T'=T$ in the compatibility relation~\eqref{e:comp}:
  $$
  \begin{aligned}
  \Delta_{P,T}\mu_A &= \Delta_{P'\cup R',T'}\mu_{P',R'\cup T'}(\mu_{A_1,\ldots,A_{i-1}}\otimes \mu_{A_i,\ldots,A_k})\\
                                 &=(\mu_{P',R'}\otimes \Id_{T'})(\Id_{P'}\otimes\Delta_{R',T'})(\mu_{A_1,\ldots,A_{i-1}}\otimes \mu_{A_i,\ldots,A_k})\\
                                 &=(\mu_{P',R'}\otimes \Id_{T'})(\mu_{A_1,\ldots,A_{i-1}}\otimes\Id_{R'}\otimes\Id_{T'})
                                               (\Id_{A_{1}} \otimes\cdots\otimes \Id_{A_{i-1}}\otimes\Delta_{R',T'}\mu_{A_i,\ldots,A_k})\\
  \end{aligned}
  $$
  We now expand $\Delta_{R',T'}\mu_{A_i,\ldots,A_k}$ using similar manipulations. Let $T''=A_{i+1}\cup\cdots\cup A_k$,
    $$
  \begin{aligned}
  \Delta_{R',T'}\mu_{A_i,\ldots,A_k} &= \Delta_{B_i , U\cup T''}\mu_{B_i\cup U,T''}(\Id_{A_i}\otimes \mu_{A_{i+1},\ldots,A_k})\\
                                 &=(\Id_{B_i}\otimes \mu_{U,T''})(\Delta_{B_i,U}\otimes\Id_{T''})(\Id_{A_i}\otimes \mu_{A_{i+1},\ldots,A_k})\\
                                 &=(\Id_{B_i}\otimes \mu_{U,T''})(\Id_{B_i}\otimes \Id_U\otimes \mu_{A_{i+1},\ldots,A_k})(\Delta_{B_i,U}\otimes\Id_{A_{i+1}}
                                        \otimes\cdots\otimes \Id_{A_k})\\
                                 &=(\Id_{B_i}\otimes \mu_{U,A_{i+1},\ldots,A_k})(\Delta_{B_i,U}\otimes\Id_{A_{i+1}}  \otimes\cdots\otimes \Id_{A_k})\\
  \end{aligned}
  $$
Remark that since $R'=B_i$,
  $$
  \begin{aligned}
  \mu_C&=\mu_{P,T}(\mu_{A_1,\ldots,A_{i-1},B_i}\otimes \Id_{T'})(\Id_{A_{1}} \otimes\cdots\otimes \Id_{A_{i-1}}\otimes\Id_{B_i}\otimes \mu_{U,A_{i+1},\ldots,A_k})\\
  &=\mu_{P,T}(\mu_{P',R'}\otimes \Id_{T'})(\mu_{A_1,\ldots,A_{i-1}}\otimes\Id_{R'}\otimes\Id_{T'})(\Id_{A_{1}} \otimes\cdots\otimes \Id_{A_{i-1}}\otimes\Id_{B_i}\otimes \mu_{U,A_{i+1},\ldots,A_k})\\
  \end{aligned}
  $$
  and 
  $$\Delta_C=(\Id_{A_{1}} \otimes\cdots\otimes \Id_{A_{i-1}}\otimes\Delta_{B_i,U}\otimes\Id_{A_{i+1}}  \otimes\cdots\otimes \Id_{A_k})\Delta_A.$$
 Making use of the expressions given above for  $\Delta_{P,T}\mu_A$, and comparing with $\mu_C\Delta_C$ we get
  $$x_A=\mu_{P,T}(\Delta_{P,T}\mu_A)\Delta_A(x)=\mu_C\Delta_C(x)=x_C.$$
We conclude that the composition $C$ satisfies (a), (b) and (c) contradicting the choice of $A$, hence we must have a unique minimal element in $({\mathcal C}_{\alpha,x}^{\beta,y},\le)$.
\end{proof}

For the rest of this section, let $\alpha,\beta,x$ and $y$ be fixed and let $\Lambda=(\Lambda_1,\Lambda_2,\ldots,\Lambda_m)$ be the minimum of ${\mathcal C}_{\alpha,x}^{\beta,y}\ne \emptyset$.
For any $A\in{\mathcal C}_{\alpha,x}^{\beta,y}$ let $[\Lambda,A]$ denote the interval  $\big\{ B\models I : \Lambda\le B\le A\big\}\subseteq \mathcal P_I$. A priori, this interval does not need to be contained in ${\mathcal C}_{\alpha,x}^{\beta,y}$, but the following lemma shows that this is indeed the case.

\begin{lemma}\label{le:lowerideal}
If ${\mathcal C}_{\alpha,x}^{\beta,y}\ne\emptyset$, then for any $A\in{\mathcal C}_{\alpha,x}^{\beta,y}$ we have that 
$[\Lambda,A]\subseteq {\mathcal C}_{\alpha,x}^{\beta,y}$.
\end{lemma}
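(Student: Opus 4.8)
The plan is to verify separately the two coordinates, showing $\alpha_B=\beta$ and $x_B=y$ for every $B$ with $\Lambda\le B\le A$, where $\Lambda=(\Lambda_1,\dots,\Lambda_m)$ is the minimum of $\mathcal C_{\alpha,x}^{\beta,y}$ from Lemma~\ref{le:min} and $A\in\mathcal C_{\alpha,x}^{\beta,y}$. First I record that everything is well defined: since $\Lambda\le B$, coassociativity factors $\Delta_\Lambda=\bigl(\bigotimes_j\Delta_{\Lambda|_{B_j}}\bigr)\Delta_B$, so $\Delta_B(x)=0$ would force $\Delta_\Lambda(x)=0$, contradicting $\Lambda\in\mathcal C_{\alpha,x}^{\beta,y}$. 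Hence $\Delta_B(x)\ne0$, and because $\bH$ is linearized I may write $\Delta_B(x)=x|_{B_1}\otimes\cdots\otimes x|_{B_\ell}$, where $x|_P$ is the basis element determined by $\Delta_{P,I\setminus P}(x)=x|_P\otimes x|_{I\setminus P}$ (well defined and transitive by coassociativity). With this notation $x_B=\mu_B\Delta_B(x)$ makes sense.

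For the linear-order coordinate the argument is routine. The identity $\alpha_\Lambda=\beta$ forces the blocks of $\Lambda$ to be consecutive intervals of $\beta$, listed in $\beta$-order; since $\Lambda\le B$, each block of $B$ is a union of consecutive $\Lambda$-blocks, hence again a consecutive interval of $\beta$ in the correct order. Moreover $\alpha_A=\beta$ makes every block of $A$ \emph{$\alpha$-sorted} (its elements occur in $\alpha$ in their $\beta$-order), and since $B\le A$ each block of $B$ is a consecutive sub-interval of an $A$-block, hence also $\alpha$-sorted. Therefore $\alpha|_{B_j}$ equals the corresponding $\beta$-interval for each $j$, and concatenating gives $\alpha_B=\beta$.

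The content lies in the coordinate $x$, and I would isolate one Hopf-monoid input used repeatedly. The $Q=R=\emptyset$ instance of compatibility~\eqref{e:comp}, already used in Lemma~\ref{le:min}, together with connectedness gives $\Delta_{P,T}\mu_{P,T}=\id$ and, iterating, $\Delta_C\mu_C=\id$ for every set composition $C$. Writing $z_\lambda:=\mu_\lambda\Delta_\lambda(z)$ for $z\in\bh[W]$ and $\lambda\models W$, associativity and coassociativity then yield the regrouping formula: whenever $\rho\le\sigma$,
\[
z_\rho=\mu_\sigma\Bigl(\textstyle\bigotimes_m (z|_{\sigma_m})_{\rho|_{\sigma_m}}\Bigr),
\qquad
\Delta_\sigma(z_\rho)=\textstyle\bigotimes_m (z|_{\sigma_m})_{\rho|_{\sigma_m}},
\]
the second equality coming from $\Delta_\sigma\mu_\sigma=\id$. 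Applying this with $z=x$, $\rho=\Lambda$, $\sigma=A$ and using $x_\Lambda=x_A=y$, I apply $\Delta_A$ to get $\bigotimes_m (x|_{A_m})_{\Lambda|_{A_m}}=\Delta_A(x_\Lambda)=\Delta_A(x_A)=\bigotimes_m x|_{A_m}$; since these are tensors of basis elements I may compare factors and conclude $(x|_{A_m})_{\Lambda|_{A_m}}=x|_{A_m}$ for each $A$-block.

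It remains to propagate this equality from $\Lambda|_{A_m}$ up to $B|_{A_m}$. I would prove the small lemma that for $z\in\bh[W]$ and $\lambda\le\nu$ with $z_\lambda=z$ one has $z_\nu=z$: applying $\Delta_\nu$ to $z_\lambda=z$ and regrouping gives $\bigotimes_j (z|_{\nu_j})_{\lambda|_{\nu_j}}=\bigotimes_j z|_{\nu_j}$, whence factorwise $(z|_{\nu_j})_{\lambda|_{\nu_j}}=z|_{\nu_j}$ and $z_\nu=\mu_\nu\bigl(\bigotimes_j z|_{\nu_j}\bigr)=z_\lambda=z$. Using $\Lambda\le B\le A$ this lemma inside each block $A_m$ (with $z=x|_{A_m}$, $\lambda=\Lambda|_{A_m}\le\nu=B|_{A_m}$, hypothesis supplied by the previous paragraph) yields $(x|_{A_m})_{B|_{A_m}}=x|_{A_m}$. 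Reassembling via the regrouping formula with $\rho=B\le\sigma=A$ gives $x_B=\mu_A\bigl(\bigotimes_m (x|_{A_m})_{B|_{A_m}}\bigr)=\mu_A\bigl(\bigotimes_m x|_{A_m}\bigr)=x_A=y$, so $B\in\mathcal C_{\alpha,x}^{\beta,y}$. I expect the main obstacle to be purely the Hopf-algebraic bookkeeping: establishing $\Delta_C\mu_C=\id$ and the two regrouping identities cleanly from associativity, coassociativity and compatibility, and justifying the factorwise comparison of tensors, which is legitimate precisely because $\bh$ is a set species.
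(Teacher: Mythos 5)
Your proof is correct, but it is organized quite differently from the paper's. The paper proceeds by induction on $\ell(\Lambda)-\ell(A)$: it reduces to a single cover relation $B\lessdot A$ (so $A$ is obtained from $B$ by merging two adjacent parts), locates the cut point $P=\Lambda_1\cup\cdots\cup\Lambda_j$, and computes $x_B=\mu_{P,Q}\Delta_{P,Q}(x_A)=\mu_{P,Q}\Delta_{P,Q}(y)=y$, reusing verbatim the ``cut at a union of initial $\Lambda$-parts'' identities (equations~\eqref{eq:Tcutx}--\eqref{eq:TcutS}) from the proof of Lemma~\ref{le:min}; the order coordinate is dispatched by the same algebraic manipulation applied to $\bL$. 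You instead avoid the induction entirely: you compare $\Delta_A(x_\Lambda)$ with $\Delta_A(x_A)$ factorwise to localize the hypothesis to each block of $A$, prove the monotonicity statement that $z_\lambda=z$ and $\lambda\le\nu$ imply $z_\nu=z$, apply it inside each block with $\lambda=\Lambda|_{A_m}\le\nu=B|_{A_m}$, and reassemble; the order coordinate you handle by a direct combinatorial interval argument. Both proofs rest on the same Hopf-monoid inputs ($\Delta_{P,T}\mu_{P,T}=\id$ from compatibility with $Q=R=\emptyset$, plus (co)associativity regrouping), and your factorwise comparison of basis tensors is legitimate precisely because $\bH$ is linearized, as you note. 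What your version buys is a one-pass argument with two cleanly isolated, reusable identities (the regrouping formula and the monotonicity lemma), and an explicit record of where linearization enters; what the paper's version buys is economy, since it recycles the computations already set up for Lemma~\ref{le:min} and only ever has to handle one split at a time.
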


\begin{proof} Let $A=(A_1,A_2,\ldots,A_k)$. From Lemma~\ref{le:min} we know that $\Lambda\le A$. We proceed by induction on $r = \ell(\Lambda)-\ell(A)$.
If $r=0$, then we have that $A=\Lambda$ and the result follows.
Suppose the result holds for $r>0$ and let $A$ be such that $r+1= \ell(\Lambda)-\ell(A)$. Let $B\in \mathcal P_I$ such that $\Lambda\leq B< A$ with $\ell(B)-\ell(A)=1$. Hence there is a unique $i$ such that  $A=(B_1,\ldots,B_i\cup B_{i+1},B_{i+2},\ldots,B_{k+1})$. We aim to show that $B\in{\mathcal C}_{\alpha,x}^{\beta,y}$, and then by induction hypothesis $[\Lambda,B]\subseteq {\mathcal C}_{\alpha,x}^{\beta,y}$. 

Since $\Lambda\le B$, there is a unique $j$ such that 
 $$ B_1\cup\ldots\cup B_i=\Lambda_1\cup\cdots\cup \Lambda_j.$$
Let $P=\Lambda_1\cup\cdots\cup \Lambda_j$ and $Q=\Lambda_{j+1}\cup\cdots\cup \Lambda_m$. Arguing as in equations~\eqref{eq:Tcutx} and~\eqref{eq:TcutS} we have that 
  $$
  \begin{aligned}
   y&=\mu_\Lambda\Delta_\Lambda(x)=\mu_{P,Q}(\mu_{\Lambda_1,\ldots, \Lambda_j}\otimes \mu_{\Lambda_{j+1},\ldots, \Lambda_m})\Delta_\Lambda(x)\\
    &=\mu_{P,Q}\Delta_{P,Q}\mu_{P,Q}(\mu_{\Lambda_1,\ldots, \Lambda_j}\otimes \mu_{\Lambda_{j+1},\ldots, \Lambda_m})\Delta_\Lambda(x)
        =\mu_{P,Q}\Delta_{P,Q}(y)\\
    &=\mu_{P,Q}\Delta_{P,Q}\mu_A\Delta_A(x)= \mu_B\Delta_B(x)=x_B
   \end{aligned}
  $$
  The same argument shows that $\beta=\mu_{P,Q}\Delta_{P,Q}\mu_A\Delta_A(\alpha)= \mu_B\Delta_B(\alpha)=\alpha_B$. Hence $B\in {\mathcal C}_{\alpha,x}^{\beta,y}$. We can now appeal to the induction hypothesis and conclude that for each such $B$ the interval $[\Lambda,B]\subseteq {\mathcal C}_{\alpha,x}^{\beta,y}$ and thus the claim follows. Moreover, we conclude that ${\mathcal C}_{\alpha,x}^{\beta,y}$ is a lower ideal of the subposet
  $[\Lambda,(I)]=\{B\models I : \Lambda \le B\}$.
\end{proof}

\begin{lemma}\label{le:upperideal} For $ {\mathcal C}_{\alpha,x}^{\beta,y}\ne \emptyset$,
the minimal elements of $[\Lambda,(I)]\setminus {\mathcal C}_{\alpha,x}^{\beta,y}$ are all of the form
 $$(\Lambda_1,\ldots,\Lambda_{i-1},\Lambda_i\cup\Lambda_{i+1}\cup\cdots\cup \Lambda_j,\Lambda_{j+1},\ldots,\Lambda_m)$$
for some $1\le i<j\le m$.
\end{lemma}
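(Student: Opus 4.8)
The plan is to combine the two facts already established about ${\mathcal C}_{\alpha,x}^{\beta,y}$ with a description of the ambient interval. A coarsening $B$ of $\Lambda=(\Lambda_1,\ldots,\Lambda_m)$ is obtained by grouping consecutive blocks of $\Lambda$, so $B$ is determined by the set of the $m-1$ ``cuts'' of $\Lambda$ that it retains. Thus $[\Lambda,(I)]$ is isomorphic to the Boolean lattice on these $m-1$ cuts, where refining corresponds to retaining more cuts, and a \emph{single merge} $(\Lambda_1,\ldots,\Lambda_{i-1},\Lambda_i\cup\cdots\cup\Lambda_j,\Lambda_{j+1},\ldots,\Lambda_m)$ is exactly a coarsening whose collapsed cuts form one contiguous block. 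Since Lemma~\ref{le:lowerideal} shows that ${\mathcal C}_{\alpha,x}^{\beta,y}$ is a lower ideal of $[\Lambda,(I)]$, its complement is an up-set, and the claim is that every minimal element of this up-set collapses a single contiguous run of cuts. My strategy is to prove that membership in ${\mathcal C}_{\alpha,x}^{\beta,y}$ \emph{factors} across any retained cut, and then to deduce the shape of the minimal complement elements by a short order-theoretic argument.

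The key step I would isolate is the following factorization property. If $c$ is a cut of $\Lambda$ retained by $B$, so that $P=\Lambda_1\cup\cdots\cup\Lambda_c$ and $Q=I\setminus P$ are each unions of blocks of $B$, then $B\in{\mathcal C}_{\alpha,x}^{\beta,y}$ if and only if $B|_P\in\mathcal{C}^P$ and $B|_Q\in\mathcal{C}^Q$, where $\mathcal{C}^P=\mathcal{C}_{\alpha|_P,x|_P}^{\beta|_P,y|_P}$ uses the tensor factors of $\Delta_{P,Q}$ and similarly for $Q$. To prove this I would write $B=B|_P\cdot B|_Q$ and use associativity and coassociativity to split $\mu_B\Delta_B=\mu_{P,Q}\big((\mu_{B|_P}\Delta_{B|_P})\otimes(\mu_{B|_Q}\Delta_{B|_Q})\big)\Delta_{P,Q}$. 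Since $\bH$ and $\bL$ are linearized and $\Delta_\Lambda(x)\neq0$ forces $\Delta_{P,Q}(x)\neq0$ by coassociativity, this yields $x_B=\mu_{P,Q}(x_{B|_P}\otimes x_{B|_Q})$ with $x_{B|_P}=\mu_{B|_P}\Delta_{B|_P}(x|_P)$, and likewise $\alpha_B=\mu_{P,Q}(\alpha_{B|_P}\otimes\alpha_{B|_Q})$. The cut property $\mu_{P,Q}\Delta_{P,Q}(y)=y$, proved exactly as in~\eqref{eq:Tcutx} from $\Lambda\in{\mathcal C}_{\alpha,x}^{\beta,y}$, gives the matching $y=\mu_{P,Q}(y|_P\otimes y|_Q)$. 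The forward implication then follows by applying $\Delta_{P,Q}$ and using $\Delta_{P,Q}\mu_{P,Q}=\Id_P\otimes\Id_Q$ from the compatibility relation~\eqref{e:comp} with empty middle intersections, while the reverse implication is immediate by applying $\mu_{P,Q}$ to the two equalities on $P$ and $Q$.

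With the factorization in hand the remaining argument is purely combinatorial. Let $B$ be a minimal element of $[\Lambda,(I)]\setminus{\mathcal C}_{\alpha,x}^{\beta,y}$ and let $S$ be its set of collapsed cuts. If $S$ were not a single contiguous run, I would pick a retained cut $c$ separating one maximal run of $S$ from the remaining (nonempty) collapsed cuts, and split at $P=\Lambda_1\cup\cdots\cup\Lambda_c$; then both $P$ and $Q$ contain collapsed cuts of $S$. By the factorization, $B\notin{\mathcal C}_{\alpha,x}^{\beta,y}$ forces $B|_P\notin\mathcal{C}^P$ or $B|_Q\notin\mathcal{C}^Q$, say the former. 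Replacing the $Q$-side of $B$ by the fully refined $\Lambda|_Q$ produces $B^*$ with $B^*|_P=B|_P$ and $B^*|_Q=\Lambda|_Q$; here $B^*<B$ strictly, since the collapsed cuts on the $Q$-side are restored, and $\Lambda|_Q\in\mathcal{C}^Q$ because $\Lambda\in{\mathcal C}_{\alpha,x}^{\beta,y}$ together with the factorization applied to $\Lambda$. A second application of the factorization then gives $B^*\notin{\mathcal C}_{\alpha,x}^{\beta,y}$, contradicting the minimality of $B$. Hence $S$ is a single run and $B$ is a single merge of the asserted form.

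I expect the main obstacle to be the factorization property, and specifically the bookkeeping of nonvanishing: one must check that $\Delta_{P,Q}(x)\neq0$ and that the factors $x_{B|_P}$ and $x_{B|_Q}$ are genuine basis elements, so that both implications are exact rather than holding only up to support. These points are handled by the linearization hypothesis together with $\Lambda\in{\mathcal C}_{\alpha,x}^{\beta,y}$, in the same spirit as the computations in Lemmas~\ref{le:min} and~\ref{le:lowerideal}. Finally, I would stress that the statement only asserts that the minimal complement elements are single merges; not every single merge need be minimal, so no converse is required.
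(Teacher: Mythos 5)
Your proof is correct, and it is organized differently from the paper's. The paper splits into two cases according to which coordinate fails: when $\alpha_B\ne\beta$ it locates two adjacent blocks $\Lambda_i,\Lambda_{i+1}$ whose merging already breaks the order, so that the corresponding two-block merge lies below $B$ in the complement and minimality forces equality; when $x_B\ne y$ it assumes $B$ has two nontrivially merged parts, observes that each ``one part of $B$, rest refined to $\Lambda$'' composition $C_{(s)}$ strictly refines $B$ and hence lies in ${\mathcal C}_{\alpha,x}^{\beta,y}$ by minimality, extracts $x|_{B_s}=x|_{\Lambda_{a_s}}\cdots x|_{\Lambda_{b_s}}$ from $\Delta_{C_{(s)}}(y)$, and reassembles to get $x_B=x_\Lambda=y$, contradicting $B\notin{\mathcal C}_{\alpha,x}^{\beta,y}$. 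You instead isolate a single two-sided factorization lemma across any retained cut ($B\in{\mathcal C}_{\alpha,x}^{\beta,y}$ iff both restrictions lie in the corresponding restricted sets), proved by the same associativity/coassociativity/compatibility manipulations as in Lemmas~\ref{le:min} and~\ref{le:lowerideal} and Equation~\eqref{eq:Tcutx}, and then contradict minimality directly by replacing one side of $B$ with the fully refined $\Lambda$-restriction to produce a strictly smaller element outside ${\mathcal C}_{\alpha,x}^{\beta,y}$. Your route treats the $\alpha$- and $x$-coordinates uniformly and makes the Boolean-lattice structure of $[\Lambda,(I)]$ explicit, which is cleaner conceptually; the paper's route avoids stating the iff and gets the $\alpha$-case almost for free. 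Your attention to the nonvanishing bookkeeping ($\Delta_{P,Q}(x)\ne0$ by coassociativity from $\Delta_\Lambda(x)\ne0$, and exactness of the tensor-factor comparison from linearization) is exactly the point that needs care, and you handle it correctly.
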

\begin{proof}
For $ {\mathcal C}_{\alpha,x}^{\beta,y}\ne\emptyset$, let $B\in [\Lambda,(I)]$ be minimal such that $B\notin  {\mathcal C}_{\alpha,x}^{\beta,y}$.
That is $\alpha_B\ne \beta$ or $x_B\ne y$.  Let us first consider the case when $\alpha_B\ne \beta$. 
If $\alpha_B\ne \beta$ we must have at least one part of $B$ that contains $\Lambda_i\cup\Lambda_{i+1}$ where the largest entry of $\alpha|_{\Lambda_i}$, say $a$, is such that $a>_{\alpha}b$, where $b$ is the smallest entry of $\alpha|_{\Lambda_{i+1}}$. Hence, 
 $$\Lambda<B\le (\Lambda_1,\ldots,\Lambda_{i-1},\Lambda_i\cup\Lambda_{i+1},\Lambda_{i+2},\ldots,\Lambda_m),$$
hence $B= (\Lambda_1,\ldots,\Lambda_{i-1},\Lambda_i\cup\Lambda_{i+1},\Lambda_{i+2},\ldots,\Lambda_m)$. Thus the claim follows when $\alpha_B\neq\beta$.

We now consider the case $x_B\ne y$.
Assume that $B=(B_1,...,B_k)$ has at least two parts that are union of consecutive parts of $\Lambda$. Each part $B_s$ of $B$ is of the form $\Lambda_{a_s}\cup\cdots\cup\Lambda_{b_s}$, where $1\leq a_s\leq b_s\leq m$. For each $1\leq s\leq k$ consider the composition 
  $$ C_{(s)}=(\Lambda_1,\ldots,\Lambda_{a_s-1},B_s,\Lambda_{b_s+1},\ldots,\Lambda_m).
  $$
It follows that $C_{(s)}$ refines $B$ (strictly) as there are at least two parts in $B$ that are union of consecutive parts of $\Lambda$. Hence $C_{(s)}\in {\mathcal C}_{\alpha,x}^{\beta,y}$ by the minimality of $B$, and thus $x_{C_{(s)}}=y$ for all $1\le s\le k$. Hence,
  $$
  \begin{aligned}
 & x|_{\Lambda_1}\otimes\ldots\otimes x|_{\Lambda_{a_s-1}}\otimes x|_{B_s}\otimes x|_{\Lambda_{b_s+1}} 
         \otimes\ldots\otimes x|_{\Lambda_m} \\
         &= \Delta_{C_{(s)}}(x_{C_{(s)}}) = \Delta_{C_{(s)}}(y) \\
         &= x|_{\Lambda_1}\otimes\ldots\otimes x|_{\Lambda_{a_s-1}}\otimes(x|_{\Lambda_{a_s}}\cdots x|_{\Lambda_{b_s}})\otimes x|_{\Lambda_{b_s+1}}
         \otimes\ldots\otimes x|_{\Lambda_m}
  \end{aligned}
  $$
  which gives us
   $$ x|_{B_s} = x|_{\Lambda_{a_s}}\cdots x|_{\Lambda_{b_s}}$$
   for all $1\le s\le k$. But this implies that
  $$ x_B = x|_{B_1} \cdots x|_{B_k} =(x_{\Lambda_{a_1}}\cdots x_{\Lambda_{b_1}})\cdots (x_{\Lambda_{a_k}}\cdots x_{\Lambda_{b_k}})=x_\Lambda=y,$$
  a contradiction. Hence there is no more than one part of $B$ that is not a single part of $\Lambda$.
\end{proof}

\subsection{First Sign Reversing Involution on $c_{\alpha,x}^{\beta,y}$}\label{ss:involution}
Throughout this section, let $\alpha,\beta,x$ and $y$ be fixed. If ${\mathcal C}_{\alpha,x}^{\beta,y}\ne\emptyset$, then we know that the subposet $({\mathcal C}_{\alpha,x}^{\beta,y},\le)$ is a lower ideal  with a unique minimum $\Lambda=(\Lambda_1,\Lambda_2,\ldots,\Lambda_m)$. 
We  define a signed reversing involution on the set ${\mathcal C}_{\alpha,x}^{\beta,y}$ that will cancel most of the terms in
\begin{equation}\label{eq:anticoef}
  c_{\alpha,x}^{\beta,y}=\sum_{A\in {\mathcal C}_{\alpha,x}^{\beta,y}} (-1)^{\ell(A)}.
\end{equation}

Using lemma~\ref{le:upperideal} we define an oriented graph $G_{\alpha,x}^{\beta,y}$ on the vertex set $[m]$ as follows. 
We have an (oriented) edge $(a,b)\in G_{\alpha,x}^{\beta,y}$, or simply $ab$, for each minimal element of $[\Lambda,(I)]\backslash {\mathcal C}_{\alpha,x}^{\beta,y}$.
More precisely, $ab$ is an edge in $ G_{\alpha,x}^{\beta,y}$ if the following holds:
\begin{enumerate}
 \item[$(1)$] $1\le a<b\le m$
 \item[$(2)$] For $B_{ab}=(\Lambda_1,\ldots,\Lambda_{a-1},\Lambda_a\cup\Lambda_{a+1}\cdots\cup \Lambda_b,\Lambda_{b+1},\ldots,\Lambda_m)$ we have
  $ \alpha_{B_{ab}}\ne \beta$ or $x_{B_{ab}}\ne y.$
 \item[$(3)$] For any $a<r<b$, we have $ \alpha_{B_{ar}}= \beta=\alpha_{B_{rb}}$ and $x_{B_{ar}}= y=x_{B_{rb}}.$
\end{enumerate}
Condition (2), in particular guarantees that no element $A\in{\mathcal C}_{\alpha,x}^{\beta,y}$ induces an edge in $G_{\alpha,x}^{\beta,y}$. Condition (3) allows us to conclude that the graph $G$ is \emph{non-nested}: for any pair of edges $(a,b),(c,d)\in G$ such that $a\le c\le b$, it follows  $a<c\le b<d$

\begin{example}\label{ex:LxHgraph}
Consider the Hopf monoid of graphs $\bG$ as in Section~\ref{ss:graph}.  $\bG$ is linearized in the basis $\bf g$.
Let $I= \{a,b,c,d,e,f,g,h\}$; $x,y\in{\bf g}[I]$ be the graphs
$$
x=
\begin{tikzpicture}[scale=.7,baseline=.5cm]
	\node (a) at (0,0) {$\scriptstyle a$};
	\node (b) at (1,.1) {$\scriptstyle b$};
	\node (c) at (2,0) {$\scriptstyle c$};
	\node (d) at (-.5,1) {$\scriptstyle d$};
	\node (e) at (.6,1.2) {$\scriptstyle e$};
	\node (h) at (0,2) {$\scriptstyle h$};
	\node (g) at (1.5,2) {$\scriptstyle g$};
	\node (f) at (2.5,1.4) {$\scriptstyle f$};
	\draw (a) .. controls (1,-.5) ..   (c); 
	\draw (b) .. controls (1,1) ..   (g); 
	\draw (h) .. controls (1.5,3) ..   (f); 
	\draw (g) --  (e); 
	\draw (b) --  (e); 
	\draw (d) --  (b); 
\end{tikzpicture} 
\qquad \qquad
y=
\begin{tikzpicture}[scale=.7,baseline=.5cm]
	\node (a) at (0,0) {$\scriptstyle a$};
	\node (b) at (1,.1) {$\scriptstyle b$};
	\node (c) at (2,0) {$\scriptstyle c$};
	\node (d) at (-.5,1) {$\scriptstyle d$};
	\node (e) at (.6,1.2) {$\scriptstyle e$};
	\node (h) at (0,2) {$\scriptstyle h$};
	\node (g) at (1.5,2) {$\scriptstyle g$};
	\node (f) at (2.5,1.4) {$\scriptstyle f$};
	\draw (b) --  (e); 
	\draw (d) --  (b); 
\end{tikzpicture} 
$$
and $\alpha,\beta$ be the orders $\alpha=abcdefgh$ and $\beta=abdefghc$. The minimum element of ${\mathcal C}_{\alpha,x}^{\beta,y}$
is $\Lambda=(a,bde,f,g,h,c)$ (notice that indeed $x_{\Lambda}=y$ and $\alpha_{\Lambda}=\beta$). Since $\Lambda$ has $6$ parts, the graph $G_{\alpha,x}^{\beta,y}$ is build on the set $[6]$.
We have 
$$G_{\alpha,x}^{\beta,y}=
\begin{tikzpicture}[baseline=.2cm]
	\foreach \x in {1,2,3,4,5,6} 
		\node (\x) at (\x/2,0) [inner sep=-1pt] {$\bullet$};
	\node at (1/2,-.2) {$\scriptstyle 1$};
	\node at (2/2,-.2) {$\scriptstyle 2$};
	\node at (3/2,-.2) {$\scriptstyle 3$};
	\node at (4/2,-.2) {$\scriptstyle 4$};
	\node at (5/2,-.2) {$\scriptstyle 5$};
	\node at (6/2,-.2) {$\scriptstyle 6$};
	\draw (2) .. controls (2.5/2,.75) and (3.5/2,.75) .. (4); 
	\draw (3) .. controls (3.5/2,.75) and (4.5/2,.75) .. (5); 
	\draw  [densely dotted,thick]  (5) .. controls (5.25/2,.5) and (5.75/2,.5) .. (6); 
\end{tikzpicture} 
$$
In particular, notice that $12$ is not an edge as the element $B=(\Lambda_1\cup\Lambda_2,\Lambda_3,\dots,\Lambda_6)=(abde,f,g,h,c)\in {\mathcal C}_{\alpha,x}^{\beta,y}$.
The solid edges $(i,j)$ are drawn when the condition $ x_{B_{ij}}\ne y$ holds, the dotted edge $(5,6)$ indicates that  $ \alpha_{B_{56}}=abdefgch\ne \beta$. 
We now identify the set compositions in the interval $[\Lambda,(I)]$ with the set compositions of the interval  $[(1,2,\ldots,m),(12\cdots m)]$. 
We can represent ${\mathcal C}_{\alpha,x}^{\beta,y}$ as the following poset
$$
\begin{tikzpicture}[scale=1.6,baseline=.5cm]
	\node (0) at (0,0) {$\scriptscriptstyle 1,2,3,4,5,6$};
	\node (12) at (-2.5,.5) {$\scriptscriptstyle 12,3,4,5,6$};
	\node (23) at (-1,.5) {$\scriptscriptstyle 1,23,4,5,6$};
	\node (34) at (1,.5) {$\scriptscriptstyle 1,2,34,5,6$};
	\node (45) at (2.5,.5) {$\scriptscriptstyle 1,2,3,45,6$};
	\node (56) at (3.7,.5) {$\scriptscriptstyle \red{\underline{1,2,3,4,56}}$};
	\draw (0) -- (12);
	\draw (0) -- (23);
	\draw (0) -- (34);
	\draw (0) -- (45);
	\node (123) at (-3,1) {$\scriptscriptstyle 123,4,5,6$};
	\node (12_34) at (-1.5,1) {$\scriptscriptstyle 12,34,5,6$};
	\node (12_45) at (0,1) {$\scriptscriptstyle 12,3,45,6$};
	\node (234) at (3,1) {$\scriptscriptstyle \red{\underline{1,234,5,6}}$};
	\node (23_45) at (1.5,1) {$\scriptscriptstyle 1,23,45,6$};
	\node (345) at (4.5,1) {$\scriptscriptstyle \red{\underline{1,2,345,6}}$};
	\draw (12)--(123);
	\draw (12)--(12_34);
	\draw (12)--(12_45);
	\draw (23)--(123);
	\draw (23)--(23_45);
	\draw (34)--(12_34);
	\draw (45)--(23_45);
	\draw (45)--(12_45);
	\node (123_45) at (-1,1.5) {$\scriptscriptstyle 123,45,6$};
	\draw (23_45)-- (123_45);
	\draw (123)--(123_45);
	\draw (12_45)--(123_45);
\end{tikzpicture} 
$$
where the set compositions in red are the minimal compositions in $[\Lambda,(I)] \backslash {\mathcal C}_{\alpha,x}^{\beta,y}$, from Lemma~\ref{le:upperideal}.
\end{example}

\begin{remark}
As in the example above, for now on we will identify the set compositions in the interval $[\Lambda,(I)]$ with the set compositions of the interval  $[(1,2,\ldots,m),(12\cdots m)]$.
An element $A\in {\mathcal C}_{\alpha,x}^{\beta,y}$ is viewed as a set composition $A\models[m]$.
\end{remark}

For any set composition $B$ define its sign to be $sgn(B):=(-1)^{\ell (B)}$, where $\ell(B)$ is the length of $B$. We now define a sign reversing involution $\varphi\colon {\mathcal C}_{\alpha,x}^{\beta,y} \to {\mathcal C}_{\alpha,x}^{\beta,y}$, making use of auxiliary maps $\varphi _i$ for each $1\le i <m$, as follows.
Let $A=(A_1,\ldots,A_k)\in {\mathcal C}_{\alpha,x}^{\beta,y}$ and let $j$ be such that $i\in A_j$.

\medskip
\noindent {\bf $i$-Merge}: if $A_j=\{i\}$ and $(i,r)$ is not an edge of $G_{\alpha,x}^{\beta,y}$ for any $r\in A_{j+1}$, 
define
  $$ \varphi_i(A)=(A_1,\ldots,A_{j-1},\{i\}\cup A_{j+1},A_{j+2},\ldots, A_k).$$

\medskip
\noindent {\bf $i$-Split}: if $|A_j|>1$ and $i=\min(A_j)$ and  \big($j=1$ or $A_{j-1}\ne\{i-1\}$ or $(i-1,i)\in G_{\alpha,x}^{\beta,y}$\big), then
  $$ \varphi_i(A)=(A_1,\ldots,A_{j-1},\{i\},A_{j}\setminus\{i\},A_{j+1},\ldots, A_k).$$
  
 \medskip
\noindent {\bf $i$-Fix}: If we do not have an $i$-merge or an $i$-split, then
  $$\varphi_i(A)=A.$$
  
 Then the map $\varphi$ is defined as
 \begin{equation}\label{eq:involution}
   \varphi(A):=\begin{cases}
          A & \text{if $\varphi_i(A)=A$ for all $1\le i<m$,}\\
          &\\
          \varphi_{i_0}(A) & \text{for $i_0=\min\big\{i:\varphi_i(A)\ne A\big\}$, otherwise.}\\
       \end{cases}
 \end{equation}

\begin{lemma}\label{le:involution}
  $\varphi\colon {\mathcal C}_{\alpha,x}^{\beta,y} \to {\mathcal C}_{\alpha,x}^{\beta,y}$ is an involution.
\end{lemma}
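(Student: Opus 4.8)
The plan is to read each auxiliary map $\varphi_i$ as toggling a single \emph{cut} and then to show that the two operations it alternates between, an $i$-Merge and an $i$-Split, are mutually inverse while leaving untouched the behaviour of every $\varphi_{i'}$ with $i'<i$. After the identification of $[\Lambda,(I)]$ with set compositions of $[m]$, each part is an interval of $[m]$, so a composition $A$ is recorded by its set of cut positions $S_A\subseteq\{1,\dots,m-1\}$, where $i\in S_A$ means that $i$ and $i+1$ lie in different parts; in this language $A\in{\mathcal C}_{\alpha,x}^{\beta,y}$ exactly when $S_A$ meets the interval $\{a,\dots,b-1\}$ of every edge $(a,b)$ of $G_{\alpha,x}^{\beta,y}$. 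The map $\varphi_i$ is defined only when it deletes the cut $i$ (the $i$-Merge, when $\{i\}$ is a singleton part) or creates it (the $i$-Split, when $i$ is the minimum of a part of size $>1$); in either configuration there is a cut at $i-1$ whenever $i>1$, so $i$ begins its part. First I would record that $A':=\varphi_{i_0}(A)$, where $i_0=\min\{i:\varphi_i(A)\neq A\}$, again lies in ${\mathcal C}_{\alpha,x}^{\beta,y}$: in the split case $A'<A$ and membership follows since ${\mathcal C}_{\alpha,x}^{\beta,y}$ is a lower ideal of $[\Lambda,(I)]$ (Lemma~\ref{le:lowerideal}); in the merge case the enlarged part is $\{i_0\}\cup A_{j+1}$, and the merge hypothesis (no edge $(i_0,r)$ with $r\in A_{j+1}$) together with $A\in{\mathcal C}_{\alpha,x}^{\beta,y}$ shows that this part contains no edge interval, so $A'\in{\mathcal C}_{\alpha,x}^{\beta,y}$.

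The first key step is to check $\varphi_{i_0}(A')=A$, i.e.\ that merge and split undo each other \emph{with their side conditions}. If $\varphi_{i_0}$ was a merge, then in $A'$ the index $i_0$ is the minimum of a part of size $>1$, and one must verify the split clause ``$j=1$ or $A_{j-1}\neq\{i_0-1\}$ or $(i_0-1,i_0)\in G_{\alpha,x}^{\beta,y}$''; the only dangerous configuration, $A_{j-1}=\{i_0-1\}$ and $(i_0-1,i_0)\notin G_{\alpha,x}^{\beta,y}$, would make $\varphi_{i_0-1}$ an $(i_0-1)$-Merge on $A$, contradicting the minimality of $i_0$. Conversely, if $\varphi_{i_0}$ was a split, then in $A'$ the singleton $\{i_0\}$ is followed by $A_j\setminus\{i_0\}$, and the merge clause holds because $A\in{\mathcal C}_{\alpha,x}^{\beta,y}$ forbids any edge $(i_0,r)$ with $r$ inside the interval $A_j$. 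In both directions $\varphi_{i_0}$ toggles the same cut back and restores $A$.

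The second key step, which I expect to be the main obstacle, is to show that $i_0$ is still the smallest active index for $A'$, i.e.\ $\varphi_i(A')=A'$ for every $i<i_0$; this is what yields $\varphi(A')=\varphi_{i_0}(A')=A$. Since toggling the cut at $i_0$ alters only parts at positions $\ge i_0$ and $i_0$ begins its part in $A$, for $i<i_0-1$ all the local data seen by $\varphi_i$ (its part, the preceding part, and, in the merge case, the following part, which remains inside $\{1,\dots,i_0-1\}$) is unchanged, so $\varphi_i$ is a fix on $A'$ just as on $A$. The delicate boundary is $i=i_0-1$: here the part $P\ni i_0-1$ is unchanged but its successor passes from the block of $A$ at $i_0$ to that of $A'$. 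When $|P|>1$ the index $i_0-1$ is the maximum of $P$, so $\varphi_{i_0-1}$ is a fix on both compositions; when $P=\{i_0-1\}$ one must show the $(i_0-1)$-Merge still fails on $A'$, and this is exactly where the side conditions pay off. In the merge case the fixedness of $A$ at $i_0-1$ forces an edge $(i_0-1,i_0)\in G_{\alpha,x}^{\beta,y}$, which persists since $i_0$ stays in the successor block; in the split case the split's own clause already guaranteed $(i_0-1,i_0)\in G_{\alpha,x}^{\beta,y}$, and $i_0$ lies in the singleton successor $\{i_0\}$ of $A'$. In either subcase the merge into the successor is blocked, so $\varphi_{i_0-1}(A')=A'$. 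Combining the three steps gives $\varphi(\varphi(A))=\varphi_{i_0}(A')=A$ for non-fixed $A$, while fixed points are trivial, so $\varphi$ is an involution.
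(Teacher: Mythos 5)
Your proof is correct and follows essentially the same route as the paper's: establish that $\varphi_{i_0}(A)$ stays in ${\mathcal C}_{\alpha,x}^{\beta,y}$ (via the lower-ideal property for splits and the edge condition of Lemma~\ref{le:upperideal} for merges), check that the merge and split side conditions make the two operations mutually inverse at $i_0$, and verify that $i_0$ remains the minimal active index. The paper compresses the last two points into ``the minimality of $i_0$ guarantees\dots'', whereas you spell out the boundary case $i=i_0-1$ and the persistence of the edge $(i_0-1,i_0)$ explicitly; this is a welcome elaboration of the same argument, not a different one.
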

 
\begin{proof} Let $A\in  {\mathcal C}_{\alpha,x}^{\beta,y}$. If $\varphi(A)=A$ the claim follows. Assume then that $\varphi(A)=A'\ne A$. Let $i_0=\min\big\{i:\varphi_i(A)\ne A\big\}$, and thus $A'= \varphi_{i_0}(A)$. We first assume that $A'$ is obtained from $A$ with an $i_0$-split, then $A'<A$ and thus by lemma~\ref{le:lowerideal}, $A' \in {\mathcal C}_{\alpha,x}^{\beta,y}$. Moreover, applying an $i_0$-split to $A$ guarantees that an $(i_0-1)$-merge can not be applied to $A'$. The minimality of $i_0$ guarantees that $\varphi_i(A')=A'$ for all $i<i_0$ and $\varphi(A')=\varphi_{i_0}(A')=A$ is obtained from $A'$ by an $i_0$-merge as desired.

Now assume that $A'$ is obtained by an $i_0$-merge. This implies that no part of $A'$ contains (the vertices of) any edge of $G_{\alpha,x}^{\beta,y}$. Hence, $A' \in {\mathcal C}_{\alpha,x}^{\beta,y}$ by lemma~\ref{le:upperideal}. Again, the minimality of $i_0$ guarantees that $\varphi_i(A')=A'$ for all $i<i_0$ and $\varphi(A')=\varphi_{i_0}(A')=A$ is obtained from $A'$ by an $i_0$-split. Finally, notice that in either case, $sgn( \varphi(A))\neq sgn(A)$ whenever $ \varphi(A)\neq A$.
\end{proof}

\subsection{Proof of Theorem~\ref{thm:antiLxP}}\label{ss:proof1} Lemma \ref{le:involution} tells us that every element $A$ in the poset ${\mathcal C}_{\alpha,x}^{\beta,y}$ is either a fixed point, or is paired with a unique element $B\in{\mathcal C}_{\alpha,x}^{\beta,y}$ such that $B$ is a covering of $A$ or $A$ covers it. Thus, equation~\eqref{eq:anticoef} can be rewritten as:
\begin{equation}\label{eq:anticoeffix}
  c_{\alpha,x}^{\beta,y}=\sum_{A\in {\mathcal C}_{\alpha,x}^{\beta,y}  \atop \varphi(A)=A} (-1)^{\ell(A)}.
\end{equation}
This depends only on the structure of the graph $G_{\alpha,x}^{\beta,y}$, which as remarked earlier, is non-nested. In this section we let $G:=G_{\alpha,x}^{\beta,y}$ be a non-nesting graph on the vertices $\{1,2,\ldots, m\}$ and put ${\mathcal C}(G):= {\mathcal C}_{\alpha,x}^{\beta,y}$, $c(G) := c_{\alpha,x}^{\beta,y}$. Our next task is to describe the fixed points of $\varphi\colon {\mathcal C}(G)\to {\mathcal C}(G)$ in order to resolve equation (\ref{eq:anticoeffix}). To this end, we now prove some auxiliary lemmas that show how $c(G)$ is affected by certain properties that the graph $G$ may have.

\begin{definition}\label{def:disconnected}
Let $G$ be as above. We say that $G$ is \emph{disconnected} if there exists a vertex $1\leq r<m$ such that there is no arc $(a,b)\in G$ with $a\in\{1,\dots ,r\}$ and $b\in\{r+1,\dots,m\}$.

\end{definition}

\begin{lemma}\label{le:disconnected}
 If $G$ is disconnected, then $c(G)=0$.
\end{lemma}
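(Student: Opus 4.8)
The plan is to exhibit a fixed-point-free, sign-reversing involution on ${\mathcal C}(G)$, which forces the alternating sum
$c(G)=\sum_{A\in{\mathcal C}(G)}(-1)^{\ell(A)}$ to vanish. The involution I would use toggles the break of $A$ at the disconnection point. Concretely, let $r$ be a vertex witnessing the disconnectedness of $G$, so that no arc $(a,b)\in G$ satisfies $a\le r<b$. Given $A\in{\mathcal C}(G)$, recall that, after the identification of ${\mathcal C}_{\alpha,x}^{\beta,y}$ with the set compositions of $[m]$ refining $\Lambda$, every part of $A$ is an interval of consecutive integers. Define $\psi(A)$ to merge the part of $A$ ending at $r$ with the part beginning at $r+1$ whenever $A$ breaks between $r$ and $r+1$, and otherwise (when $r$ and $r+1$ lie in a common part) to split that part into the two intervals ending at $r$ and beginning at $r+1$.

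Two things must be checked. First, $\psi$ is a well-defined involution from ${\mathcal C}(G)$ to itself. The involution property is immediate, since merging and splitting at a fixed boundary are mutually inverse operations. The nontrivial point is that both operations keep us inside ${\mathcal C}(G)$. For a split this is automatic: ${\mathcal C}(G)$ is a lower ideal of $[\Lambda,(I)]$ by Lemma~\ref{le:lowerideal}, and the split produces a strict refinement of $A$ still lying above $\Lambda$. For a merge one argues that the new merged interval cannot contain the full interval $[a,b]$ of any edge: if it did, then either $a\le r<b$, contradicting the choice of $r$, or both endpoints lie on the same side of the break between $r$ and $r+1$, so the edge was already covered by a single part of $A$, contradicting $A\in{\mathcal C}(G)$. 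Since every other part is unchanged, no part of $\psi(A)$ covers an edge, and hence $\psi(A)\in{\mathcal C}(G)$ by Lemma~\ref{le:upperideal}.

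Second, $\psi$ is fixed-point-free and sign-reversing. It has no fixed points because every $A$ either does or does not break between $r$ and $r+1$, and $\psi$ changes exactly this feature; and it is sign-reversing because $\psi$ alters $\ell(A)$ by $\pm 1$, thereby flipping $(-1)^{\ell(A)}$. Pairing each $A$ with $\psi(A)$ therefore cancels all terms in the sum, yielding $c(G)=0$.

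The only real obstacle is the case analysis in the merge step, and this is precisely where the disconnectedness hypothesis enters: without a boundary uncrossed by any edge, merging across it could create a part covering an edge and thus falling out of ${\mathcal C}(G)$, which would destroy the involution. I would phrase the merge argument so that the choice of $r$ is the single place the hypothesis is invoked.
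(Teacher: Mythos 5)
Your proposal is correct and follows essentially the same route as the paper: both construct a fixed-point-free, sign-reversing involution on ${\mathcal C}(G)$ that toggles the break between $r$ and $r+1$, using Lemma~\ref{le:lowerideal} for the split case and the disconnectedness hypothesis (via Lemma~\ref{le:upperideal}) for the merge case. Your justification of why the merge stays inside ${\mathcal C}(G)$ is in fact spelled out a bit more carefully than in the paper's own proof.
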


\begin{proof}
Let $r$ be as in Definition \ref{def:disconnected}. We construct a different sign reversing involution  $\psi_r\colon {\mathcal C}(G)\to {\mathcal C}(G)$ with no fixed points, this will imply the claim. Let $A=(A_1,\ldots,A_k)\in {\mathcal C}(G)$ and let $r\in A_j$. If $r+1\in A_j$ let 
$$
\psi_r(A_j) :=(A_1,\ldots,A_{j-1},\{\min(A_j),\ldots,r\},\{r+1,\ldots,\max(A_j)\},A_{j+1},\ldots,A_k) .
$$
Thus by Lemma~\ref{le:lowerideal}, $\psi_r(A_j) \in {\mathcal C}(G)$ since $\psi_r(A_j)$ refines $A$. If $r+1\notin A_j$ then $r+1=\min A_{j+1}$. In this case let
$$\psi_r(A)=(A_1,\ldots,A_{j-1},A_j\cup A_{j+1},A_{j+2},\ldots,A_k)$$ Since $G$ is disconnected at $r$ we see that $\psi_r(A_j) \in {\mathcal C}(G) $, as desired. It is not difficult to check that in either case, $\psi_r(\psi_r(A_j))$. This completes the proof.
%
\end{proof}

\begin{lemma}\label{le:shortedge}
 If $(i,i+1)\in G$ for some $1\le i<m$, then 
  $$ c(G)= c\big(G|_{\{1,\ldots, i\}}\big)\cdot c\big(G|_{\{i+1,\ldots, m\}}\big) $$
\end{lemma}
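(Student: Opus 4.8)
The plan is to reduce everything to the combinatorics of the interval $[\Lambda,(I)]$, which by the identification introduced after Example~\ref{ex:LxHgraph} is the poset of set compositions $A\models[m]$ whose parts are intervals of consecutive integers listed in increasing order. Such an $A$ is then determined by its set of \emph{cuts} $S(A)\subseteq\{1,\ldots,m-1\}$, where $j\in S(A)$ means that $j$ and $j+1$ lie in different parts, and $\ell(A)=|S(A)|+1$; refinement corresponds to inclusion of cut sets, with $\Lambda$ having the full set of cuts. By Lemmas~\ref{le:lowerideal} and~\ref{le:upperideal}, $\mathcal C(G)$ is the lower ideal of $[\Lambda,(I)]$ whose complement has minimal elements the $B_{ab}$ with $ab\in G$. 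Translating through the cut-set picture, this says that $A\in\mathcal C(G)$ if and only if, for every edge $(a,b)\in G$, the set $S(A)$ meets $\{a,a+1,\ldots,b-1\}$.

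First I would record the effect of the short edge. The interval attached to $(i,i+1)$ is the single gap $\{i\}$, so the condition above forces $i\in S(A)$ for every $A\in\mathcal C(G)$. Thus $i$ and $i+1$ always lie in distinct, consecutive parts, and each $A\in\mathcal C(G)$ factors uniquely as a concatenation $A=A'\cdot A''$, with $A'$ a composition of $\{1,\ldots,i\}$ and $A''$ a composition of $\{i+1,\ldots,m\}$, satisfying $\ell(A)=\ell(A')+\ell(A'')$.

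The key step is to show that this factorization is compatible with the remaining edge constraints, i.e.\ that no edge of $G$ other than $(i,i+1)$ straddles the forced cut. Here I would invoke non-nestedness: if $(a,b)\in G$ had $a\le i$ and $b\ge i+1$ with $(a,b)\ne(i,i+1)$, then applying the non-nesting condition to the pair $(a,b)$ and $(i,i+1)$ (taking $c=i$, so $d=i+1$) is legitimate since $a\le i\le b$, and it would force $b<i+1$, contradicting $b\ge i+1$. Hence every edge of $G$ distinct from $(i,i+1)$ lies entirely in $\{1,\ldots,i\}$ or entirely in $\{i+1,\ldots,m\}$; that is, it is an edge of $G|_{\{1,\ldots,i\}}$ or of $G|_{\{i+1,\ldots,m\}}$ (both of which inherit non-nestedness, so $c(\cdot)$ is defined for them). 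Consequently $A\in\mathcal C(G)$ if and only if $A'\in\mathcal C\big(G|_{\{1,\ldots,i\}}\big)$ and $A''\in\mathcal C\big(G|_{\{i+1,\ldots,m\}}\big)$, and $A\mapsto(A',A'')$ is a bijection $\mathcal C(G)\cong\mathcal C\big(G|_{\{1,\ldots,i\}}\big)\times\mathcal C\big(G|_{\{i+1,\ldots,m\}}\big)$.

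Finally I would compute $c(G)$ directly from the defining sum~\eqref{eq:anticoef}, rather than through the involution $\varphi$. Using the bijection together with the additivity of length,
\[
c(G)=\sum_{A\in\mathcal C(G)}(-1)^{\ell(A)}=\Big(\sum_{A'}(-1)^{\ell(A')}\Big)\Big(\sum_{A''}(-1)^{\ell(A'')}\Big)=c\big(G|_{\{1,\ldots,i\}}\big)\cdot c\big(G|_{\{i+1,\ldots,m\}}\big),
\]
which is the claim. The main obstacle is the straddling argument of the third paragraph: the rest is bookkeeping, but it is precisely non-nestedness that prevents an edge from coupling the two sides across the forced cut, and this is exactly what lets the sum separate as a product. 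An alternative route would be to verify that $\varphi$ respects the decomposition $A=A'\cdot A''$, so that fixed points of $\varphi$ on $\mathcal C(G)$ correspond to pairs of fixed points and the identity follows from~\eqref{eq:anticoeffix}; but the direct summation above is shorter and avoids re-examining the merge/split rules.
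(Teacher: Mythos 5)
Your argument is correct and is essentially the paper's own proof: the paper likewise uses non-nestedness to rule out any other edge $(a,b)$ with $a\le i<b$, observes that the forced separation of $i$ and $i+1$ gives $\mathcal C(G)\cong\mathcal C(G')\times\mathcal C(G'')$, and factors the defining sum directly. You have simply filled in the details (the cut-set description of $\mathcal C(G)$ and the explicit application of the non-nesting condition) that the paper leaves implicit.
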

\begin{proof} Let $(i,i+1)\in G$ for some $1\le i<m$, then there is no other edge $(a,b)\in G$ with $a\le i<b$ since $G$ is non-nested. Thus we can think of $G$ as formed by the subgraphs $G' = G|_{\{1,\ldots, i\}}$ and $G''=G|_{\{i+1,\ldots, m\}}$ together with the edge $(i,i+1)$ that connects $G'$ and $G''$. Moreover, notice that in this case the set ${\mathcal C}(G)$ can be thought of as  ${\mathcal C}(G')\times {\mathcal C}(G'')$ since for any $A\in{\mathcal C}(G)$, $i$ and $i+1$ must be separated in $A$.
%
Hence
$$ c(G) = \sum_{A\in {\mathcal C}(G)} (-1)^{\ell(A)}  = \sum_{(A',A'')\in {\mathcal C}(G')\times {\mathcal C}(G'')} (-1)^{\ell(A')+\ell(A'')}= c\big(G|_{\{1,\ldots, i\}}\big)\cdot c\big(G|_{\{i+1,\ldots, m\}}\big) 
$$
as desired. \end{proof}

From Lemma~\ref{le:disconnected} and Lemma~\ref{le:shortedge}, we can assume from now on that $G$ is non-nested, connected and with no short  edges, i.e., edges of the form $(i,i+1)$. In particular, such $G$ must contain an edge $(1,\ell)\in G$ with  $2<\ell\le m$. Moreover, if $\ell=m$ it follows that $G=\{(1,m)\}$ and the only fixed point of $\varphi$ is the set composition $A=(\{1\},\{2,\ldots,m\})$.

Now, assume that $2<\ell<m$. Since $G$ is connected, there must be an edge
$(a,b)\in G$ such that $1<a\le \ell<b\le m$. Consider the set of edges $\big\{(a_1,b_1),\ldots,(a_n,b_n)\big\}\subseteq G$ such that  $$1<a_1<a_2<\ldots<a_n\le\ell<b_1<b_2\ldots< b_n\le m.$$

\begin{lemma}\label{le:nequal1} With $\big\{(a_1,b_1),\ldots,(a_n,b_n)\big\}$ as above, we have that the fixed points of $\varphi$ depend only on $(a_1,b_1)$.
\end{lemma}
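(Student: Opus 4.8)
The plan is to describe the fixed points of $\varphi$ explicitly enough to see that, among the straddling edges $(a_1,b_1),\dots,(a_n,b_n)$, only $(a_1,b_1)$ ever enters the definition of the involution. Throughout I would use the identification of $[\Lambda,(I)]$ with the set compositions of $[m]$ whose parts are intervals of consecutive integers (equivalently, compositions of the integer $m$); by Lemma~\ref{le:upperideal} a composition $A$ lies in $\mathcal{C}(G)$ precisely when no part of $A$ contains both endpoints of an edge of $G$. First I would record the local rule for a fixed point: for the position $i$ lying in the part $A_j=[p,q]$, the map $\varphi_i$ fixes $A$ exactly when (i) $p<i$, or (ii) $A_j=\{i\}$ and some edge $(i,r)\in G$ has $r$ in the next part $A_{j+1}$, or (iii) $p=i<q$ and the previous part is the singleton $\{i-1\}$ (here I use that $G$ has no short edges, so the clause $(i-1,i)\in G$ never occurs). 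A fixed point is then a composition satisfying (i)--(iii) for every $1\le i<m$.

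The key structural step is to pin down the ``head'' of a fixed point $A$. Applying the rule at $i=1$ forces $A_1=\{1\}$: otherwise $1=\min A_1$ with $|A_1|>1$ and the $1$-Split applies. Since $(1,\ell)$ is the unique edge of $G$ out of $1$ (two edges sharing a left endpoint would violate the non-nesting condition), the singleton $\{1\}$ survives the $1$-Merge only if $\ell$ lies in the second part, so $A_2=[2,q]$ with $q\ge\ell$; in particular $A_2$ has at least two elements and contains the whole interval $[2,\ell]$. Now every straddling left endpoint satisfies $2\le a_i\le\ell\le q$, hence $a_i\in A_2$; and because $a_1\in A_2$ while $A\in\mathcal{C}(G)$ cannot place $a_1$ and $b_1$ in one part, the edge $(a_1,b_1)$ forces the cut $q<b_1$. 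Consequently $b_i>b_1>q$ for all $i$, so every straddling right endpoint lies strictly after the part $A_2$.

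It remains to conclude that the edges $(a_2,b_2),\dots,(a_n,b_n)$ are invisible to the construction at any fixed point, so that the fixed points agree for $G$ and for the graph $G_1$ obtained by deleting them. First, their endpoints are automatically separated by the cut at $q$ (since $a_i\le q<b_i$), so membership in $\mathcal{C}(G)$ and in $\mathcal{C}(G_1)$ coincide on such $A$. Second, an edge of $G$ is consulted by $\varphi$ only inside an $i$-Merge, where one tests whether $(i,r)$ is an edge for $i$ a \emph{singleton} part; but each $a_i$ lies in the $\ge 2$-element part $A_2$, so it is never a singleton and the edges $(a_i,b_i)$ are never tested, and being non-short they play no role in any $i$-Split either. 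Hence at every fixed point the merge/split decisions are identical for $G$ and $G_1$, giving $\mathrm{Fix}(\varphi_G)=\mathrm{Fix}(\varphi_{G_1})$; rerunning the same head-analysis for $G_1$ (which still contains $(1,\ell)$ and $(a_1,b_1)$) yields the reverse inclusion and shows the common description uses only $(a_1,b_1)$. The main obstacle is the head-analysis of the second paragraph: one must check carefully that the forced shape $A_1=\{1\}$, $A_2\supseteq[2,\ell]$, $q<b_1$ confines all the $a_i$ to a single non-singleton part, since this is exactly what renders the remaining straddling edges inert.
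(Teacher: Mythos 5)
Your proof is correct and follows essentially the same route as the paper's: both force the head shape $A_1=\{1\}$, $A_2=[2,q]$ with $\ell\le q<b_1$ at any fixed point, which confines every $a_i$ to the non-singleton part $A_2$ and pushes every $b_i$ past it, so the straddling edges other than $(a_1,b_1)$ impose no constraint and are never consulted by the merge/split rules. The only difference is cosmetic: the paper continues the structural analysis one step further (pinning down $A_3=\{r+1\}$ and the forced edge $(c,d)$, which it reuses in Lemma~\ref{le:fixshape}), while you instead make the ``depends only on'' claim precise by comparing fixed-point sets with the graph obtained by deleting $(a_2,b_2),\ldots,(a_n,b_n)$ --- a clean and valid way to finish.
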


\begin{proof} 
Assume that $n>1$ and  that $A=(A_1,\ldots,A_k)\in  {\mathcal C}(G)$ is a fixed point of $\varphi$. We have that $A_1=\{1\}$, otherwise we could perform a 1-split on $A$. Similarly, $A_2=\{2,\ldots,r\}$ and thus $ \ell\le r$, otherwise we could perform a 1-merge on $A$. Also, $\ell\le r<b_1$ as the edge $(a_1,b_1)$ can not be contained in $A_2$. Moreover, $|A_2|>2$ and $\{r+1,\ldots,m\}$ has at least two elements. Thus $A_3=\{r+1,...\}$ is nonempty. If $|A_3|>1$, then we can perform a $r+1$-split which contradicts the choice of $A$. Hence, $A_3=\{r+1\}$. Let $c=r+1$ and  $A_4=\{c+1,\ldots,r'\}$.  If there is no edge $(c,d)\in G$,  then we would be allowed to do a $c$-merge on $A$, contradicting its choice. Thus such an edge $(c,d)$ exists. Since $G$ is non-nested, we have $ 1<a_1\le\ell <c\le b_1<b_n<d\le m$. That is
\begin{equation}\label{eq:firstarcs}
G=
\begin{tikzpicture}[baseline=.2cm]
	\foreach \x in {1,3,5,7,9,11,13,15,17} 
		\node (\x) at (\x/2,0) [inner sep=-1pt] {$\bullet$};
	\node at (1/2,-.2) {$\scriptstyle 1$};
	\node at (2/2,-.2) {$\scriptstyle \cdots$};	
	\node at (3/2,-.2) {$\scriptstyle a_1$};
	\node at (4/2,-.2) {$\scriptstyle \cdots$};	
	\node at (5/2,-.2) {$\scriptstyle a_n$};
	\node at (6/2,-.2) {$\scriptstyle \cdots$};	
	\node at (7/2,-.2) {$\scriptstyle \ell$};
	\node at (8/2,-.2) {$\scriptstyle \cdots$};	
	\node at (9/2,-.2) {$\scriptstyle c$};
	\node at (10/2,-.2) {$\scriptstyle \cdots$};	
	\node at (11/2,-.2) {$\scriptstyle b_1$};
	\node at (12/2,-.2) {$\scriptstyle \cdots$};	
	\node at (13/2,-.2) {$\scriptstyle b_n$};
	\node at (14/2,-.2) {$\scriptstyle \cdots$};	
	\node at (15/2,-.2)  {$\scriptstyle d$};
	\node at (16/2,-.2) {$\scriptstyle \cdots$};	
	\node at (17/2,-.2)  {$\scriptstyle m$};
	\draw [thick] (1) .. controls (3/2,1) and (5/2,1) .. (7); 
	\draw [thick] (3) .. controls (5/2,1) and (9/2,1) .. (11); 
	\draw [densely dotted] (5) .. controls (7/2,.75) and (11/2,.75) .. (13); 
	\draw [densely dotted,thick] (9) .. controls (11/2,1) and (13/2,1) .. (15); 
\end{tikzpicture} 
   \end{equation}
Hence,
 \begin{equation}\label{eq:firstparts}
  A=\big(\{1\},\{2,\ldots,c-1\},\{c\},\{c+1,\ldots,r'\},\ldots\big)
  \end{equation}
where $r'\ge d$. Thus, the fixed point $A$ does not depend on the edges $(a_2,b_2),\ldots,(a_n,b_n)$, and the claim follows.
\end{proof}

The proof of  Lemma~\ref{le:nequal1} gives us a necessary condition on the fixed points of $\varphi$.

\begin{lemma}\label{le:fixshape}   Let $G$ be connected with no small edges.
 If $A\in  {\mathcal C}(G)$ is a fixed point of $\varphi$, then 
    $$A=(\{{\small 1}\},\{2,\ldots,x_2-1\},\{x_2\},\{x_2+1,\ldots,x_4-1\},\ldots,\{x_{2k}\},\{x_{2k}+1,\ldots,m\})$$
    when $\ell(A)$ is even, and
    $$A=(\{1\},\{2,\ldots,x_2-1\},\{x_2\},\{x_2+1,\ldots,x_4-1\},\ldots,\{x_{2k}\},\{x_{2k}+1,\ldots,m-1\},\{m\})$$    
when $\ell(A)$ is odd. In each case $G$ contains, respectively, edges of the form
\begin{align*}
& \big\{(x_0,y_0),(x_1,y_1),\ldots,(x_{2k},y_{2k})\big\}\text{ with }x_0=1 \text{ and }y_{2k} = m,\text{ or},\\
 &\big\{(x_0,y_0),(x_1,y_1),\ldots,(x_{2k},y_{2k}),(x_{2k+1},y_{2k+1})\big\}\text{ with }x_0=1 \text{ and }y_{2k+1} = m
 \end{align*}
  such that 
  \begin{enumerate}
   \item[(i)] for $0\le i\le k-1$ we have $x_{2i}<x_{2i+1}\le y_{2i}<x_{2i+2}\le y_{2i+1}<y_{2i+2}$,
   \item[(ii)] there is no edge $(x,y)\in G$ such that $x_{2i}<x<x_{2i+1}$.
     \end{enumerate}
\end{lemma}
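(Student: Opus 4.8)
The plan is to prove the two assertions separately: first pin down the \emph{shape} of a fixed point $A=(A_1,\ldots,A_k)$, and then exhibit the required chain of edges. Throughout I will use three facts about such an $A$: (1) every $i$-split and $i$-merge rule must fail; (2) since $A\in\mathcal C(G)\subseteq[\Lambda,(I)]$, each $A_j$ is a consecutive integer interval (the parts being listed in increasing order) and, by the way $G$ was defined from the minimal elements of $[\Lambda,(I)]\setminus\mathcal C(G)$, no single part contains both endpoints of an edge of $G$ (call such a part \emph{edge-free}); and (3) $G$ is non-nested, connected, and has no short edges.

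For the shape I would first note $1\in A_1$, so $A_1$ is the interval starting at $1$, and failure of the $1$-split forces $A_1=\{1\}$. For $j\ge 2$, if $A_j$ is big (size $>1$), then writing $i=\min A_j$, failure of the $i$-split together with $(i-1,i)\notin G$ forces $A_{j-1}=\{i-1\}$ to be a singleton; so no two big parts are adjacent. Dually, if $A_j=\{i\}$ is a non-final singleton, failure of the $i$-merge yields an edge $(i,r)$ with $r\in A_{j+1}$, and since $(i,i+1)\notin G$ we get $r\ge i+2$, so $A_{j+1}$ is big; hence no two singletons are adjacent. Thus the parts strictly alternate singleton/big, starting with $\{1\}$; the last part is big exactly when $\ell(A)$ is even, and when $\ell(A)$ is odd the terminal singleton must be $\{m\}$. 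This gives the two displayed forms.

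For the edges I would argue iteratively along $A$, generalizing the proof of Lemma~\ref{le:nequal1}. At each singleton $\{x_{2i}\}$, failure of the $x_{2i}$-merge gives an edge $(x_{2i},y_{2i})$ into the next big part $B_i=\{x_{2i}+1,\dots,x_{2i+2}-1\}$; non-nestedness makes this the \emph{only} edge at $x_{2i}$, and $y_{2i}\in B_i$ gives $x_{2i}<y_{2i}<x_{2i+2}$. To bridge to the next singleton I use connectivity: the cut at $r=y_{2i}$ is crossed by some edge, and I take $(x_{2i+1},y_{2i+1})$ to be the crossing edge with \emph{smallest} left endpoint, so $x_{2i+1}\le y_{2i}<y_{2i+1}$. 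Non-nestedness against $(x_{2i},y_{2i})$ (which is the unique edge at $x_{2i}$) rules out $x_{2i+1}\le x_{2i}$, giving $x_{2i}<x_{2i+1}$, while edge-freeness of $B_i$ pushes the right endpoint past the next singleton, $y_{2i+1}\ge x_{2i+2}$. Applying non-nestedness once more to $(x_{2i+1},y_{2i+1})$ and the next even edge $(x_{2i+2},y_{2i+2})$ yields $y_{2i+1}<y_{2i+2}$. Chaining these gives the interleaving inequalities (i). Condition (ii) then follows from the minimal choice of $x_{2i+1}$ together with edge-freeness: an edge with left endpoint strictly between $x_{2i}$ and $x_{2i+1}$ would either lie inside $B_i$ (impossible) or cross the cut $y_{2i}$ with a smaller left endpoint (contradicting minimality).

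Finally I would handle termination. The iteration stops at the last big part $L$; if the cut $r=y_{2k}$ were interior to $L$, connectivity would force a crossing edge with both endpoints in $L$, violating edge-freeness. Hence in the even case $y_{2k}=m$, and in the odd case the same argument at $L=\{x_{2k}+1,\dots,m-1\}$ produces one extra edge $(x_{2k+1},y_{2k+1})$ whose right endpoint is forced to be $m$. I expect the main obstacle to be precisely this step-by-step coordination---matching the no-merge edges, the connectivity crossing edges, the non-nesting inequalities, and the edge-freeness of the big parts so that the indices line up globally and (i),(ii) hold simultaneously; the shape argument of the second paragraph is comparatively mechanical. The base of the iteration is exactly Lemma~\ref{le:nequal1}, the case $i=0$.
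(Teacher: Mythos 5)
Your proof is correct, and it is organized rather differently from the paper's. The paper derives the lemma by induction on the number of edges of $G$: it invokes Lemma~\ref{le:nequal1} to see that a fixed point is determined by the first few edges $(1,\ell)$, $(a_1,b_1)$, $(c,d)$, peels off the initial singleton--interval pair, and recurses on the smaller graph $G|_{\{c_j,\ldots,m\}}$, leaving the inductive bookkeeping largely implicit. You instead give a direct, non-inductive argument in two phases: first the purely local observation that failure of every $i$-split and $i$-merge forces the parts to alternate singleton/interval starting with $\{1\}$ (with the parity of $\ell(A)$ determining whether the last part is $\{m\}$), and then a left-to-right greedy construction of the edge chain, taking $(x_{2i},y_{2i})$ to be the unique merge-blocking edge at each singleton and $(x_{2i+1},y_{2i+1})$ to be the crossing edge of the cut at $y_{2i}$ with smallest left endpoint. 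This separation makes the inequalities in (i) and the minimality condition (ii) fall out cleanly from non-nestedness and edge-freeness of the parts, and it avoids the somewhat informal ``follows by induction on the size of $G$'' step of the paper. The one place you are terser than you should be is the termination step: that the edge crossing the cut at $y_{2k}$ has \emph{both} endpoints in the last interval is not a consequence of connectivity alone --- you need one more application of non-nestedness against $(x_{2k},y_{2k})$, together with the uniqueness of the edge with left endpoint $x_{2k}$, to exclude a left endpoint $\le x_{2k}$; this is a one-line fix and does not affect the validity of the argument.
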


\begin{proof} 

The case where $G$ has only one edge was considered prior to Lemma~\ref{le:nequal1}. In this case, the unique fixed point is $A=(\{1\},\{2,\ldots,m\})$. 

If $G$ has more than one edge,
Lemma~\ref{le:nequal1} tells us that the fixed points of $\varphi$ depend only on edges of the form
$(1,\ell)$, $(a_1,b_1)$ and the possible $(c,d)$ as in equation~\eqref{eq:firstarcs}. If there is no such edge $(c,d)$, then 
$G=\big\{(1,\ell),(a_1,b_1),\ldots,(a_n,b_n)\big\}$, where $1<a_j\le \ell<b_j\le m$ and $b_n=m$.
For $n>1$, we have seen in the proof of Lemma~\ref{le:nequal1} that if there is no arc $(c,d)\in G$ with $ 1<a_1<a_n \le\ell <c\le b_1<b_n<d\le m$, then there is no fixed point of $\varphi$. 
If $n=1$, then $G=\big\{(1,\ell),(a,m)\big\}$ for $1<a\le \ell<m$. Our analysis shows that in this case there is a unique fixed point $A=(\{1\},\{2,\ldots,m-1\},\{m\})$. Here $\ell(A)=3$ is odd, $k=0$ and 
again all the conditions of the lemma are satisfied.

Assume now that $G$ has an edge $(c,d)$ as in equation~\eqref{eq:firstarcs}. Since for $j>1$, the edges $(a_j,b_j)$ do not play a role 
in our analysis of the fixed point of $\varphi$, we can omit them. Let $(a,b)=(a_1,b_1)$ and consider the set of arcs $\big\{(c_1,d_1),\ldots,(c_n,d_n)\big\}\subseteq G$ such that $\ell<c_j\le b<d_j\le m$. We now have
\begin{equation}\label{eq:nextarcs}
G=
\begin{tikzpicture}[baseline=.2cm]
	\foreach \x in {1,3,5,7,9,11,13,15,17} 
		\node (\x) at (\x/2,0) [inner sep=-1pt] {$\bullet$};
	\node at (1/2,-.2) {$\scriptstyle 1$};
	\node at (2/2,-.2) {$\scriptstyle <$};	
	\node at (3/2,-.2) {$\scriptstyle a$};
	\node at (4/2,-.2) {$\scriptstyle \le$};	
	\node at (5/2,-.2) {$\scriptstyle \ell$};
	\node at (6/2,-.2) {$\scriptstyle <$};	
	\node at (7/2,-.2) {$\scriptstyle c_1$};
	\node at (8/2,-.2) {$\scriptstyle <\cdots <$};	
	\node at (9/2,-.2) {$\scriptstyle c_n$};
	\node at (10/2,-.2) {$\scriptstyle \le$};	
	\node at (11/2,-.2) {$\scriptstyle b$};
	\node at (12/2,-.2) {$\scriptstyle <$};	
	\node at (13/2,-.2) {$\scriptstyle d_1$};
	\node at (14/2,-.2) {$\scriptstyle <\cdots <$};	
	\node at (15/2,-.2)  {$\scriptstyle d_n$};
	\node at (16/2,-.2) {$\scriptstyle \le$};	
	\node at (17/2,-.2)  {$\scriptstyle m$};
	\draw [thick] (1) .. controls (2/2,.75) and (4/2,.75) .. (5); 
	\draw [thick] (3) .. controls (5/2,1) and (9/2,1) .. (11); 
	\draw [densely dotted,thick] (7) .. controls (9/2,.75) and (11/2,.75) .. (13); 
	\draw [densely dotted,thick] (9) .. controls (11/2,.75) and (13/2,.75) .. (15); 
\end{tikzpicture} 
   \end{equation}
For each $1\le j<n-1$, a potential fixed point according to Equation~\eqref{eq:firstparts}  would need to be of the form
\begin{equation}\label{eq:startfixpoint}
 A=(\{1\},\{2,\ldots,c_{j}-1\},\{c_{j}\},\{c_{j}+1,\ldots,r\},\{r+1,\ldots\},\ldots)
 \end{equation}
  where $d_j\le r<d_{j+1}$. The second inequality comes from the fact that we are not allowed to have $c_{j+1}$ and $d_{j+1}$ in the same part.
Hence if $A$ is a fixed point it must have the form described in Equation~\eqref{eq:startfixpoint} and we must have
 $$\big\{(x_0,y_0),(x_1,y_1),(x_2,y_2),\ldots\big\}=\big\{(1,\ell),(a,b),(c_j,d_j)\ldots\big\}\subseteq G,$$
where $ 1<a \le\ell <c_j \le b <d_j$ and there is no edge $(x,y)\in G$ such that $1<x<a$. The remaining structure of the fixed point in Equation~\eqref{eq:startfixpoint}
depends only on the structure of the smaller graph $G|_{\{c_j,\ldots,m\}}$. The result then follows by induction on the size of $G$.
\end{proof}

Now that we have a better understanding of the possible structure of the fixed points of $\varphi$, it may appear that there are many possibilities.
It turns out that there could be at most two fixed points of different parity. 

\begin{proof}[Proof of Theorem~\ref{thm:antiLxP}] Let $A$ be a fixed point of $\varphi$. Assume first that $\ell(A)$ is even. Lemma~\ref{le:fixshape} gives that we must have edges $\big\{(x_0,y_0),(x_1,y_1),\ldots,(x_{2k},y_{2k})\big\}\subseteq G$
satisfying the conditions (i), (ii). If $k=0$, then $G=\{(1,m)\}$ and  there is a unique fixed point $A=(\{1\},\{2,\ldots,m\})$. Now assume that $k>0$, in which case $y_{2k}=m$ and the edge $(x_{2k},y_{2k})$ is determined. With $i=k-1$ in condition (i) of Lemma~\ref{le:fixshape} we have
\begin{equation}\label{eq:lastarcs}
G=
\begin{tikzpicture}[baseline=.2cm]
	\foreach \x in {1,3,5,7,9,11,13,15,17} 
		\node (\x) at (\x/2,0) [inner sep=-1pt] {$\bullet$};
	\node at (1/2,-.2) {$\scriptstyle 1$};
	\node at (2/2,-.2) {$\scriptstyle $};	
	\node at (3/2,-.2) {$\scriptstyle \cdots$};
	\node at (4/2,-.2) {$\scriptstyle $};	
	\node at (5/2,-.2) {$\scriptstyle \cdots$};
	\node at (6/2,-.2) {$\scriptstyle $};	
	\node at (7/2,-.2) {$\scriptstyle x_{2k-2}$};
	\node at (8/2,-.2) {$\scriptstyle <$};	
	\node at (9/2,-.2) {$\scriptstyle x_{2k-1}$};
	\node at (10/2,-.2) {$\scriptstyle \le$};	
	\node at (11/2,-.2) {$\scriptstyle y_{2k-2}$};
	\node at (12/2,-.2) {$\scriptstyle <$};	
	\node at (13/2,-.2) {$\scriptstyle x_{2k}$};
	\node at (14/2,-.2) {$\scriptstyle \le$};	
	\node at (15/2,-.2)  {$\scriptstyle y_{2k-1}$};
	\node at (16/2,-.2) {$\scriptstyle <$};	
	\node at (17.5/2,-.2)  {$\scriptstyle y_{2k}=m$};
	\draw [densely dotted,thick] (7) .. controls (8/2,.75) and (10/2,.75) .. (11); 
	\draw [densely dotted,thick] (9) .. controls (11/2,.75) and (13/2,.75) .. (15); 
	\draw [thick] (13) .. controls (14/2,.75) and (16/2,.75) .. (17); 
\end{tikzpicture} ,
   \end{equation}
 and condition (ii) on the edges $(x_{2k-2},y_{2k-2})$, $(x_{2k-1},y_{2k-1})$ must also satisfy the condition (ii) of Lemma~\ref{le:fixshape}. 
 Thus these edges $(x_{2k-1},y_{2k-1})$ and $(x_{2k-2},y_{2k-2})$ are uniquely determined and are such that they bound the vertex $x_{2k}$ on the right and on the left, respectively, i.e. $y_{2k-2}<x_{2k}\le y_{2k-1}$.
In this way, $\big\{(x_{2k-2},y_{2k-2}),(x_{2k-1},y_{2k-1}),(x_{2k},y_{2k})\big\}$ are uniquely determined.
 Now we can repeat the process with $i=k-2, k-3,\ldots,0$ in conditions (ii) and (iii) of  Lemma~\ref{le:fixshape} to successively determine the edges  $\big\{(x_0,y_0),(x_1,y_1),\ldots,(x_{2k},y_{2k})\big\}\subseteq G$, and the partition $A$ is given as in Lemma~\ref{le:fixshape}.  
 
 The case when the fixed point $A$ has odd length is very similar. The  condition (i) of  Lemma~\ref{le:fixshape} gives $y_{2k+1}=m$ and hence determines the edge $(x_{2k+1},y_{2k+1})$ . 
Then condition (iii) of  Lemma~\ref{le:fixshape} with $i=k-1$ determines uniquely (if it exists) $(x_{2k},y_{2k})$ as the rightmost edge of $G$ such that $y_{2k}<y_{2k+1}$. Once $x_{2k}$ is determined
we continue the process as above with $i=k-2, k-3,\ldots,0$ to determine uniquely, if possible, all the other edges. Again, if at any time in the process we fail, then there is no fixed point with $\ell(A)$  odd. If we do not fail, there is a unique fixed point with $\ell(A)$  odd.

In conclusion, there are four possibilities. We could have no fixed point and in this case $c(G)=0$; we could have exactly one fix point of odd length and $c(G)=-1$; we could have exactly one fixed point of even length and $c(G)=1$; or we have exactly two fixed points of different parity each and $c(G)=0$ in that case. In all cases Theorem~\ref{thm:antiLxP} follows. 
\end{proof}

\begin{remark}\label{rem:coefcomp} 

Once a non nesting graph $G$ is given, the value of $c(G)$ is very efficient to compute. Lemma~\ref{le:disconnected} gives us that $c(G)=0$ if $G$ is disconnected. Then we decompose 
$G$ according to Lemma~\ref{le:shortedge} into components $G'$ with no short edges. For each component, we follow the (efficient) procedure in the proof of Theorem~\ref{thm:antiLxP} to determine if there is
and even and/or an odd fixed point. This gives us quickly the value of $c(G')$ for each component $G'$.
\end{remark}

\section{Antipode for commutative linearized Hopf monoid $\bf H$}\label{s:antiH}

In this section we show new formulas for commutative and cocommutative linearized Hopf monoid $\bH$. 
The interest of such a formulas is from the fact that it gives a formula for the Hopf algebra $\Kcb(\bH)$. We also aim to introduce a geometrical interpretation related to our antipode formula in terms of certain faces of a polytope in the spirit of the work of Aguiar-Ardila~\cite{Aguiar-Ardila}.
To achieve this, first we give a formula for the antipode in term of orientations in hypergraphs as in Section~\ref{subsec:acyclic}.
The second part will be done jointly with J. Machacek in a sequel paper and is previewed in Section~\ref{ss:nestohedron}

\subsection{Takeuchi's Formula for $\bH$}\label{ss:UsingTakeushiH}

Let $\bH$ be a Hopf monoid linearized in the basis $\bh$. 
Again, we intend to resolve the cancelation in the Takeuchi formula for $\bH$. For a fixed finite set $I$ let $x\in \bh[I]$. 
From~\eqref{eq:takeuchi} we have
\begin{equation}\label{eq:takeuchiH1}
 S_I(x) =  \sum_{A\models I} (-1)^{\ell(A)} \mu_A \Delta_A(x)
  = \sum_{A\models I \atop \Delta_A(x)\ne 0} (-1)^{\ell(A)} x_A,
\end{equation}
 where for $(A_1,\ldots,A_k)\models I$  
and $\Delta_A(x)\ne 0$ we write $x_A=\mu_A\Delta_A(x) \in \bh[I].$
These are unique elements since $\bH$ is linearized in the basis $\bh$. 
We can thus rewrite equation~\eqref{eq:takeuchiH1} as follows
\begin{equation}\label{eq:takeuchiH2}
 S_I(x) = \sum_{y\in \bh[I]}\left(\sum_{A\models I \atop x_A=y} (-1)^{\ell(A)}\right) y.
\end{equation}
Let 
 $${\mathcal C}_{x}^{y}=\big\{A\models I : x_A=y\big\}$$

So far we have not considered any commutative property of $\bf H$. In general we have no control on the set ${\mathcal C}_{x}^{y}$, but when $\bf H$ is commutative and cocommutative,
our next theorem is a new formula for the antipode of $\bH$. The result and its proof is very similar to analogous results in~\cite{Benedetti-Sagan,Bergeron-Ceballos}.
In order to state it, we need some more notation. Given $x,y\in \bh[I]$ such that ${\mathcal C}_{x}^{y}\ne \emptyset$, choose a fixed minimal element $\Lambda=(\Lambda_1,\Lambda_2,\ldots,\Lambda_m)$  in ${\mathcal C}_{x}^{y}$ under refinement. We will see in Lemma~\ref{lem:minimalcom} that $\Lambda$ is unique up to permutation of its parts and that $\Delta_\Lambda(x)=x_{\Lambda_1}\otimes\cdots\otimes x_{\Lambda_m}$. Cocommutativity and associativity guarantees that for $P=\Lambda_i\subseteq I$ the element $x_P$ defined from $\Delta_{P,I\setminus P}(x)=x_P\otimes x_{I\setminus P}$ will be such that
$x_P=x_{\Lambda_i}$. Recall that a \emph{hypergraph $G$} on a vertex set $V$ is a collection $E$ of subsets of $V$. The elements of $E$ are called \emph{hyperedges} and the hypergraph $G$ is \emph{simple} if every $e\in E$ appears only once. We now define a  simple hypergraph $G_x^y$ on the vertex set $[m]$ 
such that $U\subseteq [m]$ is a hyperedge of $G_x^y$ if and only if
  \begin{equation}\label{eq:Gcom}
      \prod_{i\in U} x_{\Lambda_i} \ne x_{\cup_{i\in U} \Lambda_i}
  \qquad \text{ \bf and }\qquad \forall(P\subset U)\quad \prod_{i\in P} x_{\Lambda_i} = x_{\cup_{i\in P} \Lambda_i} 
  \end{equation}
Up to reordering of the vertices $\{1,2,\ldots,m\}$, commutativity, cocommutativity and Lemma~\ref{lem:minimalcom} will guarantee that $G_x^y$ does not depend on our choice of $\Lambda$.

\begin{theorem}\label{thm:antiH} Under the conditions above 
 \begin{equation}\label{eq:thm2}
 S_I(x) = \sum_{y\in \bh[I]} a(G_x^y) y,
\end{equation}
where $a(G_x^y)$ is a signed sum of acyclic orientations of the hypergraph $G_x^y$ to be defined in Section~\ref{subsec:acyclic}
\end{theorem}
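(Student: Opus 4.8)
The plan is to follow the strategy behind the proof of Theorem~\ref{thm:antiLxP}, but now to exploit commutativity and cocommutativity to collapse the ordered data and reduce the computation of
\[
c_{x}^{y}=\sum_{A\in{\mathcal C}_{x}^{y}}(-1)^{\ell(A)}
\]
to a signed enumeration governed entirely by the hypergraph $G_{x}^{y}$, in the spirit of the analogous results in~\cite{Benedetti-Sagan,Bergeron-Ceballos}. First I would use that, since $\mu$ is commutative and $\Delta$ is cocommutative, the element $x_A=\mu_A\Delta_A(x)$ depends only on the \emph{unordered} partition underlying the set composition $A$. Combined with Lemma~\ref{lem:minimalcom} (uniqueness of $\Lambda$ up to reordering and $\Delta_\Lambda(x)=x_{\Lambda_1}\otimes\cdots\otimes x_{\Lambda_m}$), this identifies ${\mathcal C}_{x}^{y}$, up to permutation of parts, with a family of coarsenings of $\Lambda$, i.e. with certain set partitions $\rho$ of the index set $[m]$. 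Because every one of the $\ell(\rho)!$ orderings of an admissible $\rho$ again lies in ${\mathcal C}_{x}^{y}$ and has the same length, the sum \eqref{eq:takeuchiH2} rewrites as
\[
c_{x}^{y}=\sum_{\rho}(-1)^{\ell(\rho)}\,\ell(\rho)!,
\]
the sum ranging over the set partitions $\rho$ of $[m]$ with $x_\rho=y$.

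Next I would characterize the admissible $\rho$ in terms of $G_{x}^{y}$. Associativity and coassociativity give $x_\rho=\prod_{R\in\rho}x_{\cup_{i\in R}\Lambda_i}$, while $y=\prod_{i=1}^{m}x_{\Lambda_i}$, so $x_\rho=y$ holds as soon as each block $R$ is \emph{independent}, meaning $\prod_{i\in R}x_{\Lambda_i}=x_{\cup_{i\in R}\Lambda_i}$. The minimality clause in the definition \eqref{eq:Gcom} is designed precisely so that the independent subsets of $[m]$ are exactly those containing no hyperedge of $G_{x}^{y}$. Thus I expect $c_{x}^{y}=\sum_{\rho}(-1)^{\ell(\rho)}\,\ell(\rho)!$, now with $\rho$ running over the partitions of $[m]$ all of whose blocks are $G_{x}^{y}$-independent. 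The delicate point is the converse direction, namely that a block carrying a hyperedge genuinely breaks $x_\rho=y$, i.e. that non-independence is upward closed. For the hypergraph monoid $\bf HG$ of Section~\ref{ss:hypergraph} this is immediate, and in general I would deduce it from the universal role of $\bf HG$ among commutative and cocommutative linearized Hopf monoids announced in the introduction.

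Finally I would recognize the resulting signed sum as a chromatic evaluation of the hypergraph. Writing $a_k$ for the number of partitions of $[m]$ into $k$ nonempty $G_{x}^{y}$-independent blocks, the natural chromatic polynomial $\chi_{G_{x}^{y}}(t)=\sum_k a_k\,t(t-1)\cdots(t-k+1)$ satisfies
\[
\chi_{G_{x}^{y}}(-1)=\sum_k a_k(-1)^k k!=c_{x}^{y},
\]
since $t(t-1)\cdots(t-k+1)$ evaluates to $(-1)^k k!$ at $t=-1$. By the hypergraph analogue of Stanley's $(-1)$-color theorem, developed in Section~\ref{subsec:acyclic} and Section~\ref{ss:antiStan}, this value equals the signed count $a(G_{x}^{y})$ of acyclic orientations, which yields \eqref{eq:thm2}. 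This last identification is where I expect the real obstacle to lie: unlike the graph case, there is no off-the-shelf notion of acyclic orientation for hypergraphs, so the crux is to define $a(G)$ so that the cancellation in $\sum_k a_k(-1)^k k!$ leaves exactly the acyclic orientations. I would establish this either by a deletion–contraction recursion on the hyperedges of $G_{x}^{y}$, or — in closer parallel to the fixed-point analysis in the proof of Theorem~\ref{thm:antiLxP} — by a sign-reversing involution on ordered independent partitions whose surviving fixed points are precisely the acyclic orientations.
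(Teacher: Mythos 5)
Your reduction of $c_x^y$ to a signed sum over partitions of $[m]$ with $G_x^y$-independent blocks is sound and matches what the paper actually does: Lemma~\ref{lem:minimalcom} gives uniqueness of $\Lambda$ up to reordering, Lemmas~\ref{le:Hlowerideal} and~\ref{le:Hupperideal} show that ${\mathcal C}_x^y$ is a lower ideal whose complement is generated by single coarsened blocks, and this is exactly why ``non-independence is upward closed'' and why admissibility is detected by the minimal bad sets, i.e.\ the hyperedges of~\eqref{eq:Gcom}. (Your fallback of deducing this from the ``universal role of $\bf HG$'' would be circular, though: that role is Theorem~\ref{thm:hyper}, which is itself proved from these same lemmas.) Your identity $c_x^y=\sum_k a_k(-1)^k k!=\chi_{G_x^y}(-1)$ is a correct restatement of $\sum_{A\in{\mathcal C}_x^y}(-1)^{\ell(A)}$, since ordering the $\ell(\rho)$ blocks of an admissible partition contributes the factor $\ell(\rho)!$.

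The genuine gap is the final step, and you have correctly located it but not closed it: invoking ``the hypergraph analogue of Stanley's $(-1)$-color theorem'' is invoking the theorem being proved. The entire content of Theorem~\ref{thm:antiH} is the construction that converts the alternating sum over ${\mathcal C}_x^y$ into the signed count $a(G_x^y)=\sum_{{\mathcal O}\in{\mathfrak O}_x^y}(-1)^{\ell(A_{\mathcal O})}$ of Theorem~\ref{thm:anticoco}. Concretely, what is missing is (i) the definition of an orientation of a hyperedge as an ordered splitting $(\mathfrak a,\mathfrak b)$ into head and tail, of the quotient $V/{\mathcal O}$ by the equivalence generated by heads, and of acyclicity of the induced oriented graph $G/{\mathcal O}$; (ii) the surjection $\Omega\colon{\mathcal C}_x^y\to{\mathfrak O}_x^y$ of Lemma~\ref{lem:Omega}, together with its canonical section ${\mathcal O}\mapsto A_{\mathcal O}$ built from sources of $G/{\mathcal O}_{i,\ell}$; and (iii) the sign-reversing involution on each fiber $\Omega^{-1}({\mathcal O})$ whose unique fixed point is $A_{\mathcal O}$. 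Your second suggested route (a sign-reversing involution on ordered independent partitions whose fixed points are the acyclic orientations) is indeed the paper's method, but naming the method is not the same as executing it: without the definition of acyclicity and the explicit pairing, there is no argument. Note also that the resulting formula is a genuinely \emph{signed} sum, since $(-1)^{\ell(A_{\mathcal O})}=(-1)^{|V/{\mathcal O}|}$ varies with the orientation; the uniform-sign Stanley picture only survives when every hyperedge has cardinality~$2$.
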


\begin{remark}
If $G_x^y$ is a graph, that is, any hyperedge $U\in G_x^y$ is such that $|U|=2$, then every acyclic orientation will have the same sign, as seen in Example~\ref{ex:graphdone}. Hence the theorem above gives a
cancelation free formula for the antipode as shown in \cite{Humpert-Martin}. In general it will not be cancelation free but it is the best generalization, to our knowledge,
for hypergraphs and to a large class of Hopf monoids and Hopf algebras. \end{remark}

\subsection{Structure of ${\mathcal C}_{x}^{y}$ and its hypergraph $G_x^y$}
Before we prove Theorem \ref{thm:antiH} we need to establish some properties of ${\mathcal C}_{x}^{y}=\big\{A\models I : x_A=y\big\}$. This will allow us to determine the coefficient of $y$ in $S(x)$ given by
\begin{equation} \label{eq:cxycocom}
 c_x^y=\sum_{A\in {\mathcal C}_{x}^{y}} (-1)^{\ell(A)}
 \end{equation}

\begin{lemma}\label{lem:minimalcom}
 If $A$ and $\Lambda$ in $ {\mathcal C}_{x}^{y}$ are two minimal set compositions under refinement, then $A$ is a permutation of the parts of $\Lambda$.
 Conversely, any set composition obtained by a permutation of the parts of $\Lambda$ belongs to ${\mathcal C}_{x}^{y}$ and is minimal.
\end{lemma}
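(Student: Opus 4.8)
The plan is to reduce everything to the underlying set partition of a composition and to show that ${\mathcal C}_{x}^{y}$ is closed under the meet (common refinement) operation of the partition lattice; the uniqueness assertion and its converse then both follow formally.

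First I would record the effect of commutativity and cocommutativity. Given a composition $A=(A_1,\ldots,A_k)$ with $\Delta_A(x)\neq 0$ and any reordering $A'$ of its parts, cocommutativity~\eqref{e:comm} shows that $\Delta_{A'}(x)$ is the corresponding permutation of $\Delta_A(x)=x_{A_1}\otimes\cdots\otimes x_{A_k}$, so in particular $\Delta_{A'}(x)\neq 0$, and commutativity~\eqref{e:comm} shows that $\mu_{A'}$ reassembles the same element, so $x_{A'}=x_A$. Hence membership in ${\mathcal C}_{x}^{y}$ and the length $\ell(A)$ depend only on the set partition underlying $A$, and I will freely pass between a composition and its partition.

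The heart of the argument is a closure lemma: if $A,\Lambda\in{\mathcal C}_{x}^{y}$ have underlying partitions $X=\{A_1,\ldots,A_k\}$ and $Y=\{\Lambda_1,\ldots,\Lambda_m\}$, then any ordering $C$ of the common refinement $X\wedge Y$ (the partition whose blocks are the nonempty $A_i\cap\Lambda_j$) again lies in ${\mathcal C}_{x}^{y}$. To prove this I would compute the restriction $y|_{A_i}$ of $y=\mu_A\Delta_A(x)$ in two ways using the compatibility axiom~\eqref{e:comp}, which makes $\Delta_{A_i,I\setminus A_i}$ distribute over products of elements supported on disjoint sets. On one hand $y=x_A=\prod_l x_{A_l}$ restricts to $x_{A_i}$ on $A_i$, since the factors on $A_l$ with $l\neq i$ restrict trivially. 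On the other hand $y=x_\Lambda=\prod_j x_{\Lambda_j}$ restricts to $\prod_j x_{\Lambda_j\cap A_i}=\prod_j x_{A_i\cap\Lambda_j}$, where coassociativity identifies $(x_{\Lambda_j})|_{\Lambda_j\cap A_i}=x_{\Lambda_j\cap A_i}$. Equating the two gives $x_{A_i}=\prod_j x_{A_i\cap\Lambda_j}$ for every $i$, and since $\bH$ is linearized every element displayed is a single nonzero basis element (so no intervening coproduct vanishes). Multiplying over $i$ and regrouping by commutativity then yields $x_C=\prod_{i,j}x_{A_i\cap\Lambda_j}=\prod_i x_{A_i}=x_A=y$, so $C\in{\mathcal C}_{x}^{y}$.

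Granting this closure lemma, the statement follows quickly. For uniqueness, suppose $A$ and $\Lambda$ are both minimal, with partitions $X,Y$. Ordering the common refinement $C$ so that each part of $A$ is a union of consecutive parts of $C$ gives $C\le A$ with $C\in{\mathcal C}_{x}^{y}$, so minimality of $A$ forces $C=A$, i.e.\ $X\wedge Y=X$; symmetrically $X\wedge Y=Y$, whence $X=Y$ and $A$ is a permutation of the parts of $\Lambda$. For the converse, any reordering $\Lambda'$ of $\Lambda$ lies in ${\mathcal C}_{x}^{y}$ by the first step, and it is minimal: a strict refinement $B'<\Lambda'$ in ${\mathcal C}_{x}^{y}$ would have partition strictly finer than $Y$, and reordering $B'$ to refine $\Lambda$ would produce an element of ${\mathcal C}_{x}^{y}$ strictly below $\Lambda$, contradicting its minimality. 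I expect the main obstacle to be the careful bookkeeping in the two-way restriction computation—justifying through~\eqref{e:comp} and coassociativity that $\Delta_{A_i,I\setminus A_i}$ distributes over the products $\prod_l x_{A_l}$ and $\prod_j x_{\Lambda_j}$ and that no coproduct vanishes—since this is exactly where the Hopf monoid axioms do the real work.
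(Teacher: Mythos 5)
Your proof is correct in substance and follows the same overall strategy as the paper's: both arguments first use (co)commutativity to reduce membership in ${\mathcal C}_{x}^{y}$ to the underlying set partition, and both then use the compatibility axiom to show that a suitable common refinement of $A$ and $\Lambda$ again lies in ${\mathcal C}_{x}^{y}$, after which minimality does the rest. The difference is in which refinement is produced. The paper cuts $\Lambda$ only by the single set $A_1$ (after arranging $\emptyset\ne A_1\cap\Lambda_1\ne\Lambda_1$), so each block $\Lambda_j$ is split into at most two pieces and the whole computation reduces to two-part coproducts, which are nonzero because $\Delta_{A_1,I\setminus A_1}(y)\ne 0$; this one strict refinement already contradicts the minimality of $\Lambda$. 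You instead pass to the full lattice meet $X\wedge Y$, which is cleaner and more symmetric (it gives both directions of the lemma at once and yields the pleasant identity $x|_{A_i}=\prod_j x|_{A_i\cap\Lambda_j}$), but it costs one step that your write-up glosses over: when a block $\Lambda_j$ meets three or more blocks of $A$, the nonvanishing of all the two-part cuts $\Delta_{\Lambda_j\cap A_i,\,\Lambda_j\setminus A_i}(x|_{\Lambda_j})$ does not by itself give the nonvanishing of the iterated coproduct $\Delta_C(x)$, which you need before you may write $x_C=\prod_{i,j}x|_{A_i\cap\Lambda_j}$. The gap is fillable with tools you already invoke: once $x|_{A_i}=\prod_j x|_{A_i\cap\Lambda_j}$ is established, the relation $\Delta_D\mu_D=\Id^{\otimes\ell(D)}$ (compatibility with $Q=R=\emptyset$, iterated) gives $\Delta_{D^{(i)}}(x|_{A_i})=\bigotimes_j x|_{A_i\cap\Lambda_j}\ne 0$ for $D^{(i)}$ the induced composition of $A_i$, hence $\Delta_C(x)=\bigl(\bigotimes_i\Delta_{D^{(i)}}\bigr)\Delta_A(x)\ne 0$ and your computation of $x_C$ is legitimate. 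With that sentence added your argument is complete; the paper's choice of cutting by one part at a time is precisely what lets it sidestep this point.
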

\begin{proof}
Given any $B=(B_1,B_2,\ldots, B_k)\in {\mathcal C}_{x}^{y}$ and any permutation $\sigma\colon[k]\to[k]$, we have that $\sigma(B):=(B_{\sigma(1)},\ldots,B_{\sigma(k)})\in {\mathcal C}_{x}^{y}$.
Indeed, this follow from commutativity and cocommutativity since $x_{\sigma(B)}=x_B=y$. Now if $\Lambda=(\Lambda_1,\ldots,\Lambda_m)\in {\mathcal C}_{x}^{y}$ is a minimal set composition under refinement, then $\sigma(\Lambda)$ is in ${\mathcal C}_{x}^{y}$ for any permutation $\sigma\colon[m]\to[m]$. Furthermore $\sigma(\Lambda)$ must be minimal under refinement for if $B<\sigma(\Lambda)$ such that $B\in {\mathcal C}_{x}^{y}$, then we can find a permutation $\tau$ such that $\tau(B)<\Lambda$ and $\tau(B)\in {\mathcal C}_{x}^{y}$. This would contradict the minimality of $\Lambda$.
This shows the conversely part of the lemma.

Now consider $A=(A_1,\ldots,A_\ell)\in {\mathcal C}_{x}^{y}$ another minimal set composition under refinement. Assume that $A\ne\sigma(\Lambda)$ for any $\sigma$. 
We claim that, there is a rearrangement of the parts of $\Lambda$ and $A$ such that $\emptyset\ne U_1= A_1\cap \Lambda_1\ne \Lambda_1$. 
If not, then for all $i,j$ such that $A_i\cap\Lambda_j\ne\emptyset$ we would have  $A_i\cap\Lambda_j=\Lambda_j$ and this would implies that a permutation of $\Lambda$ is a refinement of $A$, a contradiction. 
We can further rearrange the parts of $\Lambda$ such that $U_i=A_1\cap \Lambda_i\ne\emptyset$ for $1\le i\le r$ and $A_1\cap \Lambda_i=\emptyset$ for $r<i\le m$.
As in Equation~\eqref{eq:Tcutx}, for  $T=A_2\cup\cdots\cup A_\ell$ we have
\begin{equation} \label{eq:Mincutx}
    y=x_A=\mu_{A_1,T}\Delta_{A_1,T}(x_A).
   \end{equation}
Let $V_i=\Lambda_i\setminus U_i=\Lambda_i\cap T$ for $1\le i\le r$.
As in the proof of Lemma~\ref{le:min}, we claim that the set composition 
  $$C=(U_1,V_1,\ldots,U_r,V_r,\Lambda_{r+1}\ldots,\Lambda_m)<\Lambda,$$
(we remove any occurrence of $\emptyset$ parts), and $C$ belong to  ${\mathcal C}_{x}^{y}$. The refinement is strict since $U_1\ne\Lambda_1$ and this contradicts the minimality of $\Lambda$, hence no such $A$ may exists.

To show our last claim, we apply Equation~\eqref{eq:Mincutx} to $x_\Lambda=y=x_A$. Let $\Lambda_{i\cdot\cdot  m}=\Lambda_i\cup\Lambda_{i+1}\cup\cdots\cup \Lambda_m$,
$U_{i\cdot\cdot r}= A_1\cap \Lambda_{i\cdot\cdot  m}$ and $T_{i\cdot\cdot m}= T\cap \Lambda_{i\cdot\cdot  m}$ we have
\begin{equation}\label{eq:ALcutx}
     y=\mu_{A_1,T}\Delta_{A_1,T}(x_\Lambda)=\mu_{A_1,T}\Delta_{A_1,T}\mu_{\Lambda_1,\Lambda_{2\cdot\cdot m}}(\Id_{\Lambda_1}\otimes \mu_{(\Lambda_2,\ldots,\Lambda_m)})\Delta_\Lambda(x).
        \end{equation}
We now apply the Relation~\eqref{e:comp}, associativity and commutativity to obtain
$$ \begin{aligned}
     \mu_{A_1,T}&\Delta_{A_1,T}\mu_{\Lambda_1,\Lambda_{2\cdot\cdot m}} = \\
            &= \mu_{A_1,T}(\mu_{U_1,U_{2\cdot\cdot r}}\otimes \mu_{V_1,T_{2\cdot\cdot m}})(\Id_{U_1}\otimes\tau_{V_1,U_{2\cdot\cdot r}}\otimes \Id_{T_{2\cdot\cdot m}})
                           (\Delta_{U_1,V_1}\otimes \Delta_{U_{2\cdot\cdot r},T_{2\cdot\cdot m}})\\
            &= \mu_{U_1,V_1,U_{2\cdot\cdot r},T_{2\cdot\cdot m}} (\Delta_{U_1,V_1}\otimes \Delta_{U_{2\cdot\cdot r},T_{2\cdot\cdot m}})\\
            &= \mu_{\Lambda_1,\Lambda_{2\cdot\cdot m}} \big( \mu_{U_1,V_1} \Delta_{U_1,V_1}\otimes  \mu_{U_{2\cdot\cdot r},T_{2\cdot\cdot m}}\Delta_{U_{2\cdot\cdot r},T_{2\cdot\cdot m}}\big).\\
    \end{aligned}$$
Putting this back in Equation~\eqref{eq:ALcutx} we get
  $$\begin{aligned}
    y&=\mu_{\Lambda_1,\Lambda_{2\cdot\cdot m}} \big( \mu_{U_1,V_1} \Delta_{U_1,V_1}\otimes  \mu_{U_{2\cdot\cdot r},T_{2\cdot\cdot m}}\Delta_{U_{2\cdot\cdot r},T_{2\cdot\cdot m}}\big)(\Id_{\Lambda_1}\otimes \mu_{(\Lambda_2,\ldots,\Lambda_m)})\Delta_\Lambda(x)\\
      &=\mu_{\Lambda_1,\Lambda_{2\cdot\cdot m}} \big( \mu_{U_1,V_1} \Delta_{U_1,V_1}\otimes  \mu_{U_{2\cdot\cdot r},T_{2\cdot\cdot m}}\Delta_{U_{2\cdot\cdot r},T_{2\cdot\cdot m}}  \mu_{(\Lambda_2,\ldots,\Lambda_m)}\big)\Delta_\Lambda(x).\\
      \end{aligned}
    $$
 If $r=1$, then $U_{2\cdot\cdot r}=\emptyset$ and $\mu_{U_{2\cdot\cdot r},T_{2\cdot\cdot m}}=\Delta_{U_{2\cdot\cdot r},T_{2\cdot\cdot m}}=\Id_{T_{2\cdot\cdot m}}$.
 In this case we get
    $$ \begin{aligned}
    y &=\mu_{\Lambda_1,\Lambda_{2\cdot\cdot m}} \big( \mu_{U_1,V_1} \Delta_{U_1,V_1}\otimes   \mu_{(\Lambda_2,\ldots,\Lambda_m)}\big)\Delta_\Lambda(x)\\
      &=\mu_{\Lambda_1,\Lambda_{2\cdot\cdot m}} ( \mu_{U_1,V_1}\otimes   \mu_{(\Lambda_2,\ldots,\Lambda_m)})(  \Delta_{U_1,V_1}\otimes  \Id_{\Lambda_2}\otimes\cdots\otimes\Id_{\Lambda_m} )\Delta_\Lambda(x)\\
      &=\mu_C\Delta_C(x)=x_C
      \end{aligned}$$
If $r>1$,  then we repeat the process above with  $ \mu_{U_{i\cdot\cdot r},T_{i\cdot\cdot m}}\Delta_{U_{i\cdot\cdot r},T_{i\cdot\cdot m}}  \mu_{(\Lambda_i,\ldots,\Lambda_m)}$ for $2\le i\le r$ and  we obtain
$y=x_C$ in all cases. This shows that $C\in {\mathcal C}_{x}^{y}$ contradicting the minimality of $\Lambda$.
\end{proof}

We now consider the analogue to Lemma~\ref{le:lowerideal} and Lemma~\ref{le:upperideal} for $\bf H$.

\begin{lemma}\label{le:Hlowerideal}
If ${\mathcal C}_{x}^{y}\ne\emptyset$, then for any $A\in{\mathcal C}_{x}^{y}$ and $\Lambda\in{\mathcal C}_{x}^{y}$ minimal, we have that 
$[\Lambda,A]\subseteq {\mathcal C}_{x}^{y}$.
\end{lemma}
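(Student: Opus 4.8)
The plan is to adapt the proof of Lemma~\ref{le:lowerideal} to the present commutative and cocommutative setting. The first thing I would do is dispose of a case that is automatic there but genuine here: if $\Lambda\not\le A$, then the interval $[\Lambda,A]$ is empty and there is nothing to prove. This case really must be addressed, because by Lemma~\ref{lem:minimalcom} the minimal elements of ${\mathcal C}_{x}^{y}$ are unique only up to permutation of their parts, so a given minimal $\Lambda$ need not refine a given $A$ (in contrast to the $\bL\times\bH$ setting, where $\Lambda$ is the unique minimum and hence refines every member of ${\mathcal C}_{\alpha,x}^{\beta,y}$). From now on I assume $\Lambda\le A$ and proceed by induction on $r=\ell(\Lambda)-\ell(A)\ge 0$.

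The base case $r=0$ forces $A=\Lambda\in{\mathcal C}_{x}^{y}$, since $\Lambda\le A$ with equally many parts means $\Lambda=A$. For the inductive step it suffices to show that every $B$ covered by $A$ with $\Lambda\le B$ already lies in ${\mathcal C}_{x}^{y}$: granting this, the induction hypothesis gives $[\Lambda,B]\subseteq{\mathcal C}_{x}^{y}$ for each such $B$, and since every element of $[\Lambda,A]$ distinct from $A$ lies in some such $[\Lambda,B]$ (pick a $B$ with $C\le B$ covered by $A$; then $\Lambda\le C\le B$), the conclusion follows. So fix such a $B$. Then $A$ is obtained from $B$ by merging two consecutive parts, say $A=(B_1,\ldots,B_{i-1},B_i\cup B_{i+1},B_{i+2},\ldots)$, and because $\le$ is refinement of \emph{ordered} set compositions, $\Lambda\le B$ yields a prefix identity $P:=B_1\cup\cdots\cup B_i=\Lambda_1\cup\cdots\cup\Lambda_j$ for a unique $j$. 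Put $Q=I\setminus P$.

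The key computation is then formally identical to the ones in~\eqref{eq:Tcutx} and~\eqref{eq:TcutS}: using associativity to factor $\mu_\Lambda$ through $\mu_{P,Q}$, and the compatibility relation~\eqref{e:comp} with the two middle pieces empty, which gives $\Delta_{P,Q}\mu_{P,Q}=\Id$, one obtains
$$y=\mu_\Lambda\Delta_\Lambda(x)=\mu_{P,Q}\Delta_{P,Q}(y)=\mu_{P,Q}\Delta_{P,Q}\mu_A\Delta_A(x)=\mu_B\Delta_B(x)=x_B,$$
so that $B\in{\mathcal C}_{x}^{y}$, completing the induction. I expect the only real obstacle to be the bookkeeping in the penultimate equality, namely verifying that the $(P,Q)$-cut of $x_A$ reassembles exactly into the refinement of $A$ into $B$; this is the same compatibility-and-(co)associativity manipulation carried out in full detail in the proof of Lemma~\ref{le:min}, and it transfers here verbatim. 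It is worth emphasizing that commutativity and cocommutativity are \emph{not} used in this lemma itself—they enter only through Lemma~\ref{lem:minimalcom}, which is what allows us to speak of a minimal $\Lambda$ in the first place and is precisely what makes the preliminary $\Lambda\not\le A$ case necessary.
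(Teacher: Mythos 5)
Your proposal is correct and follows exactly the route the paper takes: it dismisses the case $[\Lambda,A]=\emptyset$ as trivial (the paper does the same, and this is indeed the only new wrinkle caused by the non-uniqueness of minimal elements from Lemma~\ref{lem:minimalcom}), and when $\Lambda\le A$ it reruns the induction on $\ell(\Lambda)-\ell(A)$ from Lemma~\ref{le:lowerideal} verbatim, including the $\mu_{P,Q}\Delta_{P,Q}$ computation via~\eqref{eq:Tcutx} and~\eqref{eq:TcutS}. Your closing observation that commutativity and cocommutativity are not needed in the argument itself is also accurate.
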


If $[\Lambda,A]=\emptyset$, then the Lemma is trivially true. If $\Lambda\le A$, then the proof of the above lemma is exactly as in Lemma~\ref{le:lowerideal}.
This shows that ${\mathcal C}_{x}^{y}$ is a lower ideal in the order $\bigcup_{\sigma\in S_m} [\sigma\Lambda,(I)]$. The next lemma shows us how ${\mathcal C}_{x}^{y}$
is cut out of $\bigcup_{\sigma\in S_m} [\sigma\Lambda,(I)]$

\begin{lemma}\label{le:Hupperideal}
The minimal elements of $\big(\bigcup_{\sigma\in S_m} [\sigma\Lambda,(I)]\big)\setminus {\mathcal C}_{x}^{y}$ are all the  permutations of set compositions of the form
 $$(\bigcup_{i\in U} \Lambda_i,\Lambda_{v_1},\Lambda_{v_2},\ldots,\Lambda_{v_r})$$
for some $U\in\{1,2,\ldots,m\}$, where $r=m-|U|$ and $\{v_1,\ldots,v_r\}=I\setminus U$. 
\end{lemma}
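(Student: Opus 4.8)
The plan is to mirror the treatment of the case $x_B\ne y$ in the proof of Lemma~\ref{le:upperideal}, replacing ``union of consecutive parts of $\Lambda$'' by ``union of parts of $\Lambda$'' and tracking the reordering that commutativity allows. I would start with a minimal element $B=(B_1,\dots,B_k)$ of $\big(\bigcup_{\sigma\in S_m}[\sigma\Lambda,(I)]\big)\setminus{\mathcal C}_{x}^{y}$. Since $B\ge\sigma\Lambda$ for some $\sigma$, each part $B_s$ is a union $\bigcup_{i\in U_s}\Lambda_i$ for pairwise disjoint index sets $U_s\subseteq[m]$ with $\bigcup_s U_s=[m]$. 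If every $U_s$ were a singleton, then $B$ would be a permutation of $\Lambda$, hence $B\in{\mathcal C}_{x}^{y}$ by Lemma~\ref{lem:minimalcom}, contradicting $B\notin{\mathcal C}_{x}^{y}$; so at least one $U_s$ has $|U_s|\ge 2$, and the goal is to show that exactly one does.

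To derive a contradiction from the assumption that two parts, indexed by $U_{s_1}$ and $U_{s_2}$, satisfy $|U_{s_j}|\ge 2$, I would introduce for each $s$ the set composition $C_{(s)}$ obtained from $B$ by leaving $B_s$ intact and replacing every other part $B_t$ by its constituents $\{\Lambda_i:i\in U_t\}$, listed consecutively so that the refinement $C_{(s)}\le B$ holds literally in the poset and $C_{(s)}\in\bigcup_\sigma[\sigma\Lambda,(I)]$. Since there are at least two nontrivial parts, for every $s$ some nontrivial $B_t$ gets split, so $C_{(s)}<B$ strictly; by minimality of $B$ in the complement this forces $C_{(s)}\in{\mathcal C}_{x}^{y}$, that is $x_{C_{(s)}}=y$.

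I would then apply $\Delta_{C_{(s)}}$ to $x_{C_{(s)}}=y$. Using the compatibility relation with $Q=R=\emptyset$ exactly as in Lemma~\ref{le:min}, so that $\Delta_{C_{(s)}}\mu_{C_{(s)}}$ acts as the identity on the relevant tensor factors, the left side becomes $\Delta_{C_{(s)}}(x)$, whose factor on $B_s$ is the genuine restriction $x_{\cup_{i\in U_s}\Lambda_i}$ and whose factors on the singletons $\Lambda_j$ ($j\notin U_s$) are $x_{\Lambda_j}$. Since $\Lambda\le C_{(s)}$ and $\Delta_\Lambda(x)=x_{\Lambda_1}\otimes\cdots\otimes x_{\Lambda_m}$ by Lemma~\ref{lem:minimalcom}, the same manipulations applied to $y=x_\Lambda$ give the $B_s$-factor of $\Delta_{C_{(s)}}(y)$ as $\prod_{i\in U_s}x_{\Lambda_i}$, with the remaining factors again $x_{\Lambda_j}$. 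Comparing the $B_s$-slots yields $x_{\cup_{i\in U_s}\Lambda_i}=\prod_{i\in U_s}x_{\Lambda_i}$, and this holds for every $s$ (trivially for singleton $U_s$). Multiplying over all parts and using commutativity gives $x_B=\prod_s x_{B_s}=\prod_{i=1}^m x_{\Lambda_i}=x_\Lambda=y$, contradicting $B\notin{\mathcal C}_{x}^{y}$. Hence exactly one $U_s=:U$ is nontrivial, so $B$ is a permutation of $(\bigcup_{i\in U}\Lambda_i,\Lambda_{v_1},\dots,\Lambda_{v_r})$, as claimed.

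Finally I would record which $U$ actually occur, to justify the phrase ``are all''. Restricting both $x_B$ and $y$ to $\bigcup_{i\in U}\Lambda_i$ via $\Delta_{\cup U,\,I\setminus\cup U}$ and using cocommutativity gives $x_B|_{\cup U}=x_{\cup_{i\in U}\Lambda_i}$ and $y|_{\cup U}=\prod_{i\in U}x_{\Lambda_i}$, so $B\notin{\mathcal C}_{x}^{y}$ is equivalent to $x_{\cup_{i\in U}\Lambda_i}\ne\prod_{i\in U}x_{\Lambda_i}$, and $B$ is minimal in the complement precisely when $U$ is minimal with this property, i.e.\ when $U$ is a hyperedge of $G_x^y$ in the sense of~\eqref{eq:Gcom}. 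The only delicate point is bookkeeping: one must order the constituents inside each $C_{(s)}$ so that $C_{(s)}\le B$ truly holds in the poset of set compositions, after which all the algebra is identical to Lemmas~\ref{le:min} and~\ref{le:upperideal}; everything else is a routine telescoping of products in the commutative monoid $\bh$.
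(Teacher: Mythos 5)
Your proposal is correct and follows essentially the same route as the paper, which itself just defers to the second half of the proof of Lemma~\ref{le:upperideal}: assume two parts of $B$ are nontrivial unions of parts of $\Lambda$, form the compositions $C_{(s)}$ that keep $B_s$ intact and split everything else, invoke minimality of $B$ to place each $C_{(s)}$ in ${\mathcal C}_x^y$, and compare coproducts to force $x_B=y$, a contradiction. Your added bookkeeping (ordering the constituents so that $C_{(s)}\le B$ holds in the refinement poset) and the closing identification of the admissible $U$ with the hyperedges of $G_x^y$ are details the paper leaves implicit, but they match its intent.
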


The proof of this lemma is a direct adaptation of the proof of Lemma~\ref{le:upperideal}. It is clear that the upper ideal $\big(\bigcup_{\sigma\in S_m} [\sigma\Lambda,(I)]\big)\setminus {\mathcal C}_{x}^{y}$ is invariant under permutations. Let $B\in \big(\bigcup_{\sigma\in S_m} [\sigma\Lambda,(I)]\big)\setminus {\mathcal C}_{x}^{y}$ be minimal. If $B$ has more than two parts that are not single parts of $\Lambda$, then let $\sigma\in S_m$ be such that $\sigma\Lambda<B$ and proceed as in the second part of the proof on Lemma~\ref{le:upperideal} to reach a contradiction.

We now define the hypergraph $G_x^y$ associated with ${\mathcal C}_{x}^{y}$. For fixed $x$ and $y$, Lemma~\ref{le:Hupperideal} gives us a set of subsets $U\subseteq I$ defining ${\mathcal C}_{x}^{y}$.
  $$G_x^y=\{U\subseteq I : \text{ $U$ minimal, } \prod_{i\in U} x_{\Lambda_i} \ne x_{\bigcup_{i\in U} \Lambda_i}\} $$
In general, $G_x^y$ is a special hypergraph where all edges have cardinality at least $2$ and if $U\in G_x^y$ then for all $U\subset V\subseteq I$, we have $V\not\in G_x^y$. this second property follows from the minimality of the element of $\big(\bigcup_{\sigma\in S_m} [\sigma\Lambda,(I)]\big)\setminus {\mathcal C}_{x}^{y}$. The hypergraph $G_x^y$ is as defined in Equation~\eqref{eq:Gcom}.

\begin{example}\label{ex:hypergraph}
Let $\bf HG$ be as in Section~\ref{ss:hypergraph}. Consider $I= \{a,b,c,d,e\}$ and pick $x=\big\{\{b,c\},\{a,b,e\},\{a,d,e\},\{b,c,e\}\big\}$ and $y=\big\{
\{b,c\}\big\}$ in ${\bf hg}[I]$. We can represent $x$ and $y$ as follows:
$$
x=
\begin{tikzpicture}[scale=.7,baseline=.5cm]
	\node (a) at (0,0) {$\scriptstyle a$};
	\node (b) at (1,.1) {$\scriptstyle b$};
	\node (c) at (2,0) {$\scriptstyle c$};
	\node (d) at (-.5,1) {$\scriptstyle d$};
	\node (e) at (.6,1.2) {$\scriptstyle e$};
	\draw [fill=gray] (.1,.1) .. controls (.5,.3) .. (.9,.2) .. controls (.65,.6) .. (.6,1) .. controls (.5,.5) .. (.1,.1) ; 
	\draw [fill=gray] (0,.15) .. controls (-.05,.6) .. (-.35,1) .. controls (.1,.8) .. (.5,1) .. controls (.1,.6) .. (0,.15) ; 
	\draw [fill=gray] (1.8,.1) .. controls (1.5,.2) .. (1.1,.2) .. controls (1.1,.5) .. (.7,1) .. controls (1.4,.3) .. (1.8,.1) ; 
	\draw (b) .. controls (1.5,-.2) ..   (c); 
\end{tikzpicture} 
\qquad \qquad
y=
\begin{tikzpicture}[scale=.7,baseline=.5cm]
	\node (a) at (0,0) {$\scriptstyle a$};
	\node (b) at (1,.1) {$\scriptstyle b$};
	\node (c) at (2,0) {$\scriptstyle c$};
	\node (d) at (-.5,1) {$\scriptstyle d$};
	\node (e) at (.6,1.2) {$\scriptstyle e$};
	\draw (b) .. controls (1.5,-.2) ..   (c); 
\end{tikzpicture} 
$$
Up to permutation, the minimum refinement of ${\mathcal C}_x^y$
is $\Lambda=(a,bc,d,e)$. Since $\Lambda$ has $4$ parts, the hypergraph $G_x^y$ is build on the set $\{1,2,3,4\}$.
We have that $x_{bc}x_{e}\ne x_{bce}$ and $x_{a}x_dx_e\ne x_{ade}$. Those are the only minimal coarsening of parts of $\Lambda$ that yield
such inequalities. Hence $G_x^y=\big\{\{1,3,4\},\{2,4\}\big\}$. We represent this as follows:
$$G_x^y=
\begin{tikzpicture}[scale=.7,baseline=.5cm]
	\node (1) at (0,.1) {$\scriptstyle 1$};
	\node (2) at (2.5,.5) {$\scriptstyle 2$};
	\node (3) at (0,1) {$\scriptstyle 3$};
	\node (4) at (1,.5) {$\scriptstyle 4$};
	\draw [fill=gray] (.1,.1) .. controls (.25,.5) .. (.15,.9) .. controls (.4,.5) .. (.85,.5) .. controls (.5,.4) .. (.1,.1) ; 
	\draw (4).. controls (1.8,.6) ..(2);
\end{tikzpicture} 
$$
We now identify the set compositions in  $\bigcup_{\sigma\in S_m} [\sigma\Lambda,(I)]$ with the set compositions in $\bigcup_{\sigma\in S_m}[(\sigma(1),\ldots,\sigma(m)),(12\cdots m)]$. There are $4!$ minimal elements with four parts. There are 30 compositions with 3 parts, namely all the permutation of $(12,3,4),(13,2,4),(14,2,3),(23,1,4),(34,1,2)$.
We have removed here all the permutations of $\red{\underline{(24,1,3)}}$. With 2 parts we have all the  permutations of $(123,4), (12,34),(14,23)$ for a total of 6. We have removed the permutations of 
$\red{\underline{(134,2)}}$ and all the coarsenings of permutations of $\red{\underline{(24,1,3)}}$.  Here
  $$c_x^y=24-30+6=0$$
\end{example}

The identification between $\bigcup_{\sigma\in S_m} [\sigma\Lambda,(I)]$ with $\bigcup_{\sigma\in S_m}[(\sigma(1),\ldots,\sigma(m)),(12\cdots m)]$ shows that computing $c_x^y$ is equivalent 
to computing the coefficient of $\epsilon$, the hypergraph on $[m]$ with no edges, in the antipode of $G_x^y$ in the Hopf monoid of hypergraphs. This implies the following theorem:
\begin{theorem}\label{thm:hyper}
  Given a commutative and cocommutative linearized Hopf monoid {\bf H}, let $x,y\in {\bf h}[I]$. We have\footnote{The reader should be aware of the abuse of notation here: on one hand $c_x^y$ is an antipode coefficient in the Hopf monoid {\bf H}, on the other hand $c_{x/y}^\epsilon $ is an antipode coefficient in the Hopf monoid {\bf HG}.}
    $$ c_x^y = c_{x/y}^\epsilon $$
     where $\epsilon$ is the hypergraph on $[m]$ with no edges and $x/y=G_x^y$ is the hypergraph given in~\eqref{eq:Gcom}.
\end{theorem}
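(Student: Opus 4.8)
The plan is to read off both sides as signed counts of set compositions of $[m]$ satisfying the \emph{same} combinatorial condition, using the explicit description of the Hopf monoid $\mathbf{HG}$ from Section~\ref{ss:hypergraph} together with Lemmas~\ref{le:Hlowerideal} and~\ref{le:Hupperideal}.

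First I would unwind the right-hand side. Applying Takeuchi's formula~\eqref{eq:takeuchi} in $\mathbf{HG}$ to $G=G_x^y$ on the vertex set $[m]$, for a set composition $A=(A_1,\dots,A_k)\models[m]$ the coproduct-then-product gives $\mu_A\Delta_A(G)=\{U\in G: U\subseteq A_j\text{ for some }j\}$, since the coproduct restricts each part via $h|_{A_j}=\{U\in h:U\subseteq A_j\}$ and the product is union. Hence $\mu_A\Delta_A(G)=\epsilon$ (the hypergraph with no edges) precisely when no hyperedge of $G_x^y$ is contained in a single part of $A$. Writing ${\mathcal C}_{x/y}^\epsilon=\{A\models[m]:\mu_A\Delta_A(G_x^y)=\epsilon\}$, we get $c_{x/y}^\epsilon=\sum_{A\in{\mathcal C}_{x/y}^\epsilon}(-1)^{\ell(A)}$, the sum running over exactly those $A$ whose parts cut every hyperedge of $G_x^y$.

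Next I would identify the left-hand side with a signed count over the same set. By Lemma~\ref{le:Hlowerideal}, ${\mathcal C}_{x}^{y}$ is a lower ideal of $\bigcup_{\sigma\in S_m}[\sigma\Lambda,(I)]$, and under the order-and-length-preserving identification $\Lambda_i\leftrightarrow i$ this poset becomes the full poset of set compositions of $[m]$. By Lemma~\ref{le:Hupperideal}, the minimal elements of the complement are the permutations of $(\bigcup_{i\in U}\Lambda_i,\Lambda_{v_1},\dots,\Lambda_{v_r})$ as $U$ ranges over the hyperedges of $G_x^y$ from~\eqref{eq:Gcom}. Since ${\mathcal C}_{x}^{y}$ is a lower ideal, a composition $A$ fails to lie in it exactly when it lies above one of these minimal elements, i.e. exactly when some part of $A$ contains $\bigcup_{i\in U}\Lambda_i$ for some hyperedge $U$. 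Under $\Lambda_i\leftrightarrow i$ this says precisely that some part of $A$, viewed as a subset of $[m]$, contains a hyperedge of $G_x^y$; equivalently, $A\in{\mathcal C}_{x}^{y}$ iff no part of $A$ contains a hyperedge of $G_x^y$.

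Comparing the two descriptions, the condition defining ${\mathcal C}_{x}^{y}$ (no part of $A$ contains a hyperedge of $G_x^y$) coincides verbatim with the condition defining ${\mathcal C}_{x/y}^\epsilon$ (every hyperedge of $G_x^y$ is cut by $A$). Since the identification $\Lambda_i\leftrightarrow i$ preserves $\ell(A)$, the two signed sums in~\eqref{eq:cxycocom} agree term by term, yielding $c_x^y=c_{x/y}^\epsilon$. The only genuinely delicate point is the second step: one must verify that the lower-ideal structure of Lemma~\ref{le:Hlowerideal}, combined with the exact list of minimal obstructions in Lemma~\ref{le:Hupperideal}, forces membership in ${\mathcal C}_{x}^{y}$ to be governed solely by the edges of $G_x^y$ — in particular that lying above a minimal obstruction is the same as having a part that swallows a hyperedge. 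The remaining bookkeeping (the Takeuchi unwinding and the length preservation under the identification) is routine.
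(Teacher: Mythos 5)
Your argument is correct and is essentially the paper's own (the paper proves this theorem only by the one-line remark preceding it, that the identification of $\bigcup_{\sigma\in S_m}[\sigma\Lambda,(I)]$ with the set compositions of $[m]$, together with Lemmas~\ref{le:Hlowerideal} and~\ref{le:Hupperideal}, reduces $c_x^y$ to the coefficient of $\epsilon$ in $S(G_x^y)$ in $\mathbf{HG}$). You have simply filled in the same reduction explicitly, including the Takeuchi unwinding on the $\mathbf{HG}$ side, so there is nothing to correct.
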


\subsection{A different formula for $c_x^y$}\label{ss:antiHwithLxH} This Section is not needed for the proof of Theorem~\ref{thm:antiH}. We only present it as an application of Section~\ref{s:antiLxH}.
Using Remark~\ref{rem:coefcomp} we now give a more efficient  formula to compute
$c_x^y$. For this we  decompose the order ${\mathcal C}_{x}^{y}$ into disjoint suborders, one for each permutation of $S_m$.
As before we identify ${\mathcal C}_{x}^{y}$ with a lower ideal of $\bigcup_{\sigma\in S_m}[\sigma,(12\cdots m)]$.
Given $A=(A_1,A_2,\ldots,A_\ell)\in {\mathcal C}_{x}^{y}$, we obtain a unique refinement $\sigma(A)<A$ by ordering increasingly each  of the $A_i$ and then splitting them into singletons. For example if $A=(\{2,5,7\},\{1\},\{3,4,9\},\{6,8\})$, then $\sigma(A)=(2,5,7,1,3,4,9,6,8)$.
Let
  $${\mathcal C}_{x,\tau}^{y} = \big\{ A\in {\mathcal C}_{x}^{y} : \sigma(A)=\tau \big\}.$$
With these definitions in mind we state the following proposition.
\begin{proposition}\label{lem:Cxydecomp} For any $x,y\in {\bf h}[I]$ such that ${\mathcal C}_{x}^{y}\ne\emptyset$, let $\Lambda=(\Lambda_1,\ldots,\Lambda_m)\in {\mathcal C}_{x}^{y}$ be a fixed minimal element. We have that
 $$ c_{x}^{y} = \sum_{\tau\in S_m} c(G_{12\cdots m,x/y}^{\tau,\epsilon}) $$
 where $\epsilon$ is the hypergraph on $\{1,2,\ldots,m\}$ with no edges and $x/y=G_x^y$ is the hypergraph given in Equation~\eqref{eq:Gcom}.
\end{proposition}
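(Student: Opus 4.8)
The plan is to refine the identity $c_x^y=c_{x/y}^\epsilon$ of Theorem~\ref{thm:hyper} by grading it according to the linear order produced by $\sigma$, and then to recognize each graded piece as an antipode coefficient in $\mathbf L\times\mathbf{HG}$. First I would record the disjoint decomposition $\mathcal C_{x}^{y}=\bigsqcup_{\tau\in S_m}\mathcal C_{x,\tau}^{y}$, which is immediate since $\sigma$ assigns to each $A\in\mathcal C_{x}^{y}$ a single linear order $\sigma(A)=\tau\in S_m$. Setting $c_{x,\tau}^{y}=\sum_{A\in\mathcal C_{x,\tau}^{y}}(-1)^{\ell(A)}$, this gives at once
$$ c_{x}^{y}=\sum_{\tau\in S_m}c_{x,\tau}^{y}, $$
so the whole statement reduces to proving $c_{x,\tau}^{y}=c(G_{12\cdots m,x/y}^{\tau,\epsilon})$ for each fixed $\tau$.

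For this key equality I would reuse the length-preserving identification $A\leftrightarrow\bar A$ that was set up just before Theorem~\ref{thm:hyper}: here $A_j=\bigcup_{i\in\bar A_j}\Lambda_i$ turns a set composition $\bar A\models[m]$ into an element of $\bigcup_{\sigma\in S_m}[\sigma\Lambda,(I)]$, and under it $\mathcal C_{x}^{y}$ corresponds exactly to $\mathcal C_{x/y}^{\epsilon}=\{\bar A\models[m]: (x/y)_{\bar A}=\epsilon\}$ in $\mathbf{HG}$. It then remains to match the two statistics on $\bar A$ coming from the two tensor factors of $\mathbf L\times\mathbf{HG}$. On the $\mathbf L$ side, $(12\cdots m)_{\bar A}$ is the concatenation of the restrictions $(12\cdots m)|_{\bar A_j}$, i.e.\ the increasing listing of each block $\bar A_j$ followed by concatenation, which is exactly $\sigma(\bar A)$; this is precisely where the choice $\alpha=12\cdots m$ is used. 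On the $\mathbf{HG}$ side, $(x/y)_{\bar A}=\bigcup_j (x/y)|_{\bar A_j}$ is empty iff no hyperedge of $x/y=G_x^y$ lies inside a single block of $\bar A$, which is membership in $\mathcal C_{x/y}^{\epsilon}$. Combining the two conditions yields
$$ \mathcal C_{12\cdots m,x/y}^{\tau,\epsilon}=\big\{\bar A\in\mathcal C_{x/y}^{\epsilon}:\sigma(\bar A)=\tau\big\}, $$
which is exactly the image of $\mathcal C_{x,\tau}^{y}$ under the identification.

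Since the identification preserves length, summing signs over these equal index sets gives $c_{x,\tau}^{y}=c_{12\cdots m,x/y}^{\tau,\epsilon}$, and by the definition $c(G_{\alpha,x}^{\beta,y}):=c_{\alpha,x}^{\beta,y}$ from Section~\ref{ss:proof1} (applied with the linearized Hopf monoid $\mathbf{HG}$, so that Theorem~\ref{thm:antiLxP} governs these coefficients) this equals $c(G_{12\cdots m,x/y}^{\tau,\epsilon})$. Summing over $\tau\in S_m$ then yields the proposition. I expect the only delicate point to be the bookkeeping in the matching step — verifying that $(12\cdots m)_{\bar A}=\sigma(\bar A)$ and that the $\mathbf{HG}$ product and coproduct really recover the same lower ideal $\mathcal C_{x/y}^{\epsilon}$ already used for Theorem~\ref{thm:hyper} — after which the signed sums agree term by term.
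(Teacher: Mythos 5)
Your proposal is correct and follows essentially the same route as the paper: the paper's proof likewise decomposes ${\mathcal C}_{x}^{y}=\biguplus_{\tau\in S_m}{\mathcal C}_{x,\tau}^{y}$ and then observes that $A\in{\mathcal C}_{x,\tau}^{y}$ if and only if $\tau=(12\cdots m)|_{A}$ and $G_x^y|_A=\epsilon$, so that the signed sum over each piece is the coefficient of $(\tau,\epsilon)$ in $S(12\cdots m,\,x/y)$ computed in $\mathbf{L}\times\mathbf{HG}$. You simply make explicit the length-preserving identification with set compositions of $[m]$ and the matching of the two statistics, which the paper leaves implicit.
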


\begin{proof} From the definition above, it is clear that ${\mathcal C}_{x}^{y} = \biguplus_{\tau\in S_m} {\mathcal C}_{x,\tau}^{y}$. For a fixed $\tau$, we have that $A\in {\mathcal C}_{x,\tau}^{y}$ if and only if
 $$\tau=(12\cdots m)|_{A} \qquad \text{  and } \qquad G_x^y\big|_A=\epsilon.$$
This gives
  $$\sum_{A\in {\mathcal C}_{x,\tau}^{y}} (-1)^{\ell(A)} = c(G_{12\cdots m,x/y}^{\tau,\epsilon}) $$
where $c(G_{12\cdots m,x/y}^{\tau,\epsilon})$ is the coefficient of $(\tau,\epsilon)$ in the expansion of $S(12\cdots m,x/y)$ for the Hopf monoid ${\bf L}\times {\bf HG}$ with $\bf HG$ as defined in Scetion~\ref{ss:hypergraph}.
\end{proof}

Theorem~\ref{thm:antiLxP} gives us that $c(G_{12\cdots m,x/y}^{\tau,\epsilon})$ is $0$ or $\pm 1$. Proposition~\ref{lem:Cxydecomp} gives us an interesting
new way to compute antipode, as a sum over permutations instead of a sum of set compositions. 
\begin{example}\label{ex:twotriangles} We compute the coefficient of the hypergraph $\epsilon$ in the antipode $S(x)$ of the hypergraph $x=\big\{\{1,2,4\},\{2,3,4\}\big\}\in\bf{HG}[4]$. Let $\tau=1243$ and recall the construction of the graph $G=G_{1234,x/\epsilon}^{\tau,\epsilon}$  as in Example~\ref{ex:LxHgraph}. This is a graph on the ordered vertex set $1243$ such that there is  an arc $(i,i+1)$ for each descent $\tau(i)>\tau(i+1)$. Also, we draw  an arc $(i,j)$ for each hyperedge $U\in G$ where $i=\min_\tau(U)$ and $j=\max_\tau(U)$ are the minimum and maximum values of $U$ according to the order $\tau$. Then we erase all drawn arcs that contain a nested arc.
With $\tau$ as above, we have the arc $(4,3)$ from the descent of $\tau$ and the arcs $(1,4)$ and $(2,3)$ for the hyperedges  $\{1,2,4\}$ and $\{2,3,4\}$ respectively.  Then we erase the arc $(2,3)$ since it contains the nested arc $(4,3)$. 
The resulting graph is
$$G=G_{1234,x/\epsilon}^{1243,\epsilon}=
\begin{tikzpicture}[baseline=.2cm]
	\foreach \x in {1,2,3,4} 
		\node (\x) at (\x/2,0) [inner sep=-1pt] {$\bullet$};
	\node at (1/2,-.2) {$\scriptstyle 1$};
	\node at (2/2,-.2) {$\scriptstyle 2$};
	\node at (3/2,-.2) {$\scriptstyle 4$};
	\node at (4/2,-.2) {$\scriptstyle 3$};
	\draw (1) .. controls (1.5/2,.75) and (2.5/2,.75) .. (3); 
	\draw [densely dotted]  (2) .. controls (2.5/2,.75) and (3.5/2,.75) .. (4); 
	\draw (3) .. controls (3.25/2,.5) and (3.75/2,.5) .. (4); 
\end{tikzpicture} 
$$
where the dotted arcs correspond to the removed edges. Then we get $$c(G)=c(G|_{124})\cdot c(G|_3)=(1)\cdot(-1)$$
where the first equality comes from Lemma~\ref{le:shortedge} and the second equality follows by Lemma ~\ref{le:fixshape} since the only fixed point adding up to $c(G|_{124})$ is the composition $(1,24)$, which contributes to 1; similarly, the only fixed point adding up to  $c(G|_3)$ is the composition $(3)$ which contributes to $(-1)$.
For most $\tau$ in this example, we get a disconnected graph and Lemma~\ref{le:disconnected} gives us $c(G_{1234,x/\epsilon}^{\tau,\epsilon})=0$ in those cases. For example,
$$G_{1234,x/\epsilon}^{1342,\epsilon}=
\begin{tikzpicture}[baseline=.2cm]
	\foreach \x in {1,2,3,4} 
		\node (\x) at (\x/2,0) [inner sep=-1pt] {$\bullet$};
	\node at (1/2,-.2) {$\scriptstyle 1$};
	\node at (2/2,-.2) {$\scriptstyle 3$};
	\node at (3/2,-.2) {$\scriptstyle 4$};
	\node at (4/2,-.2) {$\scriptstyle 2$};
	\draw [densely dotted](1) .. controls (1.5/2,1) and (3.5/2,1) .. (4); 
	\draw [densely dotted]  (2) .. controls (2.5/2,.75) and (3.5/2,.75) .. (4); 
	\draw (3) .. controls (3.25/2,.5) and (3.75/2,.5) .. (4); 
\end{tikzpicture} 
$$
Removing the dotted arcs produce a disconnected graph, hence the result is zero. The only 
$\tau$ that will contribute non-trivially are $1243, 2341, 3124,4321$ with sign $-1,-1,-1,1$ respectively.
Hence the coefficient of $\epsilon$ in $S(x)$ is $-1-1-1+1=-2$.
\end{example}

\subsection{$c_x^y$ as a signed sum of acyclic orientations of simple hypergraphs.}\label{subsec:acyclic} We now turn to Theorem~\ref{thm:antiH} to get an antipode formula for $c_x^y$ as a signed sum of acyclic orientations of the hypergraph $G_x^y$. If $G_x^y$ is a graph, then we will recover the formula of Humpert-Martin~\cite{Humpert-Martin}. If $G_x^y$ is a more general hypergraph, then the antipode formula may still have cancelation. In fact much more cancelation than in Section~\ref{ss:antiHwithLxH}, but our aim is to
understand this formula geometrically in sequel work (see Section~\ref{ss:nestohedron}). 
Recall that $G_x^y$ is a hypergraph on the vertex set $[m]$ as defined in Equation~\eqref{eq:Gcom}. The ordering of the vertex set depends on a fixed choice of minimal element in ${\mathcal C}_x^y$.

\begin{definition}[Orientation] Given a hypergraph $G$ an {\sl orientation} $(\mathfrak{a},\mathfrak{b})$ of a hyperedge $U\in G$ is a choice of two nonempty subsets $\mathfrak{a},\mathfrak{b}$ of $U$ such that $U=\mathfrak{a}\cup\mathfrak{b}$ and $\mathfrak{a}\cap\mathfrak{b}=\emptyset$. We can think of the orientation of a hyperegde $U$ as current or flow on $U$ from a single vertex of $\mathfrak{a}$ to the vertices in $\mathfrak{b}$ in which case we say that $\mathfrak{a}$ is the {\sl head} of the orientation $\mathfrak{a}\rightarrow\mathfrak{b}$ of $U$. 
 If $|U|=n$, then there are a total of $2^{n}-2$ possible orientations. An  {\sl orientation of $G$} is an orientation of all its hyperedges. Given an orientation ${\mathcal O}$  on $G$, we say that $(\mathfrak{a},\mathfrak{b})\in{\mathcal O}$ if it is the orientation of a hyperedge $U$ in $G$.
\end{definition}

\begin{definition}[Acyclic orientation] Let $G$ be a hypergraph on the vertex set $V$.
Given an orientation $\mathcal O$ of $G$, we construct an oriented graph $G/{\mathcal O}$ as follow. 
We let $V/{\mathcal O}$ be the finest equivalence class of elements of $V$ defined by the heads of $\mathcal O$.
That is the equivalence defined by the transitive closure of the relation $a\sim  a'$ if $a,a'\in \mathfrak{a}$ for some head $\mathfrak{a}$ of $\mathcal O$.
The oriented edge $([a],[b])$ belongs to $G/{\mathcal O}$ for equivalence classes $[a],[b]\in V/{\mathcal O}$  if and only if there is an oriented hyperedge $(\mathfrak{a},\mathfrak{b})$ of $\mathcal O$ such that $a\in \mathfrak{a}$ and $b\in \mathfrak{b}$. An orientation $\mathcal O$ of $G$ is {\sl acyclic} if the oriented graph $G/{\mathcal O}$ has no cycles.
\end{definition}

\begin{example}\label{ex:124_234}
 Let $G=\big\{\{1,2,4\},\{2,3,4\}\big\}$ be a hypergraph on the vertex set $V=\{1,2,3,4\}$. There are $(2^3-2)(2^3-2)=36$ possible orientations of $G$. The orientations ${\mathcal O}=\big\{ (\{4\},\{1,2\}),(\{2,4\},\{3\})\big\}$ is not acyclic. To see this, the set $V/{\mathcal O}=\{\{1\},\{2,4\},\{3\}\}$ and the oriented graph $G/{\mathcal O}$ contain the 1-cycle $([4],[2])$.
Similarly the orientation ${\mathcal O}'=\big\{ (\{4\},\{1,2\}),(\{2,3\},\{4\})\big\}$ is not acyclic. The equivalence set $V/{\mathcal O'}=\{\{1\},\{2,3\},\{4\}\}$ and the oriented graph $G/{\mathcal O'}$ contains the 2-cycle $([4],[2]);([2],[4])$.
The list of all 20 possible acyclic orientation is
$$  \begin{array}{c}
 \scriptstyle
\{ (\{4\},\{1,2\}),(\{4\},\{2,3\})\};\quad \{ (\{4\},\{1,2\}),(\{3\},\{2,4\})\};\quad \{ (\{4\},\{1,2\}),(\{3,4\},\{2\})\};\quad \{ (\{2\},\{1,4\}),(\{3\},\{2,4\})\};\\
 \scriptstyle
 \{ (\{2\},\{1,4\}),(\{2\},\{3,4\})\};\quad \{ (\{2\},\{1,4\}),(\{2,3\},\{4\})\};\quad \{ (\{1\},\{2,4\}),(\{4\},\{2,3\})\};\quad\{ (\{1\},\{2,4\}),(\{3\},\{2,4\})\};\\
\scriptstyle
\{ (\{1\},\{2,4\}),(\{2\},\{3,4\})\};\quad\{ (\{1\},\{2,4\}),(\{2,3\},\{4\})\};\quad \{ (\{1\},\{2,4\}),(\{2,4\},\{3\})\};\quad\{ (\{1\},\{2,4\}),(\{3,4\},\{2\})\};\\
\scriptstyle
\{ (\{1,2\},\{4\}),(\{3\},\{2,4\})\};\quad\{ (\{1,2\},\{4\}),(\{2\},\{3,4\})\};\quad \{ (\{1,2\},\{4\}),(\{2,3\},\{4\})\};\quad\{ (\{1,4\},\{2\}),(\{4\},\{2,3\})\};\\
\scriptstyle
\{ (\{1,4\},\{2\}),(\{3\},\{2,4\})\};\quad\{ (\{1,4\},\{2\}),(\{3,4\},\{2\})\};\quad \{ (\{2,4\},\{1\}),(\{3\},\{2,4\})\};\quad\{ (\{2,4\},\{1\}),(\{2,4\},\{3\})\}.
\end{array}$$
\end{example}

Our next lemma will show that for every set composition $A\in {\mathcal C}_{x}^{y}$ there is a unique acyclic orientation of $G_x^y$. Conversely for any acyclic orientation there is a least one $A=(A_1,A_2,\ldots, A_\ell)\in {\mathcal C}_{x}^{y}$ that gives that orientation. If we denote by ${\mathfrak O}_x^y$ the set of acyclic orientations of $G_x^y$, then we construct bellow a surjective
map $\Omega\colon  {\mathcal C}_{x}^{y} \to {\mathfrak O}_x^y$.
For any  $1\le i\le\ell$, let $A_{i,\ell}=A_i\cup A_{i+1}\cup\cdots\cup A_\ell$ and let $G/\mathcal{O}_{i,\ell}$ be the restriction of  $G/\mathcal{O}$ to the set $A_{i,\ell}$.

\begin{lemma}\label{lem:Omega}
Let $x,y\in {\bf h}[I]$ and consider the hypergraph $G_x^y$ on $V=[m]$. There is a surjective map $\Omega\colon  {\mathcal C}_{x}^{y} \to {\mathfrak O}_x^y$. More precisely,
\begin{enumerate}
 \item[(a)] For any $A=(A_1,A_2,\ldots, A_\ell)\in {\mathcal C}_{x}^{y}$ there is a unique  $\Omega(A)\in {\mathfrak O}_x^y$ such that for any $U\in G_x^y$ the orientation of $U$ is given by $(U\cap A_i,U\setminus A_i)$ where
  $i=\min\{j: A_j\cap U\ne \emptyset\}.$ Furthermore $V/{\Omega(A)}$ is a refinement of $\{A_1,A_2,\ldots,A_\ell\}$.
 \item[(b)] 
 For any  $\mathcal O\in {\mathfrak O}_x^y$, there is a unique  $A_{\mathcal O}=(A_1,A_2,\ldots, A_\ell)\in {\mathcal C}_{x}^{y}$ such that $\{A_1,A_2,\ldots, A_\ell\}=V/{\mathcal O}$ and $A_i$ is the unique source of the restriction  $G/\mathcal{O}_{i,\ell}$  such that $\min(A_i)$ is maximal among the sources of $G/\mathcal{O}_{i,\ell}$.
We have that $\Omega(A_{\mathcal O})=\mathcal O$.
\end{enumerate}
\end{lemma}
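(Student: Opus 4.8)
The plan is to work with the two descriptions of $\mathcal{C}_{x}^{y}$ in parallel: the combinatorial one as set compositions of $[m]$, and the orientation one in terms of the hypergraph $G_x^y$. The bridge I would record first is a reformulation of Lemmas~\ref{le:Hlowerideal} and~\ref{le:Hupperideal}: since $\mathcal{C}_{x}^{y}$ is a lower ideal whose complement has minimal elements obtained by merging a single hyperedge of $G_x^y$ into one block, a set composition $A=(A_1,\dots,A_\ell)$ lies in $\mathcal{C}_{x}^{y}$ if and only if no hyperedge $U\in G_x^y$ is contained in a single part $A_j$; equivalently, every hyperedge of $G_x^y$ meets at least two parts of $A$.

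For part (a), given $A\in\mathcal{C}_{x}^{y}$ I would first check that the prescribed assignment is a genuine orientation. For $U\in G_x^y$ put $i(U)=\min\{j:A_j\cap U\neq\emptyset\}$; then $\mathfrak{a}=U\cap A_{i(U)}$ is nonempty by the choice of $i(U)$, and $\mathfrak{b}=U\setminus A_{i(U)}$ is nonempty because $U$ is not contained in the single part $A_{i(U)}$, by the reformulation above. This defines $\Omega(A)$ uniquely. Since every head $U\cap A_{i(U)}$ lies inside one part of $A$, the generating relation for $V/\Omega(A)$ only identifies vertices sitting in a common part, so $V/\Omega(A)$ refines $\{A_1,\dots,A_\ell\}$; in particular the level $\mathrm{lev}(c):=j$ with $c\subseteq A_j$ is well defined on each class $c\in V/\Omega(A)$. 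Acyclicity then follows from a monovariant: any oriented edge $(c,c')$ of $G/\Omega(A)$ comes from some $U$ with a head vertex $a\in A_{i(U)}\cap c$ and a tail vertex $b\in(U\setminus A_{i(U)})\cap c'$, so $\mathrm{lev}(c)=i(U)$ while $\mathrm{lev}(c')>i(U)$, because $b\in U$ forces its part index to be at least $i(U)$ and $b\notin A_{i(U)}$ rules out equality. Thus $\mathrm{lev}$ strictly increases along every edge, $G/\Omega(A)$ has no directed cycle, and $\Omega(A)\in\mathfrak{O}_x^y$.

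For part (b), I would build $A_{\mathcal{O}}$ as a canonical topological sort of the DAG $G/\mathcal{O}$: its parts are the classes $V/\mathcal{O}$ (a partition of $[m]$, isolated vertices forming singletons), and at each stage the restriction $G/\mathcal{O}_{i,\ell}$ to the still-unused classes is again acyclic, hence has a source; choosing among these sources the one of maximal $\min$ (the minima of distinct classes being distinct) determines $A_i$, and therefore the whole sequence, uniquely. The standard property of this source-removal order is that for every edge $c\to c'$ the class $c$ is removed strictly before $c'$. To see $A_{\mathcal{O}}\in\mathcal{C}_{x}^{y}$, note that for each $U$ with orientation $(\mathfrak{a},\mathfrak{b})\in\mathcal{O}$ the entire head $\mathfrak{a}$ lies in one class, and no $b\in\mathfrak{b}$ can lie in that same class, for otherwise the edge from $\mathfrak{a}$ to $b$ would be a loop and contradict acyclicity (this is exactly the $1$-cycle phenomenon of Example~\ref{ex:124_234}); hence $U$ meets at least two classes and the reformulation gives $A_{\mathcal{O}}\in\mathcal{C}_{x}^{y}$. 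Finally $\Omega(A_{\mathcal{O}})=\mathcal{O}$: letting $c_{\mathfrak{a}}$ be the class of $\mathfrak{a}$, the edges $c_{\mathfrak{a}}\to[b]$ place $c_{\mathfrak{a}}$ strictly before every class meeting $\mathfrak{b}$, so $c_{\mathfrak{a}}$ is the first part of $A_{\mathcal{O}}$ meeting $U$; thus $i(U)$ points to $c_{\mathfrak{a}}$, whence $U\cap A_{i(U)}=\mathfrak{a}$ and $U\setminus A_{i(U)}=\mathfrak{b}$, recovering the orientation of $\mathcal{O}$ on $U$.

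The main obstacle is keeping the two pictures rigidly aligned in direction: one must ensure that \emph{``head lies in the earliest meeting part''} (the rule defining $\Omega$) corresponds precisely to \emph{``source comes first''} in the topological sort, and that acyclicity is exactly the condition ruling out a hyperedge collapsing into a single class. Once the reformulation of $\mathcal{C}_{x}^{y}$ and the level monovariant are in place, these are the only points requiring genuine care; the uniqueness clauses then follow from the tie-breaking rule, and surjectivity is precisely the identity $\Omega(A_{\mathcal{O}})=\mathcal{O}$.
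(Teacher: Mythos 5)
Your proof is correct and follows essentially the same route as the paper: the level function on $V/\Omega(A)$ in part (a) is exactly the paper's map $f\colon V/\mathcal{O}\to\{1,\dots,\ell\}$, and your source-removal argument in part (b) is the paper's claim that $A_j\cap\mathfrak{b}\ne\emptyset$ forces $j>i$. You additionally spell out two points the paper leaves implicit --- the well-definedness of the topological sort (which the paper delegates to a reference) and the explicit verification that $A_{\mathcal O}\in{\mathcal C}_x^y$ via the ``no hyperedge inside a single part'' reformulation --- which only strengthens the write-up.
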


\begin{proof}
   For part (a), let $A=(A_1,A_2,\ldots,A_\ell)\in {\mathcal C}_{x}^{y}$. From Theorem~\ref{thm:hyper}, we have that $A$ must break every hyperedge of $G_x^y$. In particular, for any part $A_i$ of $A$ and $U\in G_x^y$, we always have $A_i\cap U\ne U$.
   Hence $(U\cap A_i,U\setminus A_i)$ for $i=\min\{j: A_j\cap U\ne \emptyset\}$ defines a proper orientation of each edge of $G_x^y$. 
   That is, we have an orientation $\mathcal O$ of $G_x^y$. Remark that by construction, each head $\mathfrak{a}$ of $\mathcal O$ is completely included within a part $A_i$ for a unique part $1\le i\le \ell$. This implies that $V/{\mathcal O}$ refines $\{A_1,\ldots,A_\ell\}$ and it allows us to define a function $f\colon V/{\mathcal O}\to \{1,2,\ldots,\ell\}$ where $f([a])=i$ if and only if $[a]\subseteq A_i$.
   By construction of $\mathcal O$, we have that for any $([a],[b])\in G_x^y/{\mathcal O}$ the function values $f([a])<f([b])$.
   This implies that  $G_x^y/{\mathcal O}$ has no cycle. Hence $\mathcal O$ is acyclic.
   
   For part (b), let $\mathcal O$ be an acyclic orientation on $G_x^y$. It is clear that the set composition $A_{\mathcal O}$ is well defined (for example see \cite{Benedetti-Sagan} for $G/{\mathcal O}$). We need to show that part (a) applied to $A_{\mathcal O}$ gives back $\mathcal O$. We have  that $\{A_1,\ldots, A_\ell\}=V/{\mathcal O}$. Hence for any $(\mathfrak{a},\mathfrak{b})\in {\mathcal O}$ we must have $\mathfrak{a}\subseteq A_i$ for some unique $1\le i\le \ell$. 
   We claim that
 $$A_j\cap   \mathfrak{b}\ne \emptyset\ \quad\implies \quad j>i$$
 If not, then there would be $j<i$ such that $A_j\cap \mathfrak{b}\ne \emptyset$.  This means there is an edge from $A_i$ to $A_j$ in $G/{\mathcal O}_{j,\ell}$, which contradicts the fact that $A_j$ is a source of $G/{\mathcal O}_{j,\ell}$, hence $j$ must be such that $j>i$.
\end{proof}

\begin{theorem} \label{thm:anticoco}
For any $x,y\in {\bf h}[I]$ such that ${\mathcal C}_{x}^{y}\ne\emptyset$ we have
  $$c_x^y=\sum_{{\mathcal O} \in {\mathfrak O}_x^y} (-1)^{\ell{(A_{\mathcal O}})} $$
\end{theorem}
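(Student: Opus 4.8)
The plan is to organize the sum $c_x^y=\sum_{A\in{\mathcal C}_{x}^{y}}(-1)^{\ell(A)}$ along the fibers of the surjection $\Omega\colon{\mathcal C}_{x}^{y}\to{\mathfrak O}_x^y$ furnished by Lemma~\ref{lem:Omega}. Since part~(a) of that lemma assigns to every $A\in{\mathcal C}_{x}^{y}$ a well-defined acyclic orientation $\Omega(A)$, we obtain a disjoint decomposition ${\mathcal C}_{x}^{y}=\biguplus_{{\mathcal O}\in{\mathfrak O}_x^y}\Omega^{-1}({\mathcal O})$, whence
\begin{equation*}
 c_x^y=\sum_{{\mathcal O}\in{\mathfrak O}_x^y}\ \sum_{A\in\Omega^{-1}({\mathcal O})}(-1)^{\ell(A)}.
\end{equation*}
The theorem follows once we show that each inner sum equals $(-1)^{\ell(A_{\mathcal O})}$, where $A_{\mathcal O}$ is the distinguished preimage singled out in part~(b) of Lemma~\ref{lem:Omega}. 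I would prove this by exhibiting, for each fixed ${\mathcal O}$, a sign-reversing involution on the fiber $\Omega^{-1}({\mathcal O})$ whose unique fixed point is $A_{\mathcal O}$.

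The involution $\psi_{\mathcal O}$ is built in the spirit of the involution $\varphi$ of Section~\ref{ss:involution}. Given $A=(A_1,\ldots,A_\ell)\in\Omega^{-1}({\mathcal O})$, recall from part~(a) of Lemma~\ref{lem:Omega} that $V/{\mathcal O}$ refines $\{A_1,\ldots,A_\ell\}$, so each part $A_j$ is a union of classes of $V/{\mathcal O}$. I would scan the parts from left to right and act at the first position where a length-changing move keeps $A$ inside $\Omega^{-1}({\mathcal O})$: either \emph{split} the first part that is a proper union of classes into a leading class (chosen by the canonical source rule) and the remainder, which refines $A$ and hence stays in ${\mathcal C}_{x}^{y}$ by Lemma~\ref{le:Hlowerideal}; or \emph{merge} two consecutive parts $A_j,A_{j+1}$ when the merge neither creates a full hyperedge inside a part nor alters any head of ${\mathcal O}$, so that the result still lies in ${\mathcal C}_{x}^{y}$ and, by the head analysis, still has orientation ${\mathcal O}$. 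As in Lemma~\ref{le:involution}, the split and merge rules are arranged to be mutually inverse at the first active index, producing a genuine involution; each application changes $\ell(A)$ by one and therefore reverses the sign.

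The unique fixed point is $A_{\mathcal O}$: its parts are exactly the classes of $V/{\mathcal O}$, so no split is available, and the greedy ``source with maximal $\min$'' ordering of part~(b) of Lemma~\ref{lem:Omega} is precisely the condition forbidding every merge, since any mergeable pair $A_j,A_{j+1}$ would exhibit $A_{j+1}$ as a competing source of $G/{\mathcal O}_{j,\ell}$, contradicting the maximality rule. Once this is established, each fiber contributes $(-1)^{\ell(A_{\mathcal O})}$ and the theorem is proved. I expect the main obstacle to be the precise formulation of merge/split eligibility so that the operation provably preserves the orientation ${\mathcal O}$—this rests on controlling, for every hyperedge $U\in G_x^y$, the first part of $A$ meeting $U$ together with its induced head $U\cap A_i$—while simultaneously guaranteeing that $A_{\mathcal O}$ is the one and only composition in the fiber admitting no move.
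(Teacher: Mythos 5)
Your global strategy is the paper's: decompose $c_x^y$ over the fibers of the surjection $\Omega$ of Lemma~\ref{lem:Omega} and collapse each fiber to $(-1)^{\ell(A_{\mathcal O})}$ by a sign-reversing involution whose unique fixed point is $A_{\mathcal O}$. The gap is that the involution --- which is the entire content of the proof --- is only described at the level of intent, and the pairing rule you do describe (act at the first position admitting a length-changing move that stays in the fiber) does not work. Concretely, it fails to fix $A_{\mathcal O}$: if two consecutive parts $A_j,A_{j+1}$ of $A_{\mathcal O}$ meet no common hyperedge of $G_x^y$, then merging them swallows no hyperedge and alters no head, so the merged composition lies in the same fiber and your rule moves $A_{\mathcal O}$. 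This situation is generic; e.g.\ for the path $G_x^y=\{\{1,2\},\{2,3\}\}$ with orientation $\{(\{1\},\{2\}),(\{3\},\{2\})\}$ one gets $A_{\mathcal O}=(3,1,2)$, and $(13,2)$ lies in the fiber, so your rule sends both $(3,1,2)$ and $(1,3,2)$ to $(13,2)$ --- not an involution and no fixed point. Your justification that the source-maximality rule of Lemma~\ref{lem:Omega}(b) forbids every merge conflates the existence of a competing source (which merely determines the \emph{order} of the classes) with the impossibility of coarsening, and is not correct.

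The paper resolves exactly this difficulty by keying the involution not to ``first available move'' but to the map $f_B$ sending each part of $A_{\mathcal O}$ to the part of $B$ containing it (well defined since $V/{\mathcal O}$ refines every $B$ in the fiber). One takes the smallest $i$ with $f_B^{-1}(B_i)\ne\{A_i\}$, extracts from the source structure of the restricted oriented graphs $G/{\mathcal O}_{i,\ell}$ a distinguished class $X$, an index $j$, and a subset $U\subseteq f_B^{-1}(B_j)$, then merges $B_{j-1},B_j$ when $U=\emptyset$ and splits $B_j$ along $U$ otherwise; the verification that the data $(i,X,j,U)$ transform into each other under the two moves is what makes the map an involution, and the fixed-point condition is simply $f_B=\mathrm{id}$, i.e.\ $B=A_{\mathcal O}$. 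You would need to supply an argument of this kind (or repair your eligibility conditions so that mergeable-but-reorderable configurations such as the one above are paired among themselves rather than with $A_{\mathcal O}$) before the proof is complete.
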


\begin{proof}
Our proof will be similar to the one appearing in~\cite{Bergeron-Ceballos}. First we use the the surjective map $\Omega$ from Lemma~\ref{lem:Omega}  to decompose the formula~\eqref{eq:cxycocom}
 $$c_x^y=\sum_{B\in {\mathcal C}_{x}^{y}} (-1)^{\ell(B)} = \sum_{{\mathcal O} \in {\mathfrak O}_x^y}\left(\sum_{B\in {\mathcal C}_{x}^{y} \atop \Omega(B)={\mathcal O}} (-1)^{\ell(B)}\right)$$
For any fixed orientation $\mathcal O$, we thus have to show
  $$ \sum_{B\in {\mathcal C}_{x}^{y} \atop \Omega(B)={\mathcal O}} (-1)^{\ell(B)} = (-1)^{\ell(A_{\mathcal O})} $$
  Let ${\mathcal C}_{x,{\mathcal O}}^{y}=\{B \in {\mathcal C}_{x}^{y}: \Omega(B)={\mathcal O}\}$.
As in ~\cite{Benedetti-Sagan,Bergeron-Ceballos}, we construct a signed reversing involution $\varphi\colon {\mathcal C}_{x,{\mathcal O}}^{y}\to {\mathcal C}_{x,{\mathcal O}}^{y}$ such that
  
  (A) $\varphi(A_{\mathcal O})=A_{\mathcal O}$ is the only fixed point,
  
  (B) for $B\ne A_{\mathcal O}$, we have $\ell(\varphi(B))=\ell(B)\pm 1$.
 
 \noindent If $B\ne A_{\mathcal O}$, then we have from Lemma~\ref{lem:Omega} that each part of $A_{\mathcal O}$ is included in a part of $A$. Let $A_{\mathcal O}=(A_1,A_2,\ldots,A_\ell)$ and $B=(B_1,B_2,\ldots,B_k)$, we define
   $$f_B\colon \{A_1,A_2,\ldots,A_\ell\}\to \{ B_1,B_2,\ldots,B_k\}$$
as the  function such that $A_i\subseteq f(A_i)$. Since $B \ne A_{\mathcal O}$, we have that $f_B\ne Id$. 
Find the smallest $i$ such that $f_B^{-1}(B_i)\ne\{A_i\}$. 
Let $G/\mathcal{O}_{i,\ell}$ be the restriction of  $G/\mathcal{O}$ to the set $A_{i,\ell}$.
All the elements in $f^{-1}_B(B_i)$ are sources in the graph $G/\mathcal{O}_{i,\ell}$. 
By Lemma~\ref{lem:Omega} (b), we have that $\min(A_i)$ is the largest among the sources of $G/\mathcal{O}_{i,\ell}$.
Since $f^{-1}_B(B_i)\ne\{A_i\}$, there must be a source  $A_r \in f^{-1}_B(i)$ such that $\min(A_r)<\min(A_i)$. 
Let $X\in f^{-1}_B(B_i)$ be such that $\min(X)<\min(A_r)$ for all $A_r\in f^{-1}_B(B_i)$.
 We then find the smallest $j\ge i$ such that $A_r\in f^{-1}_{B}(B_j)$ is a source of $G/\mathcal{O}_{i,\ell}$ and $\min(A_r)>\min(X)$. 
 Such $j$ exists since $G/\mathcal{O}_{i,\ell}$ contain one source, namely $A_i$, such that $\min(A_i)>\min(X)$.
We let 
  $$U=\Big\{ Z\in f^{-1}_{B}(B_j) : {\exists \text{$Y$ a source of } G/\mathcal{O}_{i,\ell},\text{ a path from $Y$ to $X$}\atop \text{ and } \min(Y)\le \min(X)}\Big\}$$
If $U=\emptyset$, then $j>i$ since $X\not\in U$. 
In this case we remark that our choice of $j$ implies that for all $A_r\in f^{-1}_{B}(B_j)$, the element $A_r$ is connected to a source $Y$ where $\min(Y)\le \min(X)$.
Hence, there is no edge $(Y,X)$ in $G/\mathcal{O}_{i,\ell}$ where $Y\in f^{-1}_{B}(B_{j-1})$ and $X\in f^{-1}_{B}(B_j)$.
 If $U=\emptyset$, then we define
\begin{equation}\label{a_merge}
 \varphi(B)={B}'= (B_1,\ldots,B_{j-2},B_{j-1}\cup B_{j},B_{j+1},\ldots,B_k).
 \end{equation}
It is clear that $ \varphi(B)={B}'\in {\mathcal C}_{x,{\mathcal O}}^{y}$ with $\ell({B'})=\ell({B})-1$. 
Moreover 
  $$f^{-1}_{B'}(B'_r)= \begin{cases}
              f^{-1}_{B}(B_r) &\text{ if $r<j-1$,}\cr 
              f^{-1}_{B}(B_{j-1})\cup f^{-1}_{B}(B_j)&\text{ if $r=j-1$,}\cr 
              f^{-1}_{B}(B_{r+1})&\text{ if $r>j-1$.}\cr
              \end{cases}$$
Repeating the procedure above for ${B}'$ we will obtain $i',X',j',U'$ in such a way that $i'=i$, $X'=X$, $j'=j-1$ and $U'=f^{-1}_{B}(B_{j-1})\ne\emptyset$.
Now we consider the case when $U\ne\emptyset$. 
Reversing what we did, let $U^c=f^{-1}_{B}(B_j)\setminus U$. 
All the $Z\in U$ are connected to a source $Y$ of $G/\mathcal{O}_{i,\ell}$ with value $\min(Y)\le \min(X)$.
There is no edges of $G/\mathcal{O}_{i,\ell}$ between $U$ and $U^c$. Hence
\begin{equation}\label{a_split}
 \varphi({B})={B}'= (B_1,\ldots,B_{j-1},\bigcup_{Z\in U} Z,\bigcup_{Z'\in U^c} Z',B_{j+1},\ldots,B_k).
\end{equation}
 Remark that now  $\ell({B'})=\ell({B})+1$.
 Moreover
  $$f^{-1}_{B'}(B_r)= \begin{cases}
              f^{-1}_{B}(B_r) &\text{ if $r<j-1$,}\cr 
              U&\text{ if $r=j$,}\cr 
              U^c&\text{ if $r=j+1$,}\cr 
              f^{-1}_{B}(B_{r-1})&\text{ if $r>j+1$.}\cr
              \end{cases}$$
and for this ${B}'$ we will obtain $i',X',j',U'$ in such a way that $i'=i$, $X'=X$, $j'=j+1$ and $U'=\emptyset$. The map $ \varphi$ is thus the desired involution.
\end{proof}

\begin{example}
  Let us revisit the example~\ref{ex:twotriangles} in the Hopf monoid of hypergraphs. Let $x=\big\{\{1,2,4\},\{2,3,4\}\big\}$ be a hypergraph on the vertices $\{1,2,3,4\}$.
  A full computation of the antipode gives us 
    $$S(
\begin{tikzpicture}[scale=.7,baseline=.2cm]
	\node (a) at (0,0) {$\scriptstyle 2$};
	\node (b) at (1,.1) {$\scriptstyle 1$};
	\node (d) at (-.5,1) {$\scriptstyle 3$};
	\node (e) at (.6,1.2) {$\scriptstyle 4$};
	\draw [fill=gray] (.1,.1) .. controls (.5,.3) .. (.9,.2) .. controls (.65,.6) .. (.6,1) .. controls (.5,.5) .. (.1,.1) ; 
	\draw [fill=gray] (0,.15) .. controls (-.05,.6) .. (-.35,1) .. controls (.1,.8) .. (.5,1) .. controls (.1,.6) .. (0,.15) ; 
\end{tikzpicture} 
)= - \begin{tikzpicture}[scale=.7,baseline=.2cm]
	\node (a) at (0,0) {$\scriptstyle 2$};
	\node (b) at (1,.1) {$\scriptstyle 1$};
	\node (d) at (-.5,1) {$\scriptstyle 3$};
	\node (e) at (.6,1.2) {$\scriptstyle 4$};
	\draw [fill=gray] (.1,.1) .. controls (.5,.3) .. (.9,.2) .. controls (.65,.6) .. (.6,1) .. controls (.5,.5) .. (.1,.1) ; 
	\draw [fill=gray] (0,.15) .. controls (-.05,.6) .. (-.35,1) .. controls (.1,.8) .. (.5,1) .. controls (.1,.6) .. (0,.15) ; 
\end{tikzpicture} 
+2\  \begin{tikzpicture}[scale=.7,baseline=.2cm]
	\node (a) at (0,0) {$\scriptstyle 2$};
	\node (b) at (1,.1) {$\scriptstyle 1$};
	\node (d) at (-.5,1) {$\scriptstyle 3$};
	\node (e) at (.6,1.2) {$\scriptstyle 4$};
	\draw [fill=gray] (0,.15) .. controls (-.05,.6) .. (-.35,1) .. controls (.1,.8) .. (.5,1) .. controls (.1,.6) .. (0,.15) ; 
\end{tikzpicture} 
+2\ \begin{tikzpicture}[scale=.7,baseline=.2cm]
	\node (a) at (0,0) {$\scriptstyle 2$};
	\node (b) at (1,.1) {$\scriptstyle 1$};
	\node (d) at (-.5,1) {$\scriptstyle 3$};
	\node (e) at (.6,1.2) {$\scriptstyle 4$};
	\draw [fill=gray] (.1,.1) .. controls (.5,.3) .. (.9,.2) .. controls (.65,.6) .. (.6,1) .. controls (.5,.5) .. (.1,.1) ; 
\end{tikzpicture} 
-2\ \begin{tikzpicture}[scale=.7,baseline=.2cm]
	\node (a) at (0,0) {$\scriptstyle 2$};
	\node (b) at (1,.1) {$\scriptstyle 1$};
	\node (d) at (-.5,1) {$\scriptstyle 3$};
	\node (e) at (.6,1.2) {$\scriptstyle 4$};
\end{tikzpicture} 
$$
The coefficient $-2$ in front of the empty hypergraph $\epsilon$ was computed in Example~\ref{ex:twotriangles} using $4!$ permutations.
Here we will do so using Theorem~\ref{thm:antiH} and the 20 acyclic orientations of Example~\ref{ex:124_234}.
Lemma~\ref{lem:Omega}~(b) tells us that each of those orientations is paired with each of the following 20 set compositions (respectively)
$$  \begin{array}{c}
 \scriptstyle
(4,3,2,1);\quad (3,4,2,1);\quad (34,2,1);\quad (3,2,4,1);\\
 \scriptstyle
 (2,4,3,1);\quad(23,4,1);\quad (1,4,3,2);\quad (3,1,4,2);\\
\scriptstyle
(1,2,4,3);\quad (1,23,4);\quad (1,24,3);\quad (1,34,2);\\
\scriptstyle
(3,12,4);\quad (12,4,3);\quad (123,4);\quad (14,3,2);\\
\scriptstyle
(3,14,2);\quad (134,2);\quad (3,24,1);\quad (24,3,1).
\end{array}$$
There are $9$ even length set compositions in this list and $11$ odd length. The coefficient is indeed $9-11=-2$.
For the coefficient of $x$ in $S(x)$, we remark that $x/x$ is a single point with no edges. There is a unique orientation of $x/x$ 
and it is represented by a set composition with a single part. Thus the coefficient is $-1$. For $y=\big\{\{1,2,4\}\big\}$,
$x/y$ is a graph on two vertices, say $u$ and $v$, with a single edge between them and thus it has two acyclic orientations, which correspond to the set compositions $(u,v),\,(v,u)$. Hence the coefficient is $2$. The same argument applies for $y'=\big\{\{2,3,4\}\big\}$.
\end{example}

\section{Some applications with Hopf algebras}\label{s:HopfAlgebras}

In this section, we will consider some examples of antipodes corresponding to some combinatorial Hopf algebras. We recover results from \cite{Humpert-Martin,Aguiar-Ardila,BakerJarvisBergeronThiem, Benedetti-Sagan, BenedettiHallamMachacek}, and derive some new formulas.

\subsection{Antipode in the commutative case $H=\Kcb(\bH)$}\label{ss:anticom} We now consider some commutative and cocommutative Hopf monoid $\bf H$ and look at the antipode of $H=\Kcb(\bH)$.

\begin{example} Consider the Hopf monoid $\bPi$ in Section~\ref{ss:partition} and the basis $\mathbf \pi$. Given a set partition $X\in{\mathbf \pi}[I]$
and any set composition $A\models I$ we have that $X_A=X$ if a permutation of $X$ refines $A$, and $X_A=0$ otherwise. This means that the only term in $S(X)$ is $X$
and its coefficient is $c_X^X$. A minimal $\Lambda$ in ${\mathcal C}_X^X$ is $X$ with some ordering of its parts. The hypergraph $G_X^X$ has $m=|X|$ vertices
and no hyperedges. If we use Theorem~\ref{thm:anticoco}, there is a unique orientation of $G_X^X$ and its sign is $(-1)^m$.

If instead we use Proposition~\ref{lem:Cxydecomp}, we sum over the permutations $\tau$ of $m$ where $G_{12\cdots m,\epsilon}^{\tau,\epsilon}$ has only short edges $(i,i+1)$
for each descent $\tau(i)>\tau(i+1)$ of $\tau$. This graph is disconnected unless $\tau=(m,m-1,\ldots,2,1)$ for which $c(G_{12\cdots m,\epsilon}^{\tau,\epsilon})=(-1)^m$. 

The Hopf algebra $\Kcb(\bPi)$ is the space of symmetric functions $Sym$ and the basis $\Kcb(X)=p_{type(X)}$ is the power sum basis where $type(X)=(|X_1|,|X_2|,\ldots,|X_m|)$ written in decreasing order. This gives the well known antipode formula $S(p_\lambda)=(-1)^{\ell(\lambda)}p_{\lambda}$.
\end{example}

\begin{example} \label{ex:graphdone}
Consider the Hopf monoid $\bG$ from Section~\ref{ss:graph} with basis $\mathbf g$. Given a graph $x\in{\mathbf g}[I]$
and $A\models I$ we have that $x_A=y$ is a subgraph of $x$. In fact, $y$ is a flat of $x$.
A minimal element $\Lambda$ in ${\mathcal C}_x^y$ is given by any ordering of the equivalence relation $I/y$ where $a,b\in I$ are equivalent whenever there is a path in $y$ connecting them. 
The hypergraph $G_x^y$ is the simple graph $x/y$, obtained by contracting $x$ along the edges of $y$. It is a graph on the vertex set $V=I/y$ and edges $\{[a],[b]\}$ whenever $[a]\ne [b]$ and there is an edge $\{a',b'\}$ in $x$ such that $a'\in[a]$ and $b'\in[b]$. Since $G_x^y$ has no hyperedges $U$ such that $|U|>2$, all orientations $\mathcal O$ of $G_x^y$ 
are such 
that $V/{\mathcal O}=V$, since the head of each edge has cardinality 1. Hence in Theorem~\ref{thm:anticoco} we have that $\ell(A_{\mathcal O})=|V|=|I/y|$ for each $\mathcal O$.
No cancelation occurs and we recover the formula of~\cite{Humpert-Martin, Benedetti-Sagan}.
\end{example}

\begin{example} \label{ex:simplicial}
We can extend the previous example to a Hopf monoid $\mathbf{SC}$ of abstract simplicial complexes. A simplicial complex on a set $I$ is a collection $x\in 2^I$ such that
  $$V\in x  \implies U\in x,\;\;\;\; \forall U\subseteq V,\; |U|>1.$$
In this way, simplicial complexes extend the notion of graphs and it is a subfamily of hypergraphs. Now let $\mathbf{sc}[I]$ be the linear span of all simplicial complexes on $I$.
The product and coproduct of $\bf HG$, as defined in~\ref{ss:hypergraph}, restricts well to $\mathbf{SC}$. Hence, $\mathbf{SC}$ is a monoid of abstract simplicial complexes
with basis $\mathbf{sc}$. 

Given a $x\in{\mathbf {sc}}[I]$ and any set composition $A\models I$ we have that $x_A=y$ is a simplicial subcomplex of $x$.
A minimal element $\Lambda$ in ${\mathcal C}_x^y$ is given by any ordering of the equivalence relation $I/y$ where $a,b\in I$ are equivalent whenever there is a path in $y$ connecting them. 
The hypergraph $G_x^y$ is the simple graph given by the 1-skeleton of $x/y$, where $x/y$ is obtained by contracting $x$ along the all hyperedeges of $y$. It is a graph on the vertex set $V=I/y$ and edges $\{[a],[b]\}$ whenever $[a]\ne [b]$ and there is an edge $\{a',b'\}$ in $x$ such that $a'\in[a]$ and $b'\in[b]$. Since $G_x^y$ is a simple graph, all orientations $\mathcal O$ of $G_x^y$ 
are such 
that $V/{\mathcal O}=V$. Hence Theorem~\ref{thm:anticoco} gives $\ell(A_{\mathcal O})=|V|=|I/y|$ for each $\mathcal O$.
No cancelation occurs and we recover the formula of~\cite{BenedettiHallamMachacek}.
\end{example}

\begin{remark} \label{ex:Hyper}
As seen in Examples \ref{ex:graphdone} and \ref{ex:simplicial}. the antipode formula in the Hopf monoid $\bf SC$  is a lifting of the antipode in $\bG$. Thus, it is natural to ask if such a lifting can be done to find an antipode formula in $\bf HG$. This case, however, is more intricate as lots of cancelation occur in Theorem~\ref{thm:anticoco}. Many of these cancelations will be resolved 
in future work (see Section~\ref{ss:nestohedron}).
\end{remark}

\begin{example} \label{ex:Hypertree} (suggested to us by J. Machacek)
Let us consider the family of \emph{hypergraph forests}.
Given a hypergraph $h$ on $I$, we say that $(a_0,a_1,\ldots,a_\ell)$ is a path of $h$ if $\{a_i,a_{i+1}\}\subset U_i\in h$ for each $0\le i<\ell$.
We say that a path $(a_0,a_1,\ldots,a_\ell)$ is \emph{proper} if  all the hyperedges $U_i$ are distinct.
A proper cycle in $h$ is a proper path $(a_0,a_1,\ldots,a_\ell)$ such that $a_0=a_\ell$.
A hypergraph $f$ is a \emph{hyperforest} if it does not contain proper cycles. Let ${\bf hf}[I]$ be the set of hyperforest on $I$.
It is not hard to check that the operations of $\bf HG$ restrict properly in the subset of hyperforests.
Hence we have $\bf HF$ the hopf submonoid of hyperforests of $\bf HG$ with basis $\bf hf$.

Given a $x\in{\mathbf hf}[I]$ and any set composition $A\models I$ we have that $f_A=h$ is a subforest of $f$.
A minimal element $\Lambda$ in ${\mathcal C}_f^h$ is given by any ordering of the equivalence relation $I/h$ where $a,b\in I$ are equivalent whenever there is a path in $h$ connecting them. 
The hypergraph $G_f^h$ is the hyperforest given by $f/h$, the contraction of $f$ along all the hyperedeges of $h$. 
Any two vertices of $G_f^h$ that are connected, will be so via a unique proper path.
Since it is a hyperforest, any orientation of $G_f^h$ is acyclic and will contribute to the computation of the coefficient in Theorem~\ref{thm:anticoco}.
\begin{proposition} Let $k=|G_f^h|$ and $\ell$ be the number of connected component of $G_f^h$:
   $$c_f^h = \begin{cases} 
      (-1)^\ell (-2)^k& \text{if   $\forall U\in G_f^h$ we have $|U|$ is even,}
     \\
     0& \text{otherwise.}
   \end{cases}
     $$
\end{proposition}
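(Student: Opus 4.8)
The plan is to feed the hyperforest $G:=G_f^h$ into Theorem~\ref{thm:anticoco} and then evaluate the resulting signed count by a peeling induction. Because $G$ is a hyperforest, every orientation of $G$ is acyclic, so $\mathfrak O_f^h$ is the set of \emph{all} orientations of $G$; moreover Lemma~\ref{lem:Omega}(b) says the parts of $A_{\mathcal O}$ are exactly the blocks of $V/\mathcal O$, so $\ell(A_{\mathcal O})=|V/\mathcal O|$. Writing $V=[m]$ and
\[
  P(G):=\sum_{\mathcal O}(-1)^{|V/\mathcal O|},
\]
Theorem~\ref{thm:anticoco} gives $c_f^h=P(G)$. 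The first reduction is that $P$ is multiplicative over connected components: every head of $\mathcal O$ sits inside a single hyperedge, hence inside a single component, so no block of $V/\mathcal O$ ever crosses a component boundary and $|V/\mathcal O|=\sum_j|V_j/\mathcal O_j|$; since the orientations of distinct components are independent, $P(G)=\prod_{j=1}^{\ell}P(G_j)$, where $G_1,\dots,G_\ell$ are the components (isolated vertices included). It therefore suffices to compute $P$ on a connected hyperforest, i.e.\ a hypertree.

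For a hypertree I would argue by peeling off a leaf hyperedge. The structural input is that the bipartite incidence graph $B$ on $V$ together with the hyperedges (joining $v$ to $U$ when $v\in U$) is a tree precisely because $G$ is connected and has no proper cycle: a cycle in $B$ is exactly a proper cycle of $G$. Rooting $B$ and choosing a deepest hyperedge-node yields a hyperedge $U$, of size $n=|U|$, all but (at most) one of whose vertices are private to $U$; I call the exceptional vertex $v$ (when $k=1$ every vertex is private and one simply picks an arbitrary $v\in U$). Deleting $U$ and its private vertices $U\setminus\{v\}$ while keeping $v$ produces a connected hypertree $G'$ on $V'=V\setminus(U\setminus\{v\})$ with $k-1$ hyperedges.

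The heart of the argument is the recursion $P(G)=Q_U\cdot P(G')$. Fix an orientation $\mathcal O'$ of $G'$ and an orientation $(\mathfrak a,\mathfrak b)$ of $U$. Since the private vertices lie in no other hyperedge, the blocks of $V/\mathcal O$ are obtained from those of $V'/\mathcal O'$ by adjoining a single block for the head $\mathfrak a$ (which is merged into $v$'s block when $v\in\mathfrak a$, and is otherwise new) together with one singleton for each private vertex lying in the tail $\mathfrak b$. Checking the two cases $v\in\mathfrak a$ and $v\notin\mathfrak a$ shows uniformly that
\[
  |V/\mathcal O|-|V'/\mathcal O'|=n-|\mathfrak a|,
\]
independently of $\mathcal O'$; hence the sum factors and
\[
  Q_U=\sum_{i=1}^{n-1}\binom{n}{i}(-1)^{\,n-i}
     =\Big(\sum_{i=0}^{n}\binom{n}{i}(-1)^i\Big)-1-(-1)^n
     =-\big(1+(-1)^n\big),
\]
so $Q_U=-2$ for $n$ even and $Q_U=0$ for $n$ odd. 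A hypertree with no hyperedge is a single vertex with $P=-1$, so induction on $k$ gives $P(G)=(-1)\prod_{U\in G}Q_U$, which is $(-1)(-2)^{k}$ when all hyperedges of the component have even size and $0$ as soon as one is odd. Multiplying over the $\ell$ components yields $c_f^h=(-1)^{\ell}(-2)^{k}$ when every $U\in G_f^h$ is even, and $c_f^h=0$ otherwise.

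The step I expect to be the main obstacle is the middle one: establishing the existence of a leaf hyperedge via the tree $B$, and then verifying that $|V/\mathcal O|$ drops by exactly $n-|\mathfrak a|$ in every configuration of $v$ and the private vertices relative to $(\mathfrak a,\mathfrak b)$, so that the orientation sum genuinely factors. Once this bookkeeping is secured, the binomial evaluation of $Q_U$ and the multiplicativity of $P$ over components (with isolated vertices supplying the factors $-1$ counted by $\ell$) are routine.
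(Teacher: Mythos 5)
Your argument is correct, but it takes a genuinely different route from the paper's. Both proofs start from Theorem~\ref{thm:anticoco} and the observation that every orientation of a hyperforest is acyclic with $\ell(A_{\mathcal O})=|V/\mathcal O|$ by Lemma~\ref{lem:Omega}, and both ultimately extract the same per-hyperedge factor $-(1+(-1)^{|U|})$. The paper gets there by a sign-reversing involution $\Phi$ on orientations (toggling the maximal vertex of a suitably chosen hyperedge between head and tail), characterizing the fixed points as the orientations in which each hyperedge has its maximum alone on one side, and then pairing those fixed points and \emph{contracting} a hyperedge to obtain the recursion $c_f^h=-(1+(-1)^{|U|})\,c_f^{h\cup U}$; the hyperforest hypothesis enters through the uniqueness of proper paths, which controls how the blocks of $V/\mathcal O$ split and merge under the toggle. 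You instead avoid any involution: you establish multiplicativity over components, use the fact that the bipartite incidence graph of a connected hyperforest is a tree to produce a \emph{leaf} hyperedge $U$ whose non-anchor vertices are private, verify that $|V/\mathcal O|-|V'/\mathcal O'|=|U|-|\mathfrak a|$ uniformly, and evaluate the resulting factor as the elementary binomial sum $\sum_{i=1}^{n-1}\binom{n}{i}(-1)^{n-i}=-(1+(-1)^n)$, so the recursion proceeds by \emph{deletion} of a leaf hyperedge rather than contraction of an arbitrary one. Your version is more self-contained and arguably more transparent (the whole computation reduces to a direct factorization of the orientation sum), at the cost of needing the leaf-hyperedge existence argument; the paper's version needs no special choice of hyperedge and its involution is closer in spirit to the machinery already developed in Section~\ref{subsec:acyclic}, which is presumably why the authors chose it. The one point you flagged as delicate --- the uniform drop $n-|\mathfrak a|$ in both the $v\in\mathfrak a$ and $v\notin\mathfrak a$ cases --- does check out, precisely because the private vertices lie in no other head and so cannot mediate any further merging of blocks.
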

\begin{proof} This result would follow easily from our understanding of the antipode in Section~\ref{ss:nestohedron}. But to be self contained,  we give here a different proof based on a signed reversing involution on acyclic orientation of $G_f^h$. As in Theorem~\ref{thm:anticoco} let ${\mathfrak O}_f^h$ denote the set of acyclic orientation of $G_f^h$. As we remarked above, for a hyperforest, this are all the orientations of $G_f^h$. 

Recall from Section~\ref{subsec:acyclic} that $G_f^h$ is thought of as a hypergraph (here a hyperforest) on $V=[m]$.
We now define 
a signed reversing involution $\Phi\colon {\mathfrak O}_f^h\to {\mathfrak O}_f^h$. 
Given an orientation $\mathcal O$ of $G_f^h$, 
if possible, find the largest element $z\in V$ such that for some $({\frak a},{\frak b})\in {\mathcal O}$ we have $z=\max({\frak a}\cup {\frak b})$ and
\begin{equation}\label{eq:thezcond}
    ( z\in {\frak a}, \quad |{\frak a}|>1 )\quad \text{or} \quad ( z\in {\frak b}, \quad |{\frak b}|>1 ).
    \end{equation}

Then choose $({\frak a},{\frak b})\in {\mathcal O}$ such that ${\frak a}\cup{\frak b}$ is lexicographically maximal among the hyperedges that satisfy~\eqref{eq:thezcond}. We then define  $\Phi({\mathcal O})={\mathcal O}'$ where ${\mathcal O}'$ is obtained from ${\mathcal O}$
after replacing $({\frak a},{\frak b})$ by $({\frak a}\setminus\{z\},{\frak b}\cup\{z\})$ if $z\in \frak a$, or $({\frak a}\cup\{z\},{\frak b}\setminus\{z\})$ otherwise. It is clear that $\Phi$ is an involution that toggles the maximal element of the orientation of a hyperegde between the two situations in~\eqref{eq:thezcond}. If no such $z$ exists, then define $\Phi({\mathcal O})={\mathcal O}$. 

We now show that $\Phi$ reverses sign, except in its fixed points. First recall from Lemma~\ref{lem:Omega} that $\ell(A_{\mathcal O})=|V/{\mathcal O}|$.
Assume $\Phi({\mathcal O})={\mathcal O}'\ne {\mathcal O}$ and let $z$ and $({\frak a},{\frak b})$ be as above. 
In the situation where $z\in {\frak a}$, we now have $({\frak a}\setminus\{z\},{\frak b}\cup\{z\})\in{\mathcal O}'$ and the rest of the orientations are the same as in $\mathcal O$.
Since there exists a unique proper path between any two equivalent vertices in the 
 equivalent classes $[{\frak a}]_{\mathcal O}$ containing $z$ in $V/{\mathcal O}$, this class will break in exactly two classes $[{\frak a}\setminus \{z\}]_{\mathcal O'}$ and 
$[\{z\}]_{\mathcal O'}$ in $V/{\mathcal O'}$. All the other classes of $V/{\mathcal O}$ and $V/{\mathcal O'}$ remain the same. Hence $(-1)^{\ell(A_{\mathcal O})}=-(-1)^{\ell(A_{\mathcal O'})}$
and $\Phi$ is signed reversing in this case. The argument in the other case is similar.

The involution $\Phi$ reduces the identity in Theorem~\ref{thm:anticoco} and we obtain
  $$ c_f^h = \sum_{{\mathcal O}\in {\mathfrak O_f^h} \atop \Phi({\mathcal O})={\mathcal O}} (-1)^{\ell(A_{\mathcal O})}.$$
  To finish the proof, we need to describe now the fixed points of $\Phi$. If there is no $z$ satisfying equation~\eqref{eq:thezcond}, then for all $({\frak a},{\frak b})\in {\mathcal O}$ and  $z=\max({\frak a}\cup {\frak b})$ we have 
  \begin{equation}\label{eq:fixed}
      {\frak a}=\{z\} \quad \text{or} \quad {\frak b}=\{z\},
  \end{equation}
  and the orientations $\mathcal O$ that satisfy~\eqref{eq:fixed} are the fixed points of $\Phi$. If $|G_f^h|=0$, that is  $G_f^h$ has $m=\ell$ vertices and no hyperedges, then $c_f^h=(-1)^\ell$ as desired. If $|G_f^h|>0$, then let $U\in G_f^h$ be any fixed hyperedge. For instance, pick $U$ to be lexicographically maximal in $G_f^h$ and let $z=\max(U)$.
  In any orientation of $G_f^h$  fixed by $\Phi$ we can toggle the orientation of $U$ between the two situations  in~\eqref{eq:fixed} and still get a fixed point of $\Phi$.
  That is, we can pair all the fixed point of $\Phi$ as $({\mathcal O},{\mathcal O}')$ where ${\mathcal O}\ne{\mathcal O}'$ and they differ only by the orientation of $U={\frak c}\cup\{z\}$ with $({\frak c},\{z\})\in {\mathcal O}$ and $(\{z\},{\frak c})\in {\mathcal O'}$. Using again the fact that there is at most a unique proper path between any vertices in $G_f^h$,
  the elements of  ${\frak c}$ are in a single equivalence class in $V/{\mathcal O}$ and in distinct equivalent classes in $V/{\mathcal O'}$. Hence $|V/{\mathcal O'}|=|V/{\mathcal O}|+|U|-2$.
 we now have 
    $$ c_f^h = \sum_{{\mathcal O}\in {\mathfrak O_f^h} \atop \Phi({\mathcal O})={\mathcal O}} (-1)^{\ell(A_{\mathcal O})}
    = \sum_{({\mathcal O},{\mathcal O}')} (-1)^{\ell(A_{\mathcal O})} (1+(-1)^{|U|}).
      $$
Let us denote by $G_f^{h\cup U}$ the hyperforest obtained by contracting the hyperedge $U$ in $G_f^h$. There is a clear correspondence between the orientation ${\mathcal O}$ of $G_f^h$
and the orientation $\mathcal O''$ of $G_f^{h\cup U}$ together with an orientation of $U$. This is true only for hyperforest as there is a unique proper path between any two vertices.
We thus have  
    $$ c_f^h     = \sum_{({\mathcal O},{\mathcal O}')} (-1)^{\ell(A_{\mathcal O})} (1+(-1)^{|U|})
     =- (1+(-1)^{|U|}) \sum_{{\mathcal O''}\in {\mathfrak O_f^{h\cup U}} \atop \Phi({\mathcal O''})={\mathcal O''}} (-1)^{\ell(A_{\mathcal O''})}.
      $$
The negative sign in the second equality comes from the fact that in contracting $U$ joint together the class $[z]_{\mathcal O}$ and $[{\frak c}]_{\mathcal O}$.
  We now have that
    $ c_f^h = -(1+(-1)^{|U|}) c_f^{h\cup U} $. The proposition follows by induction. If $|U|$ is odd, then we get zero. If $|U|$ is even,
  then we get a contribution of $-2$ for that edge and the induction end with an empty hypergraph with the same number of connected component as $G_f^h$.
 \end{proof}
\end{example}

\subsection{Antipode of\, $ \Kcb(\bL\times\bH) \cong \Kc(\bH)$ for linearized $\bH$}\label{s:KLxH} 
 As we noticed in Section~\ref{ss:K_Kbar} we have that $\Kcb(\bL\times\bH) \cong \Kc(\bH)$ for
any Hopf monoid $\bH$. Given $(\alpha,x)\in (\bL\times\bH)[n]_{S_n}$, the isomorphism is explicitly given by the map $(\alpha,x)\mapsto \bH[\alpha^{-1}](x)$ where $\alpha^{-1}\colon[n]\to[n]$
is the unique bijection such that $\alpha^{-1}(\alpha)=12\cdots n$ and $\bH[\alpha^{-1}](x)\in \bH[n]$ is the image of $x$ under the bijection $\bH[\alpha^{-1}]\colon \bH[n]\to\bH[n]$ obtain via the functor $\bH$.
Since $\Kcb$ preserves antipode, in the case where $\bH$ is linearized, Theorem~\ref{thm:antiLxP} gives us the following formula. For $x\in \bH[n]$
 \begin{equation}\label{eq:SforKH}
   S(x)= \sum_{(\beta,y)\in ({\bf l}\times\bh)[n]} c_{12\cdots n,x}^{\beta,y} \bH[\beta^{-1}](y) = \sum_{z\in\bh[n]}\Big( \sum_{\beta\in {\bf l}[n]}  c_{12\cdots n,x}^{\beta, \bH[\beta](z)} \Big) z\,.
 \end{equation}
Here we have identified the linear order $\beta\in{\bf l}[n]$ and the bijection $\beta=(\beta^{-1})^{-1}\colon[n]\to[n]$ in the notation $\bH[\beta](z)$. From Theorem~\ref{thm:antiLxP}
we have that the $c_{12\cdots n,x}^{\beta, \bH[\beta](z)}$ are $\pm 1$, but further cancelation may occur in Equation~\eqref{eq:SforKH}. It is not the best formula in most cases but it is definitely a big improvement on Takeuchi's formula.

\begin{example} \label{ex:super}
Consider the Hopf monoid $\bPi$ from Section~\ref{ss:partition}.  As seen in~\cite{BakerJarvisBergeronThiem}, The Hopf algebra $\Kc(\bPi)$ is the space of symmetric functions
in non-commutative variables. Our formula~\eqref{eq:SforKH} is cancelation free in this case as all the non-zero terms have the same sign (see  Corollary 4.9 of~\cite{BakerJarvisBergeronThiem} for more details).
\end{example}

\begin{example} \label{ex:PR}
Consider now the Hopf monoid $\bL$ in Section~\ref{ss:order}.  The Hopf algebra $PR=\Kc(\bL)$ was introduced by Patras-Reutenauer~\cite{PR:2004} 
and is also studied under the name $R\Pi$ in~\cite{AM:2010}.
The antipode formula~\eqref{eq:SforKH} for $PR$ gives us that for $\alpha\in{\bf l}[n]$:
 \begin{equation}\label{eq:SforKL}
   S(\alpha)=\sum_{\gamma\in{\bf l}[n]}\Big( \sum_{\beta\in {\bf l}[n]}  c_{\epsilon,\alpha}^{\beta, \beta\circ\gamma} \Big) \gamma\,
 \end{equation}
 where $\epsilon=12\ldots n$ is the identity permutation.
In this example, $\beta=(\beta_1,\ldots,\beta_n)\in {\bf l}[n]$ can be encoding three different objects depending on the context.
It is first the total order $\beta_1<\beta_2<\ldots<\beta_n$ on the points $1,2,\ldots,n$.In~\eqref{eq:SforKL}, when we write $\beta\circ\gamma$, we consider
$\beta$ as the permutation defined by $\beta(i)=\beta_i$. Hence $\beta\circ\gamma=(\beta(\gamma_1),\beta(\gamma_2),\ldots,\beta(\gamma_n))$.
Bellow we will consider $\beta$ and $\beta\circ\gamma$ as encoding the set composition $(\{\beta_1\},\ldots,\{\beta_n\})$ and $(\{\beta(\gamma_1)\},\ldots,\{\beta(\gamma_n)\})$.
These conventions should be clear from the context.
We now need a complete description of
$c_{\epsilon,\alpha}^{\beta, \beta\circ\gamma}$ in order to understand (\ref{eq:SforKL}). The set ${\mathcal C}_{\epsilon,\alpha}^{\beta, \beta\circ\gamma}\ne \emptyset$ if and only if the minimal element $\Lambda$ of ${\mathcal C}_{\epsilon,\alpha}^{\beta, \beta\circ\gamma}$ exists and it is the finest set composition such that
\begin{enumerate}
 \item $\beta\le\Lambda$ and $\beta$ is  increasing  with respect to $\epsilon$ within each part of $\Lambda$,
 \item $\beta\circ\gamma\le\Lambda$ and $\beta\circ\gamma$  is  increasing  with respect to $\alpha$ within each part of $\Lambda$.
 \end{enumerate}
These conditions follow from the proof of Lemma~\ref{le:min} in the case of $\bL\times\bL$. 
Let $A=\beta\lor(\beta\circ\gamma)$ be the finest set composition such that $\beta\le A$ and $\beta\circ\gamma\le A$.
We must have that $A\le \Lambda$. Now, if $\beta$ is not increasing with respect to $\epsilon$ within each part of $A$, then it would not be true for $\Lambda$ either. Similarly
if  $\beta\circ\gamma$  is not increasing  with respect to $\alpha$ within each part of $A$, then it would not be true for $\Lambda$. Hence we have that ${\mathcal C}_{\epsilon,\alpha}^{\beta, \beta\circ\gamma}\ne \emptyset$ if and only if $\Lambda=\beta\lor(\beta\circ\gamma)$ is such that $\beta$ is increasing with respect to $\epsilon$ within each part of $\Lambda$ and $\beta\circ\gamma$  is  increasing  with respect to $\alpha$ within each part of $\Lambda$.

For instance, if  $\alpha=(5,2,1,3,4)$, $\beta=(2,1,3,5,4)$ and $\beta\circ\gamma=(2,5,1,3,4)$, then we see that $\Lambda=\beta\lor(\beta\circ\gamma)=(2,135,4)$. The elements $1,3,5$ of $\beta$ are increasing with respect
to $\epsilon=(1,2,3,4,5)$ within the part $135$ of  $\Lambda$. In $\beta\circ\gamma$ these elements are in the order $5,1,3$ which is increasing with respect to $\alpha$. Hence in this little example
 ${\mathcal C}_{\epsilon,\alpha}^{\beta, \beta\circ\gamma}\ne \emptyset$ and $\Lambda=(2,135,4)$ is the minimum. If we take a different $\gamma'$ so that $\beta\circ\gamma'=(2,5,3,1,4)$,
 now $\Lambda=\beta\lor(\beta\circ\gamma')=(2,135,4)$ but the element $5,3,1$ are not in increasing order with respect to $\alpha$, hence ${\mathcal C}_{\epsilon,\alpha}^{\beta, \beta\circ\gamma'}= \emptyset$.

Now we remark that the number of parts of $\Lambda=\beta\lor(\beta\circ\gamma)$ depend only on $\gamma$ and not $\beta$. This follows from the simple fact that
 $$ \beta\lor(\beta\circ\gamma) = \beta\circ\big( \epsilon\lor\gamma\big).$$
 Hence $\ell\big(\beta\lor(\beta\circ\gamma)\big)=\ell\big( \epsilon\lor\gamma\big)=m$ and if ${\mathcal C}_{\epsilon,\alpha}^{\beta, \beta\circ\gamma}\ne \emptyset$, then the graph $G_{\epsilon,\alpha}^{\beta, \beta\circ\gamma}$ is a graph on the vertex set $[m]$ 
 with an edge $(i,i+1)$ if and only if $\max_\epsilon(\Lambda_i)>_\epsilon\min_\epsilon(\Lambda_{i+1})$ or $\max_\alpha(\Lambda_i)>_\alpha \min_\alpha(\Lambda_{i+1})$, using the order $\epsilon$ and $\alpha$ respectively. We remark that $G_{\epsilon,\alpha}^{\beta, \beta\circ\gamma}$ contains only short edges. Hence $c_{\epsilon,\alpha}^{\beta, \beta\circ\gamma}=(-1)^m$ if $(i,i+1)\in G_{\epsilon,\alpha}^{\beta, \beta\circ\gamma}$ for all $1\le i<m$, otherwise the graph is disconnected and $c_{\epsilon,\alpha}^{\beta, \beta\circ\gamma}=0$. We summarize our discussion in the following theorem.
 \begin{theorem}  \label{thm:AntipodePR}
 Given $\alpha\in{\bf l}[n]$, in the Hopf algebra $PR$ we have
     $$S(\alpha)=\sum_{\gamma\in{\bf l}[n]}(-1)^{m}  d_{\alpha,\gamma}\,\, \gamma\,$$
     where $m=\ell\big( \epsilon\lor\gamma\big)$ and  $d_{\alpha,\gamma}$ is the number of  $\beta\in{\bf l}[n]$ such that for $\Lambda=\beta\lor(\beta\circ\gamma)$ we have
\begin{enumerate}
 \item[(i)] $\beta$ is increasing with respect to $\epsilon$ within each part of $\Lambda$,
 \item[(ii)] $\beta\circ\gamma$  is  increasing  with respect to $\alpha$ within each part of $\Lambda$, and
 \item[(iii)]  $\max_\epsilon(\Lambda_i)>_\epsilon\min_\epsilon(\Lambda_{i+1})$ or $\max_\alpha(\Lambda_i)>_\alpha \min_\alpha(\Lambda_{i+1})$ for all $1\le i<m$.
 \end{enumerate}
 \end{theorem}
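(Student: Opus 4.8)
The plan is to evaluate the inner coefficient $\sum_{\beta} c_{\epsilon,\alpha}^{\beta,\beta\circ\gamma}$ appearing in Equation~\eqref{eq:SforKL}, for each fixed $\gamma\in{\bf l}[n]$, and show it equals $(-1)^m d_{\alpha,\gamma}$ with $m=\ell(\epsilon\lor\gamma)$. Since \eqref{eq:SforKL} already expresses $S(\alpha)$ as $\sum_\gamma\big(\sum_\beta c_{\epsilon,\alpha}^{\beta,\beta\circ\gamma}\big)\gamma$, the theorem follows at once after this coefficient is identified. All the structural input has been assembled in the discussion preceding the statement, so the proof is mainly a matter of combining Theorem~\ref{thm:antiLxP} with that analysis.

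First I would fix $\gamma$ and run over $\beta$. For each $\beta$, the preceding discussion shows that ${\mathcal C}_{\epsilon,\alpha}^{\beta,\beta\circ\gamma}\ne\emptyset$ precisely when conditions (i) and (ii) hold, in which case its minimum is $\Lambda=\beta\lor(\beta\circ\gamma)$; when ${\mathcal C}_{\epsilon,\alpha}^{\beta,\beta\circ\gamma}=\emptyset$ the coefficient $c_{\epsilon,\alpha}^{\beta,\beta\circ\gamma}$ is zero and contributes nothing. The crucial observation I would highlight is the identity $\beta\lor(\beta\circ\gamma)=\beta\circ(\epsilon\lor\gamma)$, which forces $\ell(\Lambda)=\ell(\epsilon\lor\gamma)=m$ to be \emph{independent of} $\beta$. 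This is what ultimately allows the factor $(-1)^m$ to be pulled out of the sum over $\beta$, leaving behind only a count.

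Next I would invoke the description of the graph $G:=G_{\epsilon,\alpha}^{\beta,\beta\circ\gamma}$ on the vertex set $[m]$. Because both tensor factors are copies of $\bL$, the restriction/concatenation operations detect a failure already at a single boundary between consecutive parts of $\Lambda$; hence every minimal element of $[\Lambda,(I)]\setminus{\mathcal C}_{\epsilon,\alpha}^{\beta,\beta\circ\gamma}$ (cf. Lemma~\ref{le:upperideal}) is a short edge, and $(i,i+1)\in G$ exactly when $\max_\epsilon(\Lambda_i)>_\epsilon\min_\epsilon(\Lambda_{i+1})$ or $\max_\alpha(\Lambda_i)>_\alpha\min_\alpha(\Lambda_{i+1})$, i.e. exactly when condition (iii) fails to guarantee a clean merge at position $i$. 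With this in hand I would apply Theorem~\ref{thm:antiLxP} through its two computational lemmas: if some short edge $(i,i+1)$ is missing, then no arc crosses the cut between $\{1,\ldots,i\}$ and $\{i+1,\ldots,m\}$ (there are no long edges to do so), so $G$ is disconnected and $c(G)=0$ by Lemma~\ref{le:disconnected}; if all short edges are present, repeated use of Lemma~\ref{le:shortedge} factors $c(G)$ as a product of $m$ single-vertex coefficients, each equal to $-1$, giving $c(G)=(-1)^m$. Thus $c_{\epsilon,\alpha}^{\beta,\beta\circ\gamma}=(-1)^m$ when (i), (ii), (iii) all hold and $0$ otherwise.

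Summing over $\beta$ and using that $m$ does not depend on $\beta$ then yields $\sum_\beta c_{\epsilon,\alpha}^{\beta,\beta\circ\gamma}=(-1)^m\,\#\{\beta:\text{(i),(ii),(iii) hold}\}=(-1)^m d_{\alpha,\gamma}$, and substituting into \eqref{eq:SforKL} gives the stated formula. I expect the only genuinely delicate point to be the verification that $G$ contains \emph{only} short edges and that its edges are governed exactly by (iii); everything else is bookkeeping. This delicacy is really the $\bL\times\bL$ specialization of Lemma~\ref{le:min} and Lemma~\ref{le:upperideal}, since for linear orders a coarsening fails the order condition if and only if it already fails across some adjacent pair of parts, so once that local-to-global reduction is made explicit the rest assembles directly.
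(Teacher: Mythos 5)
Your proposal is correct and follows essentially the same route as the paper: the paper's argument (given as the discussion in Example~\ref{ex:PR} leading up to the theorem) likewise fixes $\gamma$, characterizes ${\mathcal C}_{\epsilon,\alpha}^{\beta,\beta\circ\gamma}\ne\emptyset$ via conditions (i)--(ii) with minimum $\Lambda=\beta\lor(\beta\circ\gamma)=\beta\circ(\epsilon\lor\gamma)$, notes that $G_{\epsilon,\alpha}^{\beta,\beta\circ\gamma}$ has only short edges governed by (iii), and concludes $c_{\epsilon,\alpha}^{\beta,\beta\circ\gamma}\in\{0,(-1)^m\}$ via the disconnected/short-edge lemmas. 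Your extra explicitness about the local-to-global reduction for linear orders and the factorization into single-vertex coefficients is consistent with, and slightly more detailed than, what the paper records.
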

 To our knowledge this theorem is new and it is a cancelation free formula.
\end{example}

\begin{example} For the monoid $\bf G$ with basis $\bf g$ in Section~\ref{ss:graph} the formula~\eqref{eq:SforKH} is  not cancelation free. However we can find another basis $\overline{\bf g}$
 that linearize  $\bf G$ such that the formula~\eqref{eq:SforKH} is cancelation free. More specifically for $x\in {\bf g}[I]$ a connected graph let
   $$\overline{x}=\sum_{\Phi\in{\bf \pi}[I]} (-1)^{|\Phi|-1}\big (|\Phi)-1\big)!\,\, x_\Phi,$$
   where  ${\bf \pi}[I]$ is the set of set partitions of $I$ and
  for $\Phi=\{A_1,A_2,\ldots,A_\ell\}$  we define $x_\Phi=x|_{A_1}x|_{A_2}\cdots x|_{A_\ell}$. The product  $x_\Phi$ is well defined since $\bf G$ is commutative.

   When $x$ is connected, we have that 
   $$\overline{x}=x+(\text{\it terms with more than 2 connected component}).$$ 
   This is not true if $x$ is not connected. 
   We leave to the reader the  exercise of showing that when $x$ is connected, we have
      $$\Delta_{A_1,A_2}(\overline{x})=0$$
      for any non-trivial decomposition $(A_1,A_2)\models I$.\footnote[1]{one needs to show and use the identity ${\displaystyle \sum_{k=0}^{\min(n,m)}}(-1)^{n+m-k-1}(n+m-k-1)! k! {n \choose k}{m \choose k} = 0$}
       That is to say $\overline{x}$ is primitive.
      
       If  $x\in {\bf g}[I]$  is not connected, then it decompose uniquely into connected component $x=x_1x_2\ldots x_m$ where $x_i$ is a connected subgraph on vertices $I_i\subseteq I$.
       Here $\{I_1,\ldots,I_m\}$ is a set partition of $I$. For such $x$, let us define $\overline{x}=\overline{x}_1\overline{x}_2\ldots\overline{x}_m$.
       now we get that
   $$\overline{x}=x+(\text{\it terms with more than $m+1$ connected components}).$$ 
 Hence the set $\{\overline{x}:x\in{\bf l}[I]\}$ form a basis of ${\bf G}[I]$.  In this basis, the multiplication is the same as before but the comultiplication is now
 $$
 \Delta_{A_1,A_2}(\overline{x}) = 
\begin{cases}
{\overline{x}|_{A_1}} \otimes {\overline{x}|_{A_2}}  & \text{if $A_1$ is the union of some of the parts of $\{I_1,\ldots,I_m\}$,} \\
0 & \text{otherwise.}
\end{cases}
$$
With this in hand, we now have a different basis  $\overline{\bf g}$ that linearized  $\bf G$ with a different comultiplication behavior.
With a reasonable amount of work similar to Example~\ref{ex:PR} and~\ref{ex:super}, the reader will find that  formula~\eqref{eq:SforKH} is also cancelation free in this case.
\end{example}
\subsection{Using Antipodes to derive new identities}\label{ss:antiStan} As we have seen in the introduction, any multiplicative morphism $\zeta\colon H\to\field$
gives rise to a combinatorial invariant $\chi=\phi_t\circ \Psi$ on $H$. For $x\in H_n$, the polynomials $\chi_x(t)$ encode combinatorial information about $x$ which depends on our choice of $\zeta$. 
Also, the combinatorial reciprocity $\chi_x(-1)=(\zeta\circ S)(x)$ is easily verified. 
\begin{example} For $H=\Kcb({\bf G})$ and for $x\in{\bf g}[n]$, let $\zeta(x)=1$ if $x$ is discrete, zero otherwise. In this case $\chi_x(t)$ is the chromatic polynomial of the graph $x$. Stanley's  $(-1)$-color theorem follows as
 $\pm \chi_x(-1)$ is the number of acyclic orientation of $x$.
\end{example}

The following example suggest a new venue to explore combinatorial identities using permutations.
\begin{example} Consider the Hopf algebra $PR=\Kc({\bf L})$ as studied in example~\ref{ex:PR} and let $\zeta(x)=1$ if $x=\epsilon$, and zero otherwise.
We have that $\zeta$ is indeed multiplicative. Since $PR$ is cocommutative then $\Psi\colon PR\to QSym$ will in fact be a symmetric function (see~\cite{ABS} for details). 
Here for $\alpha\in{\bf l}[n]$ we have
   $$ \Psi(\alpha)=\sum_{a\models n} c_a(\alpha) M_a, $$
where $a=(a_1,\ldots,a_\ell)\models n$ is an integer composition of $n$, and $c_a(\alpha)$ is the number of ways to decompose $\alpha$ into increasing subsequences of type $a$.
More precisely
  $$c_a(\alpha)=\big|\{ A\models [n] :  \text{for } 1\le i\le \ell,\  |A_i|=a_i \text{ and }\alpha|_{A_i} \text{ is increasing}\}\big|.$$
These numbers are studied in various place in mathematics and computer science. In particular Robinson-Schensted-Knuth(RSK) insertion shows that the coarsest possible $a$ for which $c_a(\alpha)\ne 0$
is a permutation of the shapes obtain via RSK (see \cite{permutation}).

The {\sl chromatic polynomial } $\chi_\alpha(t)$ is then
     $$ \chi_\alpha(t)=\sum_{a\models n} c_a(\alpha) {t \choose \ell(a)}. $$
 This polynomials, when evaluated at $t=m$ count the number of ways to color the entries of  the permutation $\alpha$ with at most $m$ distinct colors such that $\alpha$ restricted to a single color is increasing.
 Using Theorem~\ref{thm:AntipodePR} we get the identity
 \begin{equation}\label{eq:PRidentity}
     \sum_{a\models n} (-1)^{\ell(\alpha)}c_a(\alpha) =\chi_\alpha(-1)=\zeta\circ S(\alpha) = (-1)^n d_{\alpha,\epsilon}.
     \end{equation}
For any $\beta\in{\bf l}[n]$ and $\gamma=\epsilon$ in Theorem~\ref{thm:AntipodePR}, we have $\Lambda=\beta$ and the conditions (i) and (ii) are automatically satisfied.
Hence 
     $$d_{\alpha,\epsilon}=\big|\{\beta\in {\bf l}[n] : \beta_i>\beta_{i+1}\text{ or }\alpha^{-1}(\beta_i)>\alpha^{-1}(\beta_{i+1})\}\big|.$$
     The identity in Equation~\eqref{eq:PRidentity} relate combinatorial invariants for permutation that looks a priory unrelated. We summarize this in the following theorem
\begin{theorem} 
For $\alpha\in{\bf l}[n]$, the chromatic polynomial $\chi_\alpha(t)$ counts the number of ways to color increasing sequences of $\alpha$ with at most $t$ distinct colors.
We have the identity
  $$ \chi_\alpha(-1) = (-1)^n d_{\alpha,\epsilon},$$
  where $d_{\alpha,\epsilon}$ is the number of $\alpha$-decreasing order.
\end{theorem}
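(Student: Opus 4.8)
The plan is to assemble the ingredients already developed in Example~\ref{ex:PR} and in the discussion preceding the statement. There are two claims to verify: the combinatorial meaning of $\chi_\alpha(t)$ as a coloring count, and the reciprocity $\chi_\alpha(-1)=(-1)^n d_{\alpha,\epsilon}$.

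First I would prove the coloring interpretation starting from $\chi_\alpha(t)=\sum_{a\models n} c_a(\alpha){t\choose\ell(a)}$. Grouping the sum by the number of parts $\ell=\ell(a)$ and recalling that $c_a(\alpha)$ counts \emph{ordered} set compositions $(A_1,\ldots,A_\ell)$ with each $\alpha|_{A_i}$ increasing, the inner sum $\sum_{\ell(a)=\ell} c_a(\alpha)$ equals $\ell!\,p_\ell(\alpha)$, where $p_\ell(\alpha)$ is the number of \emph{unordered} set partitions of $[n]$ into $\ell$ blocks, each an increasing subsequence of $\alpha$. Using $\ell!\,{t\choose\ell}=t(t-1)\cdots(t-\ell+1)$ this rewrites $\chi_\alpha(t)$ as $\sum_\ell p_\ell(\alpha)\,t(t-1)\cdots(t-\ell+1)$. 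The right-hand side counts exactly the functions $[n]\to\{1,\ldots,t\}$ whose nonempty color classes form an increasing-block partition: one first chooses such a partition and then assigns distinct colors to its $\ell$ blocks. This establishes the first assertion.

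Next I would obtain the reciprocity exactly as in the graph case of the Introduction. Since $\phi_t\colon QSym\to\field[t]$ is a Hopf morphism, the antipode of $\field[t]$ is $p(t)\mapsto p(-t)$, and $\zeta=(\phi_t\circ\Psi)|_{t=1}$, one gets $\chi_\alpha(-1)=S\circ\phi_t\circ\Psi(\alpha)|_{t=1}=\phi_t\circ\Psi\circ S(\alpha)|_{t=1}=(\zeta\circ S)(\alpha)$. Applying Theorem~\ref{thm:AntipodePR}, namely $S(\alpha)=\sum_{\gamma\in{\bf l}[n]}(-1)^m d_{\alpha,\gamma}\,\gamma$ with $m=\ell(\epsilon\lor\gamma)$, and the character $\zeta$, which vanishes unless $\gamma=\epsilon$, only the term $\gamma=\epsilon$ survives. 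For this term $\epsilon\lor\epsilon=\epsilon=(\{1\},\ldots,\{n\})$ has $n$ parts, so $m=n$, giving $(\zeta\circ S)(\alpha)=(-1)^n d_{\alpha,\epsilon}$ and hence $\chi_\alpha(-1)=(-1)^n d_{\alpha,\epsilon}$.

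Finally I would identify $d_{\alpha,\epsilon}$. Specializing $\gamma=\epsilon$ in Theorem~\ref{thm:AntipodePR}, the minimum $\Lambda=\beta\lor(\beta\circ\epsilon)=\beta$ is the all-singletons composition, so conditions (i) and (ii) hold automatically, while condition (iii) reduces to requiring that for every $1\le i<n$ either $\beta_i>\beta_{i+1}$ or $\alpha^{-1}(\beta_i)>\alpha^{-1}(\beta_{i+1})$; this is precisely the definition of an $\alpha$-decreasing order. Thus $d_{\alpha,\epsilon}$ is the number of such $\beta$, completing the proof. The only genuinely delicate steps are the factorial bookkeeping in the first paragraph and the verification that $m=n$ when $\gamma=\epsilon$; everything else is direct substitution into results already proved.
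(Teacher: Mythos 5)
Your proposal is correct and follows essentially the same route as the paper: the reciprocity $\chi_\alpha(-1)=(\zeta\circ S)(\alpha)$ via the Hopf morphism $\phi_t$, then specializing Theorem~\ref{thm:AntipodePR} at $\gamma=\epsilon$ where $m=\ell(\epsilon\lor\epsilon)=n$ and conditions (i)--(ii) are vacuous. The only addition is that you spell out the factorial bookkeeping $\sum_{\ell(a)=\ell}c_a(\alpha)=\ell!\,p_\ell(\alpha)$ behind the coloring interpretation, which the paper asserts without proof; that computation is correct.
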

\end{example}

\begin{remark} Given $\alpha\in{\bf l}[n]$, one can associate a partial order $P_\alpha$ where $\alpha_i\prec \alpha_j$ if $i<j$ and $\alpha_i>\alpha_j$. 
As in~\cite{stanley} we can construct the incomparable graph $G_\alpha$ associated to $P_\alpha$. 
The symmetric function $\Psi(\alpha)$ above is in fact the Stanley chromatic symmetric function of the Graph $G_\alpha$.
A famous conjecture of Stanley~\cite{stanley} states that $\Psi(\alpha)$ is $e$-positive if  $P_\alpha$ is $({\bf 3}+ {\bf 1})$-avoiding.
In the language of permutations this is equivalent to say that $\alpha$ is $4123$ and $2341$ avoiding~\cite{Atkinson-Sagan}.
From the Hopf structure, one can see that it is natural to describe $e$ positivity in term of avoiding sequences.
What is surprising here is the fact that there should be finitely many and very simple patterns to avoid.
We also point out that here $d_{\alpha,\epsilon}$ also count the number of acyclic orientations of $G_\alpha$.
\end{remark}

The Hopf algebra $PR$ is free and generated by total orders that do not have any global ascent.
The free generators are $\{1,21,321,231,312,\ldots\}$. In the example above, we choose $\zeta$ to be $1$
on the generator $1$ and zero for all other generators. One can construct different $\zeta$'s by choosing any
subset of generators. This would lead to different coloring schemes and new identities with permutations.

\begin{example} Let us consider the case where $\zeta_{21}\colon PR\to\field$ is defined to be $1$ on the (free) generator $21$
and zero on the other. That is $\zeta(x)=1$ if $x=2143\ldots(2n)(2n-1)$, zero otherwise.
This defines a symmetric functions $\Psi_{21}\colon PR\to QSym$. 
Here for $\alpha\in{\bf l}[n]$ we have
   $$ \Psi_{21}(\alpha)=\sum_{a\models n} c'_a(\alpha) M_a, $$
where $a=(2a_1,\ldots,2a_\ell)\models n$ is an integer composition of $n$ with even parts, and $c_a(\alpha)$ is the number of ways to decompose $\alpha$ into $21^*$-subsequences of type $a$.
More precisely
  $$c'_a(\alpha)=\big|\{ A\models [n] :  \text{for } 1\le i\le \ell,\  |A_i|=2a_i \text{ and }st(\alpha|_{A_i})=2143\ldots(2a_i)(2a_i-1)\}\big|.$$
These numbers are new and strange but seem to have interesting properties to study.

The {\sl chromatic} polynomial $\chi^{21}_\alpha(t)$ is then
     $$ \chi^{21}_\alpha(t)=\sum_{a\models n} c'_a(\alpha) {t \choose \ell(a)}. $$
 This polynomials, when evaluated at $t=m$ counts the number of ways to colors the entries of $\alpha$ with at most $m$ distinct colors such that $\alpha$ restricted to a single color is a $21^*$-sequence.
 Using Theorem~\ref{thm:AntipodePR} we get the identity
 \begin{equation}\label{eq:PRidentity21}
     \sum_{a\models n} (-1)^{\ell(\alpha)}c'_a(\alpha) = (-1)^{n/2} d_{\alpha,2143\ldots(2n)(2n-1)}.
     \end{equation}
\end{example}

\begin{remark} 
The symmetric function $\Psi_{21}(\alpha)$ above is very different from the Stanley chromatic symmetric function for graphs.
However, we believe that using the Hopf structure, one can get some natural positivity using pattern avoidance.

\begin{conjecture} There is a finite set of permutations ${\mathcal A}$ such that for $\alpha\in{\bf l}[n]$
  $$(-1)^{n/2}\Psi_{21}(\alpha)(-h_1,-h_2,\ldots)$$
is $h$-positive for any $\alpha$ that is $\mathcal A$-avoiding. So far, our computer evidence suggests that ${\mathcal A}=\emptyset$.
\end{conjecture}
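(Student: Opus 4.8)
The plan is to reduce the stated $h$-positivity to a sign condition on the power-sum coefficients of $\Psi_{21}(\alpha)$, and then to establish that condition by a sign-reversing involution built from the acyclic-orientation machinery of Section~\ref{subsec:acyclic}. The first, purely formal, step is to pin down the operation $f\mapsto f(-h_1,-h_2,\ldots)$: since a symmetric function is uniquely a polynomial in the power sums, this is the graded algebra endomorphism $\theta$ of $Sym$ determined by $\theta(p_k)=-h_k$, so that $\theta(p_\lambda)=(-1)^{\ell(\lambda)}h_\lambda$ for every partition $\lambda$. Because $\{p_\lambda\}$ and $\{h_\lambda\}$ are both bases, writing $\Psi_{21}(\alpha)=\sum_{\lambda\vdash n} g_\lambda\, p_\lambda$ gives at once the complete-homogeneous expansion $\theta(\Psi_{21}(\alpha))=\sum_{\lambda\vdash n}(-1)^{\ell(\lambda)}g_\lambda\, h_\lambda$. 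Thus the conjecture is exactly the assertion that
\[
(-1)^{n/2+\ell(\lambda)}\, g_\lambda \ge 0 \qquad\text{for every } \lambda\vdash n,
\]
with $g_\lambda=[p_\lambda]\,\Psi_{21}(\alpha)$. In particular $\Psi_{21}(\alpha)=0$ unless $n$ is even, since every composition $a$ with $c'_a(\alpha)\neq 0$ has all parts even, so the normalizing factor $(-1)^{n/2}$ is well defined.

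Next I would produce a signed combinatorial formula for $g_\lambda$. Since $PR$ is cocommutative and $\zeta_{21}$ is multiplicative, $\Psi_{21}$ is a Hopf morphism to $Sym$ and the coefficient $g_\lambda$ is governed by $\zeta_{21}$ together with the antipode, exactly as the reciprocity $\chi^{21}_\alpha(-1)=\zeta_{21}\circ S(\alpha)$ already reflects. Concretely, I would extract $z_\lambda g_\lambda=\langle \Psi_{21}(\alpha),p_\lambda\rangle$ (Hall inner product) by iterating the coproduct of $PR$ and applying the explicit antipode of Theorem~\ref{thm:AntipodePR}; this expresses each such inner product as a signed count of the numbers $d_{\alpha,\gamma}$, i.e.\ of $\alpha$-decreasing orders compatible with a set partition of shape $\lambda$. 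Because every block that $\zeta_{21}$ can detect must carry a $21^*$-pattern, all such blocks have \emph{even} size; this is the structural input that should let the global sign $(-1)^{n/2}$ match the block-by-block signs, in the same spirit as the Hyperforest computation of Example~\ref{ex:Hypertree}, where an even hyperedge contributes the factor $-2$ while an odd one kills the coefficient.

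The decisive step is to reinterpret $(-1)^{n/2+\ell(\lambda)}g_\lambda$ through Theorem~\ref{thm:anticoco}, writing it as $\sum_{\mathcal O}(-1)^{\ell(A_{\mathcal O})+n/2+\ell(\lambda)}$ over the acyclic orientations $\mathcal O$ of the hypergraph $G_x^y$ attached to the relevant instance of ${\mathcal C}_x^y$. I would then design a sign-reversing involution on these orientations that cancels each orientation of negative total sign against one of positive sign, leaving only a nonnegative surplus; the involution should toggle the head/tail membership of a distinguished maximal vertex on a chosen even hyperedge, mirroring the map $\Phi$ of Example~\ref{ex:Hypertree} but now also tracking the shape $\lambda$. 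The even cardinality of the $21^*$-blocks is precisely what should guarantee that each toggle changes $\ell(A_{\mathcal O})$ by the correct parity, so that after normalizing by $(-1)^{n/2}$ the surviving fixed points all contribute $+1$.

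The hard part will be controlling the genuine hypergraph cancellations. As the remark following Theorem~\ref{thm:antiH} stresses, once a hyperedge has size greater than two its acyclic orientations no longer all carry the same sign, and Theorem~\ref{thm:anticoco} is far from cancellation-free; the Hyperforest case of Example~\ref{ex:Hypertree} is tractable only because any two connected vertices are joined by a \emph{unique} proper path. For general $\alpha$ the hypergraph need not be a hyperforest, so the clean pairing of fixed points used there can break down, and one must instead show that the residual uncancelled orientations assemble into a manifestly nonnegative $h$-coefficient. Proving that this succeeds \emph{for every} $\alpha$—that the avoidance set $\mathcal A$ is empty—is what keeps the statement a conjecture: the involution would have to be canonical enough to survive arbitrary overlap patterns among the $21^*$-blocks, and the geometric model via the hypergraphical nestohedron of Section~\ref{ss:nestohedron} may be needed to make the positivity visible intrinsically rather than through an ad hoc cancellation.
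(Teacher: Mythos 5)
This statement is a \emph{conjecture} in the paper: the authors give no proof at all, only the remark that computer evidence suggests one may even take ${\mathcal A}=\emptyset$. So there is no argument of theirs to compare yours against, and your proposal must be judged as a standalone attempt --- and as you yourself concede in your final paragraph, it is a strategy outline, not a proof. The decisive object, the sign-reversing involution that is supposed to cancel all negatively-signed acyclic orientations while tracking the shape $\lambda$, is never constructed; you only say it ``should'' mirror the map $\Phi$ of Example~\ref{ex:Hypertree} and then observe that the unique-proper-path property which makes $\Phi$ work for hyperforests fails in general. That unresolved cancellation problem is precisely the open content of the conjecture, so nothing has been proved.

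Beyond the admitted gap, there is a concrete structural error in the route you propose: Theorem~\ref{thm:anticoco} and the hypergraph $G_x^y$ of Equation~\eqref{eq:Gcom} are defined only for a \emph{commutative and cocommutative} linearized Hopf monoid, since the minimal element $\Lambda$ of ${\mathcal C}_x^y$ is well defined only up to permutation of parts (Lemma~\ref{lem:minimalcom}) and the construction uses commutativity throughout. But $\Psi_{21}$ lives on $PR=\Kc(\bL)$, and $\bL$ is \emph{not} commutative; the combinatorics governing antipode coefficients there is the non-nested graph $G_{\alpha,x}^{\beta,y}$ of Section~\ref{ss:involution} and Theorem~\ref{thm:antiLxP}, not acyclic orientations of hypergraphs. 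So your ``decisive step'' invokes a theorem whose hypotheses fail in this setting. Relatedly, Theorem~\ref{thm:AntipodePR} only yields the single evaluation $\chi^{21}_\alpha(-1)=\zeta_{21}\circ S(\alpha)=(-1)^{n/2}d_{\alpha,2143\cdots(2n)(2n-1)}$; extracting the full family of coefficients $g_\lambda$ (whether in the $p$- or $h$-basis --- note also that your reading of the substitution $f\mapsto f(-h_1,-h_2,\ldots)$ as the endomorphism $p_k\mapsto -h_k$ is one of at least two inequivalent interpretations, e.g.\ $h_k\mapsto -h_k$ on the $h$-expansion gives a different sign condition) requires iterating the coproduct in a way you do not carry out. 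The formal first step reducing $h$-positivity to signed coefficient inequalities is fine, but everything after it is either unsupported or rests on machinery that does not apply.
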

\end{remark}

%
%
%
%

\section{Future Work}\label{s:Future}

We now give a preview of our sequel work with J.~Machacek. Here we are interested in the Hopf monoid of hypergraphs as defined in Section~\ref{ss:hypergraph}

\subsection{Hypergraphical Nestohedron and antipode}\label{ss:nestohedron}
Similar to the construction of graphical zonotope, in this section we informally introduce a polytope associated to a hypergraph, the Hypergraphical Nestohedron.
Certain faces on the boundary of this polytope will naturally be labelled by acyclic orientation of the hypergraph. The antipode is then understood
as the signed sum of these special faces. 

\begin{definition} [Hypergraphical Nestohedron]
 Given a hypergraph $G$ on the vertex set $V=[n]$.
  the \emph{Hypergraphical Nestohedron} $P_G$ associated to $G$ is the polytope in ${\mathbb R}^n={\mathbb R}\{e_1,e_2,\ldots,e_n\}$
  defined by the Minkowsk sum
      $$P_G=\sum_{U\in G} {\bf \Delta}_U,$$
      where ${\bf \Delta}_U$ is the simplex given by the convex hull of the points $\{e_i : i\in U\}$.
\end{definition}

The acyclic orientations of $G$ actually label certain {\bf exterior} faces of $P_G$. Hence the coefficient of the discrete hypergraph in $S(G)$
is the homology of the complex labelled by the acyclic orientations of $G$. The other coefficients of $S(G)$ are also encoded in $P_G$.

For example, consider the hypergraph
$G=
\begin{tikzpicture}[scale=.7,baseline=.2cm]
	\node (1) at (0,.1) {$\scriptstyle 1$};
	\node (2) at (0,1) {$\scriptstyle 3$};
	\node (3) at (1,.5) {$\scriptstyle 2$};
	\draw [fill=blue] (.1,.1) .. controls (.25,.5) .. (.15,.9) .. controls (.4,.5) .. (.85,.5) .. controls (.5,.4) .. (.1,.1) ; 
	\red{\draw (.9,.7).. controls (0.6,1) ..(0.2,1);}
\end{tikzpicture} 
$.
We  have 
$ P_G=\blue{{\bf \Delta}_{123}} + \red{{\bf \Delta}_{23} }
$
where \vskip -18pt
$$\blue{{\bf \Delta}_{123}} = \begin{tikzpicture}[scale=2,baseline=.5cm]
	\node (1) at (0.5,1.0) {
	     $\begin{tikzpicture}[scale=.5,baseline=.5cm]
		\node (1) at (0,.1) {$\scriptstyle 1$};
		\node (2) at (0,1.2) {$\scriptstyle 3$};
		\node (3) at (1,.5) {$\scriptstyle 2$};
		\blue{\draw [->] (.1,.1) .. controls (.25,.5) .. (.15,.9) ; 
		\draw [->] (.1,.1) .. controls (.5,.4) .. (.85,.5) ; }
		\end{tikzpicture} $};
	\node (2) at (-.2,-.2) {
	     $\begin{tikzpicture}[scale=.5,baseline=.5cm]
		\node (1) at (0,.1) {$\scriptstyle 1$};
		\node (2) at (0,1.2) {$\scriptstyle 3$};
		\node (3) at (1,.5) {$\scriptstyle 2$};
		\blue{\draw [->] (.85,.5) .. controls (.4,.5) .. (.15,.9) ; 
		\draw [->] (.85,.5) .. controls (.5,.4) .. (.1,.1) ; }
		\end{tikzpicture} $};
	\node (3) at (1.5,-.2) {
	     $\begin{tikzpicture}[scale=.5,baseline=.5cm]
		\node (1) at (0,.1) {$\scriptstyle 1$};
		\node (2) at (0,1.2) {$\scriptstyle 3$};
		\node (3) at (1,.5) {$\scriptstyle 2$};
		\blue{\draw [->] (.15,.9) .. controls (.4,.5) .. (.85,.5) ; 
		\draw [->] (.15,.9) .. controls (.25,.5) .. (.1,.1) ; }
		\end{tikzpicture} $};
	\node at (1.2,.5) {
	     $\begin{tikzpicture}[scale=.5,baseline=.5cm]
		\node (1) at (0,.5) {$\scriptstyle 13$};
		\node (3) at (1,.5) {$\scriptstyle 2$};
		\blue{\draw [->] (.2,.5) -- (.8,.5) ; }
		\end{tikzpicture} $};
	\node at (.1,.5) {
	     $\begin{tikzpicture}[scale=.5,baseline=.5cm]
		\node (1) at (.5,.1) {$\scriptstyle 12$};
		\node (3) at (.1,1) {$\scriptstyle 3$};
		\blue{\draw [->] (.5,.1) -- (.2,.7) ; }
		\end{tikzpicture} $};
	\node at (.5,-.2) {
	     $\begin{tikzpicture}[scale=.5,baseline=.5cm]
		\node (1) at (0,.1) {$\scriptstyle 1$};
		\node (3) at (.5,1) {$\scriptstyle 23$};
		\blue{\draw [->] (.5,.7) --(.1,.2) ; }
		\end{tikzpicture} $};
	\draw [fill=gray!20!white] (0,0) -- (.6,.75) -- (1.2,0) --(0,0) ; 
	\node at (0.6,.3) {$\scriptstyle 123$};
\end{tikzpicture} \qquad
\red{{\bf \Delta}_{23}} = \begin{tikzpicture}[scale=2,baseline=.1cm]
	\node at (-.2,0) {
	     $\begin{tikzpicture}[scale=.5,baseline=.5cm]
		\node (2) at (0,1.2) {$\scriptstyle 3$};
		\node (3) at (1,.5) {$\scriptstyle 2$};
		\red{\draw [->] (.85,.5) .. controls (.55,.8) .. (.15,.9) ; }
		\end{tikzpicture} $};
	\node at (1.5,0) {
	     $\begin{tikzpicture}[scale=.5,baseline=.5cm]
		\node (2) at (0,1.2) {$\scriptstyle 3$};
		\node (3) at (1,.5) {$\scriptstyle 2$};
		\red{\draw [<-] (.85,.5) .. controls (.55,.8) .. (.15,.9) ; }
		\end{tikzpicture} $};
	\draw [thick] (0,0) -- (1.2,0); 
	\node at (.6,.2) {$\scriptstyle 23$};
\end{tikzpicture} 
$$
Here we have drawn the orientation of each edge on the boundary of its corresponding simplex.
The interior of the simplex is labeled by the full set and corresponds to contraction of the edges.
We then  label the faces of the sum $P_G$ as follow. For each face of $P_G$, we consider how it is obtained in the sum 
and this gives us an orientation of a contraction of $G$. We then contract further any cycle of this orientation and 
obtain\footnote{it is not obvious that this is well defined, this is the main part of our future work} an acyclic orientation of a contraction of $G$:  
$$P_G=\begin{tikzpicture}[scale=2,baseline=.5cm]
	\node (1) at (0.5,1.0) {
	     $\begin{tikzpicture}[scale=.5,baseline=.5cm]
		\node (1) at (0,.1) {$\scriptstyle 1$};
		\node (2) at (0,1.2) {$\scriptstyle 3$};
		\node (3) at (1,.5) {$\scriptstyle 2$};
		\blue{\draw [->] (.1,.1) .. controls (.25,.5) .. (.15,.9) ; 
		\draw [->] (.1,.1) .. controls (.5,.4) .. (.85,.5) ; }
		\red{\draw [->] (.85,.5) .. controls (.55,.8) .. (.25,.9) ; }
		\end{tikzpicture} $};
	\node  at (2,1.0) {
	     $\begin{tikzpicture}[scale=.5,baseline=.5cm]
		\node (1) at (0,.1) {$\scriptstyle 1$};
		\node (2) at (0,1.2) {$\scriptstyle 3$};
		\node (3) at (1,.5) {$\scriptstyle 2$};
		\blue{\draw [->] (.1,.1) .. controls (.25,.5) .. (.15,.9) ; 
		\draw [->] (.1,.1) .. controls (.5,.4) .. (.85,.5) ; }
		\red{\draw [<-] (.85,.6) .. controls (.55,.8) .. (.15,.9) ; }
		\end{tikzpicture} $};
	\node (2) at (-.2,-.2) {
	     $\begin{tikzpicture}[scale=.5,baseline=.5cm]
		\node (1) at (0,.1) {$\scriptstyle 1$};
		\node (2) at (0,1.2) {$\scriptstyle 3$};
		\node (3) at (1,.5) {$\scriptstyle 2$};
		\blue{\draw [->] (.85,.5) .. controls (.4,.5) .. (.15,.9) ; 
		\draw [->] (.85,.5) .. controls (.5,.4) .. (.1,.1) ; }
		\red{\draw [->] (.85,.5) .. controls (.55,.8) .. (.25,.9) ; }
		\end{tikzpicture} $};
	\node (3) at (2.7,-.2) {
	     $\begin{tikzpicture}[scale=.5,baseline=.5cm]
		\node (1) at (0,.1) {$\scriptstyle 1$};
		\node (2) at (0,1.2) {$\scriptstyle 3$};
		\node (3) at (1,.5) {$\scriptstyle 2$};
		\blue{\draw [->] (.15,.9) .. controls (.4,.5) .. (.85,.5) ; 
		\draw [->] (.15,.9) .. controls (.25,.5) .. (.1,.1) ; }
		\red{\draw [<-] (.85,.6) .. controls (.55,.8) .. (.15,.9) ; }
		\end{tikzpicture} $};
	\node at (2.4,.5) {
	     $\begin{tikzpicture}[scale=.5,baseline=.5cm]
		\node (1) at (0,.5) {$\scriptstyle 13$};
		\node (3) at (1,.5) {$\scriptstyle 2$};
		\blue{\draw [->] (.2,.5) -- (.8,.5) ; }
		\red{\draw [->] (.2,.5) .. controls (.55,.8) .. (.8,.6) ; }
		\end{tikzpicture} $};
	\node at (.1,.5) {
	     $\begin{tikzpicture}[scale=.5,baseline=.5cm]
		\node (1) at (.5,.1) {$\scriptstyle 12$};
		\node (3) at (.1,1) {$\scriptstyle 3$};
		\blue{\draw [->] (.5,.1) -- (.2,.7) ; }
		\red{\draw [->] (.5,.1) .. controls (.5,.6) .. (.3,.7) ; }
		\end{tikzpicture} $};
	\node at (1.2,-.2) {
	     $\begin{tikzpicture}[scale=.5,baseline=.5cm]
		\node (1) at (0,.1) {$\scriptstyle 1$};
		\node (3) at (.5,1) {$\scriptstyle 23$};
		\blue{\draw [->] (.5,.7) --(.1,.2) ; }
		\end{tikzpicture} $};
	\node at (1.2,1) {
	     $\begin{tikzpicture}[scale=.5,baseline=.5cm]
		\node (1) at (0,.1) {$\scriptstyle 1$};
		\node (3) at (.5,1) {$\scriptstyle 23$};
		\blue{\draw [<-] (.5,.7) --(.1,.2) ; }
		\end{tikzpicture} $};
	\draw [fill=gray!10!white] (0,0) -- (.6,.75) -- (1.8,.75) -- (2.4,0) --(0,0) ; 
	\node at (1.2,.35) {$\scriptstyle 123$};
\end{tikzpicture}
$$
 For example  
$\begin{tikzpicture}[scale=.5,baseline=.3cm]
		\node (1) at (0,.1) {$\scriptstyle 1$};
		\node (2) at (0,1.2) {$\scriptstyle 3$};
		\node (3) at (1,.5) {$\scriptstyle 2$};
		\blue{\draw [->] (.15,.9) .. controls (.4,.5) .. (.85,.5) ; 
		\draw [->] (.15,.9) .. controls (.25,.5) .. (.1,.1) ; }
		\end{tikzpicture} + 
\begin{tikzpicture}[scale=.5,baseline=.3cm]
		\node (2) at (0,1.2) {$\scriptstyle 3$};
		\node (3) at (1,.5) {$\scriptstyle 2$};
		\red{\draw [->] (.85,.5) .. controls (.55,.8) .. (.15,.9) ; }
		\end{tikzpicture}  = 
\begin{tikzpicture}[scale=.5,baseline=.3cm]
		\node (1) at (0,.1) {$\scriptstyle 1$};
		\node (2) at (0,1.2) {$\scriptstyle 3$};
		\node (3) at (1,.5) {$\scriptstyle 2$};
		\blue{\draw [->] (.15,.9) .. controls (.4,.5) .. (.85,.5) ; 
		\draw [->] (.15,.9) .. controls (.25,.5) .. (.1,.1) ; }
		\red{\draw [->] (.85,.5) .. controls (.55,.8) .. (.15,.9) ; }
		\end{tikzpicture}
		\quad\mapsto\quad
\begin{tikzpicture}[scale=.5,baseline=.3cm]
		\node (1) at (0,.1) {$\scriptstyle 1$};
		\node (3) at (.5,1) {$\scriptstyle 23$};
		\blue{\draw [->] (.5,.7) --(.1,.2) ; }
		\end{tikzpicture} 
$ 
is in the face 
$\begin{tikzpicture}[scale=.5,baseline=.3cm]
		\node (1) at (0,.1) {$\scriptstyle 1$};
		\node (3) at (.5,1) {$\scriptstyle 23$};
		\blue{\draw [->] (.5,.7) --(.1,.2) ; }
		\end{tikzpicture} $. 
The reader can check that all points in this face will get the same labeling regardless on how it is obtained.

The antipode for $G$ contains 3 terms. 
  $$S(G) = -
  \begin{tikzpicture}[scale=.7,baseline=.2cm]
	\node (1) at (0,.1) {$\scriptstyle 1$};
	\node (2) at (0,1) {$\scriptstyle 3$};
	\node (3) at (1,.5) {$\scriptstyle 2$};
	\draw [fill=blue] (.1,.1) .. controls (.25,.5) .. (.15,.9) .. controls (.4,.5) .. (.85,.5) .. controls (.5,.4) .. (.1,.1) ; 
	\red{\draw (.9,.7).. controls (0.6,1) ..(0.2,1);}
\end{tikzpicture} 
 + 2\big(\begin{tikzpicture}[scale=.7,baseline=.2cm]
	\node (1) at (0,.1) {$\scriptstyle 1$};
	\node (2) at (0,1) {$\scriptstyle 3$};
	\node (3) at (1,.5) {$\scriptstyle 2$};
		\red{\draw (.9,.7).. controls (0.6,1) ..(0.2,1);}
\end{tikzpicture} \big) - 2 \big(\begin{tikzpicture}[scale=.7,baseline=.2cm]
	\node (1) at (0,.1) {$\scriptstyle 1$};
	\node (2) at (0,1) {$\scriptstyle 3$};
	\node (3) at (1,.5) {$\scriptstyle 2$};
\end{tikzpicture} 
\big)
$$
The coefficient of the discrete graph is the sum of the six acyclic orientations that
corresponding to the three faces on the left and the three faces on the right. We call these exterior faces as no contraction occurs. The total homology is $2$ in this case.
The coefficient $+2$ in $S(G)$ corresponds to the two horizontal faces in the picture (only $\{2,3\}$ is contracted). Finally the coefficient $-1$ corresponds the interior face of the polytope ($\{1,2,3\}$ is contracted).

In our coming work with Machacek, we  will formalize this and show how the hypergraphical nestohedron contains
all the information of the antipode for hypergraphs.

\subsection{Generalization of Shareshian-Wachs quasisymmetric function}\label{ss:non-com}

For a commutative and cocommutative linearized Hopf monoid $\bH$, the projection map ${\bf L}\times \bH\to\bH$ is a morphism of Hopf monoid. The functor $\Kcb$ give us a Hopf morphism
$\Kc(\bH)=\Kcb({\bf L}\times \bH)\to\Kcb(\bH)$. In~\cite{Guay-Paquet}, M. Guay-Paquet  defines a $q$-deformation
of the comultication of ${\bf L}\times {\bf G}$ where $\bf G$ is as in Section~\ref{ss:graph}. Let us denote by 
$({\bf L}\times {\bf G})_q$ this new monoid structure. At $q=1$ it is clear from~\cite{Guay-Paquet} that we obtain the usual
structure on ${\bf L}\times {\bf G}$. Also, the map $\zeta$ defined by  $\zeta(g)= 1$ if $g$ is discrete graph, $0$ otherwise, is multiplicative on  $\Kc(\bH)$, $\Kcb(\bH)$ and $\Kcb({\bf L}\times {\bf G})_q$.
It commutes with the morphism $\Kc(\bH)\to\Kcb(\bH)$ above,
hence at $q=1$ the invariant defined on both $\Kc(\bH)$ and $\Kcb(\bH)$ are the same invariant (the chromatic polynomial) but 
it is $q$-deformed on $\Kcb({\bf L}\times {\bf G})_q$. We will exploit this similarity to define Shareshian-Wachs $q$-deformation
of other combinatorial invariants.

\small
\bibliographystyle{abbrv}  
\bibliography{AntipodeLinHopfMono}

\end{document}